\documentclass{amsart}


\usepackage{amscd,amsmath,amssymb,amsthm,mathrsfs,latexsym}
\usepackage[pdftex]{graphicx}
\usepackage[mathcal]{euscript}
\usepackage[all]{xy}
\usepackage{color}
\usepackage{fancyhdr}
\usepackage{color}
\usepackage{graphicx}
\usepackage{caption}
\usepackage{subcaption}
\usepackage{color,soul} 
\usepackage{hyperref} 
\hypersetup{
 colorlinks=true, 	 
 linktoc=all,     	 
 linkcolor=black,  	 
 citecolor=black,    
 filecolor=magenta,  
 urlcolor=cyan       
}
\usepackage{tikz} 		
\usepackage{tikz-cd}
\usetikzlibrary{matrix,arrows,decorations.pathmorphing,intersections}

\usepackage{pifont} 
\usepackage{fourier-orns} 


\newtheorem{thm}{Theorem}[section]

\newtheorem{lemma}[thm]{Lemma}
\newtheorem{prop}[thm]{Proposition}
\newtheorem{cor}[thm]{Corollary}

\theoremstyle{definition}
\newtheorem{defn}[thm]{Definition}

\newtheorem{ex}[thm]{Example}
\newtheorem*{rmk}{Remark}


\newcommand{\Z}{{\mathbb{Z}}}
\newcommand{\R}{{\mathbb{R}}}
\newcommand{\C}{{\mathbb{C}}}

\newcommand{\Ho}{{\rm Hom}}

\newcommand{\colim}{{{\rm colim}\hspace{.2em}}}
\newcommand{\hocolim}{{{\rm hocolim}\hspace{.2em}}}

\newcommand{\Flag}{{{\rm Flag}}}


\newcommand*{\longhookrightarrow}{\ensuremath{\lhook\joinrel\relbar\joinrel\relbar\joinrel\rightarrow}}
\newcommand*{\longontoright}{\ensuremath{\relbar\joinrel\relbar\joinrel\twoheadrightarrow}}


\makeatletter
\makeatother

\def\ds{\displaystyle}
\numberwithin{equation}{section} 









\title{On the fundamental group of certain polyhedral products}

\author{Mentor Stafa}
\address{Department of Mathematics, Tulane University, New Orleans, LA 70118}
\email{mstafa@tulane.edu}

\subjclass[2010]{Primary 55A05, 55U10, 55Q05.}

\keywords{polyhedral product, Eilenberg-Mac Lane space, transitively commutative group}

\thanks{The author was supported by DARPA grant number N66001-11-1-4132}
\thanks{This paper has been accepted for publication in the Journal of Pure and Applied Algebra.}


\begin{document}

\begin{abstract}
Let $K$ be a finite simplicial complex, and $(X,A)$ be a pair of spaces. The purpose of this article is to study the fundamental group of the polyhedral product denoted $Z_K(X,A)$, which denotes the moment-angle complex of Buchstaber-Panov in the case $(X,A) = (D^2, S^1)$, with extension to arbitrary pairs in \cite{cohen.macs} as given in Definition \ref{mac.defn} here.

For the case of a discrete group $G$, we give necessary and sufficient conditions on the abstract simplicial complex $K$ such that the polyhedral product denoted by $Z_K(\underline{BG})$ is an Eilenberg-Mac Lane space. The fundamental group of $Z_K(\underline{BG})$ is shown to depend only on the 1-skeleton of $K$. Further special examples of polyhedral products are also investigated.

Finally, we use polyhedral products to study an extension problem related to transitively commutative groups, which are given in Definition \ref{defn: TC group}.
\end{abstract}

\maketitle

\tableofcontents

\section{Introduction}

\

Let $(X,A)$ be a $CW$-pair with non-degenerate basepoint, and let $K$ be an abstract simplicial complex with a finite number of vertices. The \textit{polyhedral product functor} is a covariant functor defined on the category of $CW$-pairs with values in the category of topological spaces. That is, for any pair  $(X,A)$ and abstract finite simplicial complex $K$, the polyhedral product $Z_K(X,A)$ is a topological space with the natural choice of basepoint. This functor has been studied extensively and stands at the foundations of \textit{toric topology}. See for example work of V. Buchstaber and T. Panov \cite{buch.panov,buch.panov.toric.topology}, work of A. Bahri, M. Bendersky, F. Cohen and S. Gitler \cite{cohen.macs}, and elegant work of G. Denham and A. Suciu \cite{denham}.

\

One of the fundamental examples is the Davis-Januszkiewicz space 
$$\mathcal{DJ}(K) \simeq ET^m \times_{T^m} Z_K(D^2,S^1) \simeq Z_K(\C P^{\infty},\ast).$$
The cohomology ring of the space $Z_K(\C P^{\infty},\ast)$ is isomorphic as an algebra to the face ring of $K$, also known as the Stanley-Reisner ring. The equivalence between $\mathcal{DJ}(K)$ and $Z_K(\C P^{\infty},\ast)$ was proved by V. Buchstaber and T. Panov \cite{buch.panov}. The polyhedral product $Z_K(D^2,S^1)$ is called \textit{the moment-angle complex}, and is sometimes abbreviated $Z_K$ in the literature.

\

Seminal work of A. Bahri, M. Bendersky, F. Cohen and S. Gitler \cite{cohen.macs} set the solid framework necessary to study polyhedral products in broad generality. In particular, they study the stable decomposition of these spaces for any sequence of $CW$-pairs $(\underline{X},\underline{A})=\{(X_i,A_i)\}_{i=1}^n $. They prove a stable decomposition which has implications for computations in any cohomology theory. Namely, they prove that for a sequence $(\underline{X},\underline{A})$ of connected and pointed $CW$-pairs there is a pointed homotopy equivalence 
$$\Sigma( Z_K (\underline{X},\underline{A})) \to \Sigma ( \bigvee_{I \subseteq \{1,\dots,n\}}\widehat{Z}_{K_I} (\underline{X_I},\underline{A_I})),$$
where the ``hat'' stands for smash product (see Definition \ref{defn: smash polyhedral product}) and 
$$K_I=\{\sigma \cap I | \sigma \in K\}$$ 
is the restriction of $K$ to $I$ and $(\underline{X_I},\underline{A_I})$ is the restriction of the sequence of pairs to $I$, see \cite[Theorem 2.10]{cohen.macs}.

\

In this paper we study the homotopy groups of the polyhedral products $Z_K(\underline{X},\underline{A})$ with a focus on the fundamental group for specific choices of $CW$-pairs $(\underline{X},\underline{A})$.

\

Let $G$ be a group with the discrete topology and let $BG$  denote its classifying space with $\ast=B\{1\}$ as the basepoint. If $G$ is not discrete, then $BG$ might have non-trivial higher homotopy groups. Let $EG$ be a contractible topological space on which $G$ acts freely and properly discontinuously. If $G_1,\dots,G_n$ are discrete groups, then we show that the existence of the higher homotopy groups of the polyhedral product $Z_K(\underline{BG})$ depends only on $K$. In particular, if $K$ is a \textit{flag complex} (see Definition \ref{defn: flag cx}), then higher homotopy groups of $Z_K(\underline{BG})$ vanish, see \cite[Example 1.6]{davis.okun}. Here we show that the condition that $K$ is a flag complex is necessary.

\begin{thm}\label{thm: Z_K is an E-M space iff K flag INTRO}
Let $G_1,\dots , G_n$ be non-trivial discrete groups and $K$ be a simplicial complex with $n$ vertices. Then  $Z_K({\underline{BG}})$ is an Eilenberg-Mac Lane space if and only if $K$ is a flag complex. Equivalently, $Z_K(\underline{EG},\underline{G})$ is an Eilenberg-Mac Lane space if and only if $K$ is a flag complex.
\end{thm}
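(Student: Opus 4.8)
The plan is to prove both directions by analyzing the universal cover of $Z_K(\underline{BG})$ and the homotopy colimit description of polyhedral products. First I would recall that $Z_K(\underline{BG})$ is the homotopy colimit over the face category of $K$ of the diagram sending a simplex $\sigma$ to the product $\prod_{i \in \sigma} BG_i$ (with the full product $\prod_{i=1}^n BG_i$ at the "empty" level coming from the functoriality of the polyhedral product in each coordinate). Since each $BG_i = K(G_i,1)$ is aspherical and products of aspherical spaces are aspherical, every space in the diagram is a $K(\pi,1)$; the question is whether the homotopy colimit remains aspherical. The fundamental group $\pi_1(Z_K(\underline{BG}))$ is the graph product of the $G_i$ along the 1-skeleton of $K$ (this is essentially the statement that $\pi_1$ depends only on $K^{(1)}$, asserted in the abstract and presumably established earlier), so I would want to compare $Z_K(\underline{BG})$ with the standard $K(\pi,1)$ for that graph product.

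For the "if" direction (flag implies Eilenberg--Mac Lane): when $K$ is flag, the graph product group $\pi = \pi_1(Z_K(\underline{BG}))$ has a well-known aspherical classifying space built exactly as a polyhedral-type construction — this is precisely the content of \cite[Example 1.6]{davis.okun}, which I would cite. Concretely, one shows the natural map $Z_K(\underline{BG}) \to Z_{K'}(\underline{BG})$, where $K'$ is the flag complex (clique complex) on $K^{(1)}$, is a homotopy equivalence when $K = K'$ is already flag, and that $Z_{K'}(\underline{BG})$ is aspherical because its universal cover is a CAT(0)-type polyhedral product (or, combinatorially, because the homotopy colimit over the clique complex of aspherical pieces glued along aspherical intersections stays aspherical — a Mayer--Vietoris/van Kampen induction on the number of maximal simplices using that all intersections $\prod_{i \in \sigma \cap \tau} BG_i$ are aspherical and $\pi_1$-injective). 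I would present this as following directly from the cited result once we know $\pi_1$ only sees $K^{(1)}$.

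For the "only if" direction (Eilenberg--Mac Lane implies flag), which is the genuinely new content and the main obstacle: suppose $K$ is not flag, so there is a set of vertices $\{i_0,\dots,i_k\}$, $k \geq 2$, that is not a simplex of $K$ but whose every proper subset is. Restricting to this vertex set via the functor $K \mapsto K_I$ and using the stable splitting (or better, a direct retraction argument), I would exhibit a subspace or retract of $Z_K(\underline{BG})$ equivalent to $Z_{\partial\Delta^k}(\underline{BG_I})$, the polyhedral product over the boundary of a simplex. The key computation is then that $Z_{\partial\Delta^k}(\underline{BG_I})$ is \emph{not} aspherical: since $\partial \Delta^k$ has the same $1$-skeleton as $\Delta^k$, its fundamental group is the full direct product $G_{i_0} \times \cdots \times G_{i_k}$, whose $K(\pi,1)$ is $\prod BG_{i_j} = Z_{\Delta^k}(\underline{BG_I})$; but the natural map $Z_{\partial\Delta^k}(\underline{BG_I}) \to Z_{\Delta^k}(\underline{BG_I})$ cannot be a homotopy equivalence because on homology (via the Bahri--Bendersky--Cohen--Gitler stable decomposition, or a direct Mayer--Vietoris computation) the smash term $\widehat{Z}_{\partial\Delta^k}(\underline{BG_I})$ contributes a nontrivial higher-degree class — essentially $\widetilde H_*(\partial \Delta^k) = \widetilde H_{k-1}(S^{k-1})$ forces a nonzero $\pi_1$-trivial homology class in degree $\geq 2$, detecting a nonzero higher homotopy group. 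The care needed is to ensure that this non-asphericity of the restricted polyhedral product actually obstructs asphericity of the total space $Z_K(\underline{BG})$; this follows because $Z_{K_I}(\underline{BG_I})$ is a retract of $Z_K(\underline{BG})$ up to homotopy (the polyhedral product is natural in $K$ for inclusions of full subcomplexes, which admit retractions $K \to K_I$ collapsing the other vertices to the basepoint), and a retract of an aspherical space is aspherical — so a non-aspherical retract forces the whole space to be non-aspherical.

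I expect the main obstacle to be making the "only if" direction airtight: specifically, verifying that for a non-flag $K$ one really does get a homotopy retract onto a polyhedral product over $\partial \Delta^k$ for a minimal non-face, and then cleanly certifying that $Z_{\partial\Delta^k}(\underline{BG})$ has a nonvanishing higher homotopy group (equivalently, that its universal cover has nontrivial reduced homology) — for which I would use that the universal cover of $Z_{\partial\Delta^k}(\underline{BG})$ is $Z_{\partial\Delta^k}(\underline{EG},\underline{G})$-like, a polyhedral product over $\partial\Delta^k$ of contractible-versus-discrete pairs, whose reduced homology is computed by the stable decomposition to be a sum of shifted smash products of the discrete sets $G_i$ indexed by the nontrivial reduced homology of $\partial\Delta^k$, hence nonzero in degree $k \geq 2$.
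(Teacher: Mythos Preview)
Your proposal is correct. For the ``if'' direction you do exactly what the paper does: cite \cite[Example~1.6]{davis.okun}. For the ``only if'' direction, however, the paper takes a much shorter and more elementary route. Rather than retracting $Z_K(\underline{BG})$ onto $Z_{\partial\Delta^k}(\underline{BG_I})$ and invoking the BBCG stable splitting to detect higher homology in the universal cover, the paper first passes via the Denham--Suciu fibration to the equivalent statement for $Z_K(\underline{EG},\underline{G})$ (Lemma~\ref{htpy.gps.Zk}(2)), and then simply observes that the inclusion of pairs $([0,1],\{0,1\}) \hookrightarrow (EG_i,G_i)$ --- available precisely because each $G_i$ is non-trivial discrete --- yields an embedding $Z_{\sigma}([0,1],\{0,1\}) \hookrightarrow Z_K(\underline{EG},\underline{G})$ for any minimal non-face $\sigma$ on $k\geq 3$ vertices, and $Z_{\sigma}([0,1],\{0,1\}) = \partial([0,1]^k) \cong S^{k-1}$. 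No stable splitting, no comparison of classifying spaces, no BBCG is needed. Your approach is more systematic --- the retraction onto a full subcomplex together with the identification $\widehat{Z}_{K_I}(\underline{CA},\underline{A})\simeq |K_I|\wedge \widehat{\underline{A}}^I$ would adapt to pairs other than $(EG,G)$ --- whereas the paper's argument is a two-line trick specific to pairs of the form (contractible, non-trivial discrete). Your explicit concern about making the retraction step airtight is well placed: an embedded sphere alone does not force non-trivial higher homotopy, and it is exactly the retraction of pairs $(EG_i,G_i)\to([0,1],\{0,1\})$ together with the projection onto the $I$-coordinates that certifies the paper's $S^{k-1}$ is homotopically essential --- a point the paper leaves implicit.
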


Let $SK_1$ denote the 1-skeleton of the simplicial complex $K$. In \cite{davis.okun} it was shown that if $K$ is a flag complex, then the fundamental group of the polyhedral product $Z_K(\underline{BG})$ is the \textit{graph product} of the groups $G_1,\dots,G_n$ over the graph $SK_1$ (see Definition \ref{defn: graph product of gps}), that is,
$$\pi_1(Z_K(\underline{BG})) \cong \prod_{SK_1}G_i.$$
Here we show that this is the case for any abstract simplicial complex $K$ on a finite number of vertices. That is, the fundamental group in this case is determined by the 1-skeleton of $K$. The higher homotopy groups may appear only when $K$ contains $m$-dimensional faces for $m \geq 2$.

\begin{prop}\label{prop: fun.gp.Zk INTRO}
Let $K$ be a simplicial complex on $n$ vertices and let $SK_1$ be the 1-skeleton of $K$. Let $G_1,\dots,G_n$ be discrete groups. Then
\[\pi_1 (Z_K(\underline{BG})) \cong \pi_1 (Z_{SK_1}(\underline{BG})) \cong \prod_{SK_1} G_i.\]
\end{prop}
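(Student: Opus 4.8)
The plan is to deduce both isomorphisms from a single monotonicity statement: if $L' \subseteq L$ are finite simplicial complexes with the same $1$-skeleton, then the inclusion $Z_{L'}(\underline{BG}) \hookrightarrow Z_L(\underline{BG})$ induces an isomorphism on $\pi_1$. Granting this, I observe that every face of $K$ is a clique of the graph $SK_1$, hence a face of the flag complex $\Flag(SK_1)$, so that
\[ SK_1 \subseteq K \subseteq \Flag(SK_1), \]
and all three complexes have $1$-skeleton $SK_1$. Applying the monotonicity statement to the first inclusion yields $\pi_1(Z_{SK_1}(\underline{BG})) \cong \pi_1(Z_K(\underline{BG}))$, the first isomorphism in the proposition. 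Applying it to $SK_1 \subseteq \Flag(SK_1)$ yields $\pi_1(Z_{SK_1}(\underline{BG})) \cong \pi_1(Z_{\Flag(SK_1)}(\underline{BG}))$; since $\Flag(SK_1)$ is a flag complex, \cite{davis.okun} identifies the right-hand side with the graph product of the $G_i$ over its $1$-skeleton, namely $SK_1$, giving $\pi_1(Z_{SK_1}(\underline{BG})) \cong \prod_{SK_1} G_i$ (the graph product depending only on the underlying graph).

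For the monotonicity statement I would use the standard model of each $BG_i$ with a single $0$-cell, so that $D(\sigma) := \prod_{i \in \sigma} BG_i$ (the remaining coordinates being basepoints) is a CW complex whose cells are products $\prod_{i \in \sigma} e_i$ of cells of the factors, and $D(\sigma) \cap D(\tau) = D(\sigma \cap \tau)$. Writing $Z_L(\underline{BG}) = \colim_{\sigma \in L} D(\sigma)$, I enumerate the faces of $L \setminus L'$ in order of non-decreasing dimension; each has dimension $\geq 2$, and each of its proper faces lies either in $L'$ or earlier in the enumeration. Attaching these faces one at a time, the step that adjoins $\sigma$ replaces the space $Y$ built so far by $Y \cup_{W_\sigma} D(\sigma)$, where $Y \cap D(\sigma) = W_\sigma = \bigcup_{\tau \subsetneq \sigma} D(\tau)$ is the generalized (``fat'') wedge inside $\prod_{i \in \sigma} BG_i$. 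Since $\prod_{i \in \sigma} BG_i$ is obtained from $W_\sigma$ by attaching precisely the product cells $\prod_{i \in \sigma} e_i$ with every $e_i$ of positive dimension — cells of total dimension $\geq |\sigma| \geq 3$ — the space $Y \cup_{W_\sigma} D(\sigma)$ is obtained from $Y$ by attaching cells of dimension $\geq 3$.

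Iterating over all faces of $L \setminus L'$ shows that $Z_L(\underline{BG})$ is obtained from $Z_{L'}(\underline{BG})$ by attaching cells of dimension $\geq 3$; equivalently the pair $(Z_L(\underline{BG}), Z_{L'}(\underline{BG}))$ is $2$-connected, so van Kampen's theorem gives the desired $\pi_1$-isomorphism. The point requiring the most care is the combinatorial bookkeeping for the gluing — verifying at each stage that $D(\sigma)$ meets the already-assembled subspace in exactly the fat wedge $W_\sigma$ (this uses $D(\sigma) \cap D(\tau) = D(\sigma \cap \tau)$ together with the fact that the proper faces of $\sigma$ have strictly smaller dimension, hence are already present) and that passing to one-vertex models of the $BG_i$ is harmless. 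Everything else reduces to the standard fact that attaching cells in dimensions $\geq 3$ does not change the fundamental group.
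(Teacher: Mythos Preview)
Your argument is correct and shares the paper's central mechanism --- passing from $SK_1$ to $K$ only attaches cells of dimension $\geq 3$, hence leaves $\pi_1$ unchanged --- but you package it differently in two respects. First, the paper routes the cell-attachment argument through the Denham--Suciu fibration, proving the $\pi_1$-isomorphism on the $(\underline{EG},\underline{G})$ side and then transporting it to $(\underline{BG},\underline{\ast})$ via the comparison of short exact sequences; you work directly with $(\underline{BG},\underline{\ast})$, which is more economical and avoids the fibration entirely. Second, for the identification with the graph product the paper asserts a direct Seifert--van Kampen computation on $Z_{SK_1}(\underline{BG})$, while you instead observe $SK_1\subseteq\Flag(SK_1)$, apply your monotonicity to reach $Z_{\Flag(SK_1)}(\underline{BG})$, and then invoke Davis--Okun for flag complexes. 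Your route is slightly less self-contained (it imports the Davis--Okun $K(\pi,1)$ result rather than computing $\pi_1$ by hand) but it neatly ties the statement to the flag-complex story developed elsewhere in the paper, and it makes the sandwich $SK_1\subseteq K\subseteq\Flag(SK_1)$ do all the work. The bookkeeping you flag --- that $D(\sigma)$ meets the already-assembled space exactly in the fat wedge $W_\sigma$ --- is handled correctly by your dimension-ordering of the attached faces together with $D(\sigma)\cap D(\tau)=D(\sigma\cap\tau)$.
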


The fundamental group of the polyhedral products $Z_K({X})$ is also investigated for a 1-connected space $X$. Let $CY$ denote the cone on the topological space $Y$. Then the following theorem holds.

\begin{thm}\label{thm: pp is 1-connected INTRO}
Let $X_1,\dots,X_n$ be 1-connected $CW$-complexes. Then the polyhedral product 
$Z_K(\underline{C \Omega X},\underline{\Omega X})$ is 1-connected.
\end{thm}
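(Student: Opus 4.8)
The plan is to compute $\pi_1\bigl(Z_K(\underline{C\Omega X},\underline{\Omega X})\bigr)$ by a Seifert--van Kampen induction over the faces of $K$, using two elementary observations: each cone $C\Omega X_i$ is contractible, and each loop space $\Omega X_i$ is path-connected since $\pi_0(\Omega X_i)\cong\pi_1(X_i)=0$. Writing $D(\sigma)=\prod_{i\in\sigma}C\Omega X_i\times\prod_{i\notin\sigma}\Omega X_i$ for the sub-product indexed by a face $\sigma$, the polyhedral product is the union of the $D(\sigma)$, each path-connected and containing the basepoint, so $Z_K(\underline{C\Omega X},\underline{\Omega X})$ is path-connected and it suffices to kill $\pi_1$. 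Adjoining the faces of $K$ in order of increasing dimension, the space is built from $Z_{SK_0}(\underline{C\Omega X},\underline{\Omega X})$ --- the polyhedral product over the $0$-skeleton $SK_0$, i.e.\ the $n$ isolated vertices --- by a finite sequence of homotopy pushouts (the inclusions are cofibrations because each $\Omega X_i\hookrightarrow C\Omega X_i$ is), one for each face $\sigma$ with $\dim\sigma\geq 1$: at that step $D(\sigma)$ is glued onto the space built so far along $D(\sigma)\cap(\text{previous stage})=Z_{\partial\Delta_\sigma}(\underline{C\Omega X},\underline{\Omega X})\times\prod_{i\notin\sigma}\Omega X_i$, where $\partial\Delta_\sigma$ is the boundary of the full simplex on the vertices of $\sigma$.

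The key input is the lemma that for path-connected spaces $Y_0,\dots,Y_d$ the polyhedral product $Z_{\partial\Delta^d}(\underline{CY},\underline{Y})$ is homotopy equivalent to the iterated join $Y_0*\cdots*Y_d$. I would prove this by induction on $d$: for $d=1$ it is the standard model $Y_0*Y_1=(CY_0\times Y_1)\cup(Y_0\times CY_1)$, and the inductive step writes $Z_{\partial\Delta^d}(\underline{CY},\underline{Y})$ as a homotopy pushout which, after contracting cone factors, is $Z_{\partial\Delta^{d-1}}(\underline{CY},\underline{Y})\xleftarrow{\mathrm{pr}}Z_{\partial\Delta^{d-1}}(\underline{CY},\underline{Y})\times Y_d\xrightarrow{\mathrm{pr}}Y_d$, and the homotopy pushout of two projections $A\leftarrow A\times B\rightarrow B$ is the join $A*B$. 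Hence $Z_{\partial\Delta^d}(\underline{C\Omega X},\underline{\Omega X})$ is $2d$-connected, in particular simply connected once $d\geq 1$. Therefore at each pushout above, the inclusion of the gluing space into $D(\sigma)$ induces an isomorphism on fundamental groups (both equal $\prod_{i\notin\sigma}\pi_1(\Omega X_i)$), so by van Kampen the pushout does not change $\pi_1$; running over all faces of dimension $\geq 1$ gives $\pi_1\bigl(Z_K(\underline{C\Omega X},\underline{\Omega X})\bigr)\cong\pi_1\bigl(Z_{SK_0}(\underline{C\Omega X},\underline{\Omega X})\bigr)$, so in particular the fundamental group depends only on the $1$-skeleton of $K$, in the spirit of Proposition~\ref{prop: fun.gp.Zk INTRO}.

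It remains to prove $\pi_1\bigl(Z_{SK_0}(\underline{C\Omega X},\underline{\Omega X})\bigr)=1$, which I expect to be the crux. When $n=1$ this space is the contractible cone $C\Omega X_1$. For $n\geq 2$ one covers $Z_{SK_0}(\underline{C\Omega X},\underline{\Omega X})$ by the boxes $P_i=C\Omega X_i\times\prod_{j\neq i}\Omega X_j$; since the vertices $\{i\}$ and $\{j\}$ meet in the empty face, every pairwise and triple intersection of the $P_i$ equals $\prod_{j=1}^n\Omega X_j$, so the (suitably thickened) cover satisfies the hypotheses of van Kampen. The inclusion $\prod_j\Omega X_j\hookrightarrow P_i$ kills the $i$-th factor on $\pi_1$; hence the relation identifying the two images of $\pi_1\bigl(\prod_j\Omega X_j\bigr)$ in $\pi_1(P_i)$ and $\pi_1(P_k)$ forces the $\pi_1(\Omega X_i)$-part of $\pi_1(P_k)$ to be trivial for all $k\neq i$, and running over all pairs this kills every generator of every $\pi_1(P_i)$. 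As these images generate $\pi_1\bigl(Z_{SK_0}(\underline{C\Omega X},\underline{\Omega X})\bigr)$, that group is trivial. Together with path-connectedness, this shows $Z_K(\underline{C\Omega X},\underline{\Omega X})$ is $1$-connected.

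I expect only the base case $SK_0$ and the join lemma for $Z_{\partial\Delta^d}$ to require genuine work, the remainder being bookkeeping with homotopy pushouts. A shorter but less self-contained route is to invoke the homotopy fibration $Z_K(\underline{C\Omega X},\underline{\Omega X})\to Z_K(\underline X,\underline{\ast})\to\prod_{i=1}^n X_i$, obtained by pulling back the product of the based path fibrations $PX_i\to X_i$ along the inclusion $Z_K(\underline X,\underline{\ast})\hookrightarrow\prod_i X_i$: the space $Z_K(\underline X,\underline{\ast})$ is $1$-connected (being a union of simply connected products of the $X_i$ with connected intersections), and $\pi_2\bigl(Z_K(\underline X,\underline{\ast})\bigr)\to\pi_2\bigl(\prod_i X_i\bigr)$ is onto because $K$ contains every vertex, so $Z_K(\underline X,\underline{\ast})$ contains each coordinate copy of $X_i$; the long exact sequence of the fibration then forces $\pi_1$ of the fibre to vanish.
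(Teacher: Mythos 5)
Your argument is correct, and its main engine is the same as the paper's: an iterated Seifert--van Kampen computation over the pieces $D(\sigma)$ of the colimit, exploiting that each $D(\sigma)$ deformation retracts onto $\prod_{i\notin\sigma}\Omega X_i$ and that the amalgamating maps $\pi_1(D(\sigma\cap\tau))\to\pi_1(D(\sigma))$ are coordinate projections, so that amalgamation kills every coordinate once every vertex lies in some face. The organization differs, though. The paper (after replacing $(C\Omega X,\Omega X)$ by $(EG,G)$ with $G$ path-connected) absorbs the simplices one at a time and tracks that at each stage the fundamental group is the product of the $\pi_1(G_i)$ over the vertices not yet covered, which is trivial at the end. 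You instead isolate the crux in the $0$-skeleton --- where your pairwise-projection argument is exactly the paper's mechanism --- and then show that attaching a face $\sigma$ of dimension at least $1$ cannot change $\pi_1$, because the gluing locus $Z_{\partial\Delta_\sigma}(\underline{C\Omega X},\underline{\Omega X})\times\prod_{i\notin\sigma}\Omega X_i$ has simply connected first factor by the join lemma $Z_{\partial\Delta^d}(\underline{CY},\underline{Y})\simeq Y_0\ast\cdots\ast Y_d$. This costs you an extra (standard, correctly proved) lemma but buys a sharper statement, namely the analogue of Proposition~\ref{prop: fun.gp.Zk INTRO} for these pairs: the fundamental group is already computed by $SK_0$. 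Your closing fibration route is genuinely different from the paper's proof and is arguably the cleanest of the three: simple connectivity of $Z_K(\underline{X},\underline{\ast})$ by van Kampen on simply connected pieces, surjectivity of $\pi_2(Z_K(\underline{X},\underline{\ast}))\to\pi_2(\prod_i X_i)$ via the coordinate axes, and the long exact sequence of the Denham--Suciu fibration. One small point in your favour: the paper asserts that the maps induced by the inclusions of the intersections are monomorphisms on $\pi_1$, which is a slip (they are the projections killing the coordinates in $\tau\setminus\sigma$); your write-up identifies these maps correctly, and the correct identification is what makes the amalgamation collapse.
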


Since the polyhedral product $Z_K(\underline{C \Omega X},\underline{\Omega X})$ is 1-connected, one can describe the first non-vanishing homotopy group using the Hurewicz isomorphism. For this choice of pairs of spaces the first non-vanishing homotopy group depends on the structure of the simplicial complex $K$ as follows.

Let $I=\{i_1,\dots,i_k\}\subset \{1,\dots,n\}$ and for a sequence of spaces given by $\underline{X}=\{X_1,\dots,X_n\}$ let $\underline{X}^I=X_{i_1} \wedge \cdots \wedge X_{i_k}$. Then the first non-vanishing homotopy group of the polyhedral product $Z_K(\underline{C \Omega X},\underline{\Omega X})$ is given by the following proposition.

\begin{prop}\label{prop: 2nd homotopy group, special case INTRO}
Let $X_1,\dots,X_n$ be 1-connected $CW$-complexes and $K$ be a simplicial complex with $n$ vertices. If $Z_K(\underline{C \Omega X},\underline{\Omega X})$ is $(k-1)$-connected, then the first non-vanishing homotopy group is given by 
$$\pi_k( Z_K(\underline{C \Omega X},\underline{\Omega X})) \cong \bigoplus_{I \subset \{1,\dots,n\}}H_k(|K_I| \wedge \widehat{\underline{\Omega X}}^I; \Z).$$
\end{prop}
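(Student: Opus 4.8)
The plan is to combine the stable splitting of Bahri--Bendersky--Cohen--Gitler with the Hurewicz theorem, exploiting the fact that suspension is an isomorphism on homology. First I would recall that, by hypothesis, $Z_K(\underline{C\Omega X},\underline{\Omega X})$ is $(k-1)$-connected (this is where Theorem~\ref{thm: pp is 1-connected INTRO} enters when $k=2$, and is part of the hypothesis in general), so the Hurewicz theorem gives
\[
\pi_k\bigl(Z_K(\underline{C\Omega X},\underline{\Omega X})\bigr)\;\cong\;H_k\bigl(Z_K(\underline{C\Omega X},\underline{\Omega X});\Z\bigr).
\]
Thus the problem is reduced to a purely homological computation, and homology is insensitive to suspension up to a degree shift: $H_k(Y;\Z)\cong \widetilde H_{k+1}(\Sigma Y;\Z)$ for connected $Y$.

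Next I would apply the stable decomposition \cite[Theorem 2.10]{cohen.macs} to the pair sequence $(\underline{C\Omega X},\underline{\Omega X})$, which yields a pointed homotopy equivalence
\[
\Sigma\bigl(Z_K(\underline{C\Omega X},\underline{\Omega X})\bigr)\;\simeq\;\Sigma\Bigl(\bigvee_{I\subseteq\{1,\dots,n\}}\widehat{Z}_{K_I}(\underline{C\Omega X}_I,\underline{\Omega X}_I)\Bigr).
\]
Taking reduced homology and using that $\widetilde H_*$ of a wedge is the direct sum, together with the suspension isomorphism, gives
\[
H_k\bigl(Z_K(\underline{C\Omega X},\underline{\Omega X});\Z\bigr)\;\cong\;\bigoplus_{I\subseteq\{1,\dots,n\}}\widetilde H_k\bigl(\widehat{Z}_{K_I}(\underline{C\Omega X}_I,\underline{\Omega X}_I);\Z\bigr).
\]
The remaining step is to identify each smash polyhedral product $\widehat{Z}_{K_I}(\underline{C\Omega X}_I,\underline{\Omega X}_I)$. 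Since $C\Omega X_i$ is contractible and $(C\Omega X_i,\Omega X_i)$ is a cofibration pair, the smash polyhedral product on a pair of the form (cone, subspace) collapses: one should show $\widehat{Z}_{K_I}(\underline{C\Omega X}_I,\underline{\Omega X}_I)\simeq |K_I|\wedge \widehat{\underline{\Omega X}}^I$ up to suspension, i.e. the geometric realization of $K_I$ smashed with the full smash $\widehat{\underline{\Omega X}}^I=\Omega X_{i_1}\wedge\cdots\wedge\Omega X_{i_k}$ over $I$. This is the standard computation of the smash polyhedral product of a cone pair (it appears in \cite{cohen.macs} and in Porter's work on the fat wedge); intuitively, contracting each cone coordinate turns the polyhedral product into the join-type construction governed by the nerve $|K_I|$.

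Assembling these identifications, and noting that $|K_I|$ is empty (hence contributes nothing) unless $I$ spans at least an edge, but in all cases the formula $\widetilde H_k(|K_I|\wedge\widehat{\underline{\Omega X}}^I;\Z)=H_k(|K_I|\wedge\widehat{\underline{\Omega X}}^I;\Z)$ holds with the convention that $H_k$ of a based space means reduced homology, we obtain
\[
\pi_k\bigl(Z_K(\underline{C\Omega X},\underline{\Omega X})\bigr)\;\cong\;\bigoplus_{I\subset\{1,\dots,n\}}H_k\bigl(|K_I|\wedge\widehat{\underline{\Omega X}}^I;\Z\bigr),
\]
as desired. The main obstacle I anticipate is the last step: pinning down precisely, with correct basepoint and connectivity bookkeeping, that the smash polyhedral product of the cone pair $(\underline{C\Omega X}_I,\underline{\Omega X}_I)$ is homotopy equivalent (stably, which is all we need) to $|K_I|\wedge\widehat{\underline{\Omega X}}^I$; one must be careful that $\Omega X_i$ is connected (guaranteed by $X_i$ being 1-connected) so that all the reduced-homology/wedge/suspension manipulations are valid and no low-dimensional correction terms appear.
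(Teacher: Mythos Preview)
Your proposal is correct and follows essentially the same route as the paper: apply Hurewicz to reduce to homology, invoke the stable splitting \cite[Theorem~2.10]{cohen.macs} after one suspension, and then identify each smash polyhedral product $\widehat{Z}_{K_I}(\underline{C\Omega X}_I,\underline{\Omega X}_I)\simeq |K_I|\wedge\widehat{\underline{\Omega X}}^I$ using the contractibility of $C\Omega X_i$. The step you flag as the main obstacle is exactly \cite[Theorem~2.19]{cohen.macs}, which gives that identification on the nose (not merely stably), so no additional bookkeeping is needed there.
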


Now assume that the fundamental group of $X$ and $Y$ for the pairs $(X,A)$ and $(Y,A)$ are isomorphic and there are inclusions $A\subset X \subset Y$. Using the colimit definition of polyhedral products and the fact that homotopy colimit and colimit commute in this case, one can restrict to a subspace if necessary, to compute the fundamental group of the polyhedral product.

\begin{lemma}\label{lemma: f. group of subspace of poly. prod. INTRO}
If $A \subset X \subset Y $ with $A$ discrete, $X$ and $Y$ path-connected, such that the inclusion induces an isomorphism  of the fundamental groups of $X$ and $Y$, then there is an isomorphism
\[\pi_1(Z_K(X,A)) \cong \pi_1(Z_K(Y,A)).\]
\end{lemma}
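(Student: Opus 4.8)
The plan is to use the colimit description of the polyhedral product together with the compatibility of $\pi_1$ with the relevant colimits. Recall that $Z_K(X,A) = \colim_{\sigma \in K} (X,A)^{\sigma}$, where for a face $\sigma$ the space $(X,A)^{\sigma}$ is the product of $n$ factors, with $X$ in the slots indexed by $\sigma$ and $A$ in the remaining slots, and the colimit is taken over the face poset of $K$ (including the empty face). Since $A$ is discrete and $X$, $Y$ are path-connected, the relevant homotopy colimit agrees with the colimit here (this is exactly the point invoked in the statement), so it suffices to compare $\pi_1$ of the two cubical diagrams $D_X\colon \sigma \mapsto (X,A)^{\sigma}$ and $D_Y\colon \sigma \mapsto (Y,A)^{\sigma}$.

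First I would apply $\pi_1$ to each diagram. The inclusion $A \subset X \subset Y$ induces a map of diagrams $D_X \to D_Y$, hence a map $Z_K(X,A) \to Z_K(Y,A)$. For each fixed face $\sigma$, the space $(X,A)^{\sigma}$ is a finite product whose factors are copies of $X$ (on $\sigma$) and points of the discrete set $A$ (off $\sigma$); since $A$ is discrete, each connected component of $(X,A)^{\sigma}$ is a product of copies of $X$, and $\pi_1$ of the basepoint component is $\prod_{i\in\sigma}\pi_1(X_i)$ — using the hypothesis $A$ discrete so the off-$\sigma$ factors contribute nothing, and using that $\pi_1$ of a product is the product of the $\pi_1$'s. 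The same computation for $Y$ gives $\prod_{i\in\sigma}\pi_1(Y_i)$, and by hypothesis the inclusion $X\hookrightarrow Y$ induces an isomorphism $\pi_1(X)\xrightarrow{\cong}\pi_1(Y)$, so the map of $\pi_1$-diagrams $\pi_1 D_X \to \pi_1 D_Y$ is a levelwise isomorphism, hence an isomorphism of diagrams of groups.

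Next I would pass to colimits. Because $K$ always contains all its vertices (and the empty face), the diagram is connected and the colimit of the polyhedral-product diagram computes the fundamental group: concretely, one uses the van Kampen theorem for the open cover of $Z_K(X,A)$ by the (thickened) pieces $(X,A)^{\sigma}$, so that $\pi_1(Z_K(X,A)) \cong \colim_{\sigma\in K}\pi_1((X,A)^{\sigma})$ as a colimit of groups, and likewise for $Y$. Since colimits of groups are functorial and the map $\pi_1 D_X \to \pi_1 D_Y$ is an isomorphism of diagrams, the induced map on colimits $\pi_1(Z_K(X,A)) \to \pi_1(Z_K(Y,A))$ is an isomorphism. (Alternatively, one can cite Proposition~\ref{prop: fun.gp.Zk INTRO}-type reasoning: $\pi_1$ depends only on the $1$-skeleton and is the graph product of the $\pi_1(X_i)$ over $SK_1$, and graph products of isomorphic groups over the same graph are isomorphic.)

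The main obstacle is making the identification $\pi_1(Z_K(X,A)) \cong \colim_{\sigma\in K}\pi_1((X,A)^{\sigma})$ precise — i.e. checking that van Kampen applies to this particular cover (the pieces are not open, so one must thicken them, and verify that intersections $(X,A)^{\sigma}\cap(X,A)^{\tau} = (X,A)^{\sigma\cap\tau}$ remain path-connected, which again uses that $A$ is discrete and $X$, $Y$ are path-connected so that the basepoint components are what matter) and that $\pi_1$ genuinely commutes with the colimit over the face poset. Once that is in hand, the levelwise isomorphism established above upgrades to the desired isomorphism of fundamental groups, functorially in the inclusion $X\hookrightarrow Y$.
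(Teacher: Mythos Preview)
Your proposal is correct and is precisely the alternative argument the paper sketches \emph{after} its main proof: use $Z_K(X,A)\simeq \hocolim_{\sigma\in K} D_X(\sigma)$ together with Farjoun's result that $\pi_1(\hocolim_\sigma D_X(\sigma))\cong \colim_\sigma \pi_1(D_X(\sigma))$, and then observe that $D_X(\sigma)\hookrightarrow D_Y(\sigma)$ is a $\pi_1$-isomorphism for every $\sigma$.

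The paper's primary proof takes a different, more elementary route. It avoids the hocolim machinery entirely and instead argues via $2$-skeleta: since $A$ is discrete, attaching a face $\sigma$ with $|\sigma|\geq 3$ to $SK_1$ only adds cells of dimension $\geq 3$ to the polyhedral product, so the $2$-skeleton of $Z_K(X,A)$ already sits inside $Z_{SK_1}(X_2,A)$ and the inclusion is a $\pi_1$-isomorphism; the same holds for $Y$, and one compares the two $1$-skeletal polyhedral products directly. This buys a proof that does not rely on the general $\pi_1$-of-hocolim theorem. Your approach is cleaner conceptually but leans on that theorem; in particular, the ``main obstacle'' you flag is genuine --- when $|A|>1$ the intersections $D(\sigma)\cap D(\tau)=D(\sigma\cap\tau)$ can be disconnected (e.g.\ $D(\varnothing)=A^n$), so the classical van Kampen theorem does not apply on the nose, and this is exactly why the paper cites Farjoun rather than thickening and checking connectivity by hand.
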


Suppose that $H$ is a closed subgroup of the Lie group $G$ and consider the pair $(BG,BH)$. There is an inclusion $G/H \hookrightarrow C(G/H)$ which gives an inclusion 
$$EG\times G/H \hookrightarrow EG \times C (G/H).$$
So we can regard $G/H$ as a $G$-equivariant subspace of $EG$. The following is an analogue of the Denham-Suciu fibration.

\begin{prop}\label{prop: analogue of Denham-Suciu INTRO}
Let $H$ be a closed subgroup of the Lie group $G$. There is a fibration given by
\[ Z_K(EG,G/H) \longrightarrow Z_K(BG,BH)  \longrightarrow (BG)^n .\]
\end{prop}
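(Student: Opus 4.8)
The plan is to present $Z_K(\underline{BG},\underline{BH})\to(BG)^n$ as the projection of a Borel fibration whose fibre is $Z_K(\underline{EG},\underline{G/H})$, in direct analogy with the Buchstaber--Panov fibration $Z_K(D^2,S^1)\to\mathcal{DJ}(K)\to BT^n$ and the Denham--Suciu fibration \cite{denham}. Following the paragraph preceding the statement, for each of the $n$ vertices we work with the $G$-pair $(\widehat E,\widehat B):=(EG\times C(G/H),\ EG\times(G/H))$, in which $\widehat E$ is a free contractible $G$-space, hence a model for $EG$, and $\widehat B$ is a closed $G$-invariant subspace; by definition $Z_K(\underline{EG},\underline{G/H})$ is $Z_K(\underline{\widehat E},\underline{\widehat B})$. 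Since $C(G/H)$ deformation retracts $G$-equivariantly onto its fixed cone point, $\widehat E/G=EG\times_G C(G/H)$ is a model for $BG$, while $\widehat B/G=EG\times_G(G/H)=EG/H$ is the standard model for $BH$, and these identifications are compatible with the standard map $BH\to BG$ induced by $\widehat B\hookrightarrow\widehat E$. The group $\Gamma:=G^n$ acts coordinatewise on $\prod_i\widehat E$, and, because each factor of $G$ acts freely on $\widehat E$ and preserves $\widehat B$, this action restricts to a \emph{free} action on the polyhedral subspace $Z_K(\underline{\widehat E},\underline{\widehat B})$.

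First I would form the Borel fibration
\[
Z_K(\underline{\widehat E},\underline{\widehat B})\ \longrightarrow\ E\Gamma\times_\Gamma Z_K(\underline{\widehat E},\underline{\widehat B})\ \longrightarrow\ B\Gamma,
\]
an honest fibre bundle with fibre $Z_K(\underline{\widehat E},\underline{\widehat B})=Z_K(\underline{EG},\underline{G/H})$ and base $B\Gamma=B(G^n)=(BG)^n$. It then remains to identify the total space with $Z_K(\underline{BG},\underline{BH})$ over $(BG)^n$. Because $\Gamma$ is a Lie group acting freely and properly on $Z_K(\underline{\widehat E},\underline{\widehat B})$ (for a suitable point-set model of $EG$, e.g.\ the Milnor infinite join), the orbit map is a principal $\Gamma$-bundle and the Borel construction $E\Gamma\times_\Gamma Z_K(\underline{\widehat E},\underline{\widehat B})$ is homotopy equivalent, over $B\Gamma$, to the orbit space $Z_K(\underline{\widehat E},\underline{\widehat B})/\Gamma$. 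Since passing to $\Gamma$-orbits is a left adjoint it commutes with the colimit over the face category of $K$ in Definition \ref{mac.defn}, and it sends each subproduct $\prod_{i\in\sigma}\widehat E\times\prod_{i\notin\sigma}\widehat B$ to $\prod_{i\in\sigma}(\widehat E/G)\times\prod_{i\notin\sigma}(\widehat B/G)$; hence $Z_K(\underline{\widehat E},\underline{\widehat B})/\Gamma=Z_K(\underline{\widehat E/G},\underline{\widehat B/G})$, which by the homotopy invariance of the polyhedral product functor \cite{cohen.macs} together with the pair equivalences $(\widehat E/G,\widehat B/G)\simeq(BG,BH)$ is homotopy equivalent to $Z_K(\underline{BG},\underline{BH})$. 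Tracing the identifications, the projection to $B\Gamma$ corresponds to the natural inclusion $Z_K(\underline{BG},\underline{BH})\hookrightarrow\prod_i BG=(BG)^n$ of the polyhedral product into the product, and the asserted fibration follows.

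The step I expect to cause the most trouble is the interchange between forming the Borel construction (passing to $\Gamma$-orbits) and applying the polyhedral product: one must choose a model of $EG$ for which the free $\Gamma$-action on $Z_K(\underline{\widehat E},\underline{\widehat B})$ is a genuine principal bundle, so that the Borel construction really computes the orbit space, and one must ensure that all the structure maps $\widehat B\hookrightarrow\widehat E$ are cofibrations of $CW$-complexes, so that the diagrams over the face poset of $K$ are cofibrant and the homotopy-invariance results of \cite{cohen.macs} apply. Once these technical points are in place, the identification of fibre, total space, and base is a routine diagram chase that mirrors the proof of the Denham--Suciu fibration, with the specialisation $H=\{1\}$ (so $G/H=G$, $BH=\ast$) recovering the classical fibration $Z_K(\underline{EG},\underline{G})\to Z_K(\underline{BG})\to(BG)^n$.
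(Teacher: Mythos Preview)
Your proposal is correct and follows essentially the same route as the paper: form the Borel fibration for the free $G^n$-action on $Z_K(\underline{\widehat E},\underline{\widehat B})$ with $(\widehat E,\widehat B)=(EG\times C(G/H),\,EG\times G/H)$, then identify the total space with $Z_K(\underline{BG},\underline{BH})$ via $\widehat E/G\simeq BG$ and $\widehat B/G=EG\times_G G/H\simeq BH$. The paper states this identification in one line, while you spell out the orbit/colimit interchange and the homotopy invariance more carefully, but the argument is the same.
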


As a consequence of Propositionn \ref{prop: analogue of Denham-Suciu INTRO}, we obtain the following splitting and short exact sequence of groups.

\begin{prop}\label{prop: analog of DS, splitting & ses INTRO}
Let $H$ be a closed subgroup of $G$. Then there is a homotopy equivalence
\[\Omega ( Z_K(BG,BH) ) \simeq G^n \times \Omega (Z_K(EG,G/H))\] 
and a short exact sequence of groups
\[1 \to\pi_1(Z_K(EG,G/H)) \to  \pi_1( Z_K(BG,BH) ) \to G^n \to 1.\]
Furthermore, this decomposition may not preserve the group structure.
\end{prop}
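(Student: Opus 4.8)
The plan is to derive everything from the fibration of Proposition~\ref{prop: analogue of Denham-Suciu INTRO}, namely
\[ Z_K(EG,G/H) \longrightarrow Z_K(BG,BH) \longrightarrow (BG)^n. \]
First I would loop this fibration. Since the loop space functor $\Omega$ turns a fibration $F \to E \to B$ into a fibration $\Omega F \to \Omega E \to \Omega B$, we obtain a fibration $\Omega(Z_K(EG,G/H)) \to \Omega(Z_K(BG,BH)) \to \Omega((BG)^n)$. Now $\Omega((BG)^n) \simeq (\Omega BG)^n \simeq G^n$ since $G$ is discrete (so $BG$ is a $K(G,1)$ and $\Omega BG \simeq G$ with the discrete topology). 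The key observation is that this looped fibration admits a section: the base point inclusion $(BG)^n \hookrightarrow Z_K(BG,BH)$ coming from the pair inclusion $(BH)^n$-side, composed appropriately — more precisely, the polyhedral product always receives a map from the product because $Z_K(X,A) \subseteq X^n$ and the projection $Z_K(BG,BH) \to (BG)^n$ is split by the inclusion of the ``fat wedge'' type subspace or simply by the fact that $Z_K(\underline{X},\underline{A})$ contains $A^n$... — here I would instead invoke that the fibration $Z_K(BG,BH) \to (BG)^n$ is the one induced by the classifying map, and the inclusion $A_i \hookrightarrow X_i$ on each factor induces a section after applying $\Omega$ because $\Omega BG \to \Omega BH$-type considerations. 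The cleanest route: the projection $Z_K(BG,BH)\to (BG)^n$ has a section up to homotopy given by the constant-in-$Z_K(EG,G/H)$-coordinate inclusion, since $(BG)^n = Z_K(BG, BG)$ maps to $Z_K(BG,BH)$? That is backwards, so instead I use that $\Omega$ of the projection is split because on fundamental groups / homotopy groups the map $\pi_*(Z_K(BG,BH)) \to \pi_*((BG)^n)$ is surjective with a splitting coming from the $n$ individual basepoint inclusions $BG = Z_{\Delta^0}(BG,BH) \to Z_K(BG,BH)$ into each vertex.

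Granting the section, a principal fibration (or just a fibration) over a base that is, after looping, an $H$-space $G^n$, with a section, splits as a product up to homotopy: $\Omega(Z_K(BG,BH)) \simeq \Omega(Z_K(EG,G/H)) \times G^n$. The standard argument is that the section together with the fiber inclusion gives a map $\Omega(Z_K(EG,G/H)) \times G^n \to \Omega(Z_K(BG,BH))$ (multiply the two images using the loop-space multiplication), and a Serre spectral sequence or Zeeman comparison argument shows it is a homotopy equivalence; alternatively, since the base $G^n$ is discrete, the looped fibration is already a product of the fibers over the components, one per element of $G^n$, all homotopy equivalent to $\Omega(Z_K(EG,G/H))$, and the section picks out a consistent identification — this is the most transparent proof and I would present it this way, emphasizing that $\Omega((BG)^n) \simeq G^n$ is homotopy discrete so "fibration over $G^n$" just means "one copy of the fiber per point."

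For the short exact sequence, I would apply $\pi_0$ to the looped fibration, or equivalently take the long exact homotopy sequence of the original fibration
\[\cdots \to \pi_2((BG)^n) \to \pi_1(Z_K(EG,G/H)) \to \pi_1(Z_K(BG,BH)) \to \pi_1((BG)^n) \to \pi_0(Z_K(EG,G/H)).\]
Since $BG$ is aspherical, $\pi_2((BG)^n) = 0$, giving injectivity on the left; and $\pi_1((BG)^n) = G^n$; and $Z_K(EG,G/H)$ is path-connected (the polyhedral product of connected spaces over a connected — indeed nonempty — $K$ is connected, or one checks $EG$ is connected and the gluing keeps it connected), so $\pi_0(Z_K(EG,G/H)) = \ast$ and the map to $G^n$ is surjective. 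This yields
\[1 \to \pi_1(Z_K(EG,G/H)) \to \pi_1(Z_K(BG,BH)) \to G^n \to 1.\]
Finally, for the caveat that the decomposition need not preserve the group structure: I would remark that the homotopy equivalence $\Omega(Z_K(BG,BH)) \simeq G^n \times \Omega(Z_K(EG,G/H))$ is a homotopy equivalence of spaces, not of $H$-spaces or loop spaces, because the section of the looped fibration is generally not an $H$-map — equivalently, the short exact sequence above need not split as groups, so $\pi_1(Z_K(BG,BH))$ is an extension of $G^n$ by $\pi_1(Z_K(EG,G/H))$ that is not necessarily a semidirect, let alone direct, product; I would illustrate with a small example (e.g. $K$ two disjoint points, $G = \Z$, $H$ trivial, where one gets a nonabelian-type extension or at least where the product structure manifestly fails) or simply defer to the general principle. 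The main obstacle is pinning down the section cleanly enough that the product splitting is rigorous rather than hand-waved; I expect the discreteness of $G^n = \Omega((BG)^n)$ to be exactly the feature that makes this painless, since a fibration over a homotopy-discrete space is automatically a disjoint union of copies of the fiber, and the section simply selects compatible basepoints.
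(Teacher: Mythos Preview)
Your approach---loop the fibration from Proposition~\ref{prop: analogue of Denham-Suciu INTRO} and split via a section---is exactly the paper's. The gap is that you repeatedly assume $G$ is discrete, whereas the context (Proposition~\ref{prop: analogue of Denham-Suciu INTRO}) takes $G$ to be a Lie group. The equivalence $\Omega BG \simeq G$ holds for \emph{any} topological group, so your justification via ``$BG$ is a $K(G,1)$'' is beside the point; more seriously, your preferred route to the splitting (``$G^n$ is homotopy discrete, so the looped fibration is a disjoint union of copies of the fiber'') collapses for, say, connected $G$. The argument that works in general---which you do sketch before abandoning it---is the standard one: the $n$ vertex inclusions $BG \hookrightarrow Z_K(BG,BH)$ loop and then multiply, using the loop-space product on $\Omega Z_K(BG,BH)$, to give a section $s\colon G^n \to \Omega Z_K(BG,BH)$; the map $\Omega Z_K(EG,G/H)\times G^n \to \Omega Z_K(BG,BH)$ obtained by multiplying the fiber inclusion with $s$ is then a weak equivalence by comparing long exact sequences. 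The paper merely asserts the section exists and does not elaborate.

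For the short exact sequence you invoke $\pi_2((BG)^n)=0$, again needing $G$ discrete. Without that hypothesis, injectivity on the left comes instead from the section: it forces $\pi_2(Z_K(BG,BH))\to\pi_2((BG)^n)$ to be (split) surjective, so the connecting homomorphism $\pi_2((BG)^n)\to\pi_1(Z_K(EG,G/H))$ vanishes; surjectivity on the right likewise follows from the section. (To be fair, the statement itself writes $G^n$ rather than $\pi_0(G)^n$ on the right of the sequence, so the paper is informal on this point as well.)
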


\

A group $G$ is called \textit{transitively commutative} if for any $g,h,k \in G$, $[g,h]=1$ and $[h,k]=1$ imply $[g,k]=1$. For $G$ a finite discrete groups and $G_1,\dots, G_n$ a collection of subgroups of $G$, there is a natural map 
$$ 
\varphi: G_1 \ast\dots \ast G_n \to G
$$
which sends words to their product in $G$.
There is also a natural map
$$ 
p: \prod_{\Gamma}G_i \to G,
$$
from the \textit{graph product} of $G_1,\dots,G_n$ (see Definition \ref{defn: graph product of gps}) to $G$, where $\Gamma$ is a simplicial graph, i.e. a 1-dimensional simplicial complex, with $n$ vertices. Similarly, the map $p$ sends words representing cosets to their product in $G$.
We investigate the problem when the map $\varphi$ can factor through $p$, that is through the graph product of groups $\prod_{\Gamma} G_i$. This is equivalent to finding the graphs $\Gamma$ such that the following diagram commutes
\begin{center}
\begin{tikzcd}
G_1 \ast \cdots \ast G_n \arrow{r}{\varphi} \arrow{d}[swap]{p} &G. \\
\prod_{\Gamma}G_i \arrow[dotted]{ur}
\end{tikzcd}
\end{center}
Among other things, answering this question gives information about commuting elements in $G$. We study this problem for the class of transitively commutative groups. More precisely, the aim is to give topological conditions characterizing transitively commutative groups, using polyhedral products. 

Moreover, we define an extension of the group theoretic concept of \textit{transitively commutative} groups, to \textit{k-transitively commutative} groups, which is a group theoretic construction reflected by the natural topology of the spaces in this article. Features of \textit{k-transitively commutative} groups are investigated after they are introduced, in Section \ref{section.extension}.

\

The organization of this article is as follows. In Section \ref{sec: polyhedral products} polyhedral products are introduced and some of their properties are given. In Section \ref{sec: htpy groups Z_K for BG} Theorem \ref{thm: Z_K is an E-M space iff K flag INTRO} and Proposition \ref{prop: fun.gp.Zk INTRO} are proved. In Section \ref{sec: htpy groups other Z_K} we prove Theorem \ref{thm: pp is 1-connected INTRO}, Proposition \ref{prop: 2nd homotopy group, special case INTRO}, Lemma \ref{lemma: f. group of subspace of poly. prod. INTRO} and Propositions \ref{prop: analogue of Denham-Suciu INTRO} and \ref{prop: analog of DS, splitting & ses INTRO}. Finally, in Section \ref{section.extension} we study the extension problem for transitively commutative groups.

\section{Polyhedral products}\label{sec: polyhedral products}

\


Polyhedral products can be \textit{loosely} regarded as functors from the category of topological spaces to the category of topological spaces, if the value of $K$ is fixed.
Here the discussion is confined to pointed $CW$-pairs $(X,A)$, where  
$(\underline{X},\underline{A} )$ denotes a finite sequence of pointed $CW$-pairs $\{(X_i,A_i)\}_{i=1}^n$. By $[n]$ we denote the set $\{1,2,\dots,n\}$, and by $2^{[n]}$ we mean the power set of $[n]$.

\begin{defn}
An \textit{abstract simplicial complex} (or only \textit{simplicial complex}) $K$ on $n$ vertices is a subset of the power set $2^{[n]}$ such that, if $\sigma \in K$ and $\tau \subseteq \sigma$ then $\tau \in K$. An element $\sigma \in K$, called a \textit{simplex}, is given by an increasing sequence of integers $\sigma=\{i_1, i_2, \dots ,i_q\}$, where $1 \leq i_1 < i_2 < \cdots < i_q \leq n$. In particular, the empty set $\varnothing$ is an element of $K$. The \textit{geometric realization} $|K|$ of $K$ is a simplicial subcomplex of the simplex $\Delta[n-1]$.
\end{defn}

Define a functor $D$ from an abstract simplicial complex $K$ to the category of pointed $CW$-complexes, $D: K \longrightarrow CW_{\ast}$ as follows. For any $\sigma \in K$ let 

\begin{tabular}{c c c}
$\ds D(\sigma)=\prod_{i=1}^n Y_i=Y_1 \times \cdots \times Y_n \subseteq \prod_{i=1}^n X_i,$
& where &
 $
  Y_i = \left\{\def\arraystretch{1.2}%
  \begin{array}{@{}c@{\quad}l@{}}
    A_i & : i \notin \sigma ,\\
       X_i & : i \in \sigma .\\
  \end{array}\right.
$
\end{tabular}

\begin{defn}\label{mac.defn}
The \textit{polyhedral product} (or \textit{generalized moment-angle complex} \cite{denham}) denoted $Z_K(\underline{X},\underline{A})$ is the topological space defined by the colimit
\[ Z_K(\underline{X},\underline{A})= \underset{{\sigma \in K}}{\colim}D(\sigma) = \bigcup_{\sigma \in K} D(\sigma)  \subseteq \prod_{i=1}^n X_i, \]
where the maps are the inclusions and the topology is the subspace topology.
\end{defn}

Similarly define a functor $\widehat{D}: K \longrightarrow CW_{\ast}$, so that for any $\sigma \in K$ we have 

\begin{tabular}{c c c}
$\ds \widehat{D}(\sigma)=\prod_{i=1}^n Y_i=Y_1 \wedge \cdots \wedge Y_n \subseteq \prod_{i=1}^n X_i,$
& where &
 $
  Y_i = \left\{\def\arraystretch{1.2}%
  \begin{array}{@{}c@{\quad}l@{}}
    A_i & : i \notin \sigma ,\\
       X_i & : i \in \sigma .\\
  \end{array}\right.
$
\end{tabular}

\

\begin{defn}\label{defn: smash polyhedral product}
The \textit{smash polyhedral product} $\widehat{Z}_K(\underline{X},\underline{A})$ is defined to be the image of the polyhedral product ${Z}_K(\underline{X},\underline{A})$ in the smash product $X_1 \wedge \cdots \wedge X_n$. For example, the image of $D(\sigma)$ in $\widehat{Z}_K(\underline{X},\underline{A})$ is given by $\widehat{D}(\sigma)$.
\end{defn}

In other words the polyhedral product is the colimit of the diagram of spaces $D(\sigma)$. Different notations can be found in the literature to denote a polyhedral product, such as $Z_K(X_i,A_i)$, $Z(K;(\underline{X},\underline{A}))$ and $(\underline{X},\underline{A})^K$. Whenever $A_i$ is the basepoint of $X_i$, we will write $Z_K(\underline{X})$ to mean $Z_K(\underline{X},\underline{\ast})$, or we simply write $Z_K({X})$ if all the spaces 
{\bf $X_i$} in the sequence of pairs are the same.

\begin{ex} Some classical examples of polyhedral products are the following.
\begin{enumerate}
\item Let $K=\{ \{1\},\dots,\{n\} \}$, $X_i=X$ and $A_i$ be the basepoint $\ast \in X$. Then 
$$Z_K(\underline{X},\underline{A})=X \vee \dots \vee X,$$
the $n$-fold wedge sum of the space $X$. 
\item Let $K=2^{[n]}$, then $Z_K(\underline{X},\underline{A})=X_1 \times \cdots \times X_n$.
\item Let $K=\{ \{1\},\{2\} \}$ and $(\underline{X},\underline{A})=(D^n,S^{n-1})$. Then 
\[Z_K(\underline{X},\underline{A})=D^n \times S^{n-1} \cup S^{n-1}\times D^n=\partial D^{2n}=S^{2n-1}.\]
\item If $|K|$ is the boundary of the simplex $\Delta[n-1]$, then $Z_K(\underline{X})$ is the fat wedge of the spaces $X_1,\dots , X_n$.
\end{enumerate}
\end{ex}

Polyhedral products give combinatorial models for some well-known spaces. As mentioned in the introduction, the Davis-Januszkiewicz space $\mathcal{DJ}(K)$ is one such space since it is homotopy equivalent to $Z_K(\mathbb{C}P^{\infty},\ast)$.

\begin{defn} \label{defn: flag cx}
The simplicial complex $|K|$ is a \textit{flag complex} if any finite set of vertices, which are pairwise connected by edges, spans a simplex in $|K|$.
\end{defn}

\begin{defn}\label{defn: graph product of gps}
Given a simplicial graph $\Gamma$ with vertex set $S$ and a family of groups $\{G_s\}_{s\in S}$, their \textit{graph product} 
\[\ds \prod_{\Gamma} G_s \]
is the quotient of the free product of the groups $G_s$ by the relations that elements of $G_s$ and $G_t$ commute whenever $\{s, t\}$ is an edge of $\Gamma$.
Note that if $\Gamma$ is the complete graph on $n$ vertices, then 
\[\ds \prod_{\Gamma} G_s \cong \prod_{i=1}^n G_i.\]
\end{defn}

Another example of interest is the classifying space of the right-angled Artin group (RAAG). Recall that an RAAG can be given in terms of generators and relations or as a graph product of a finite number of copies of the infinite cyclic group $\Z$.  The fundamental group of the polyhedral product $Z_K(S^1,\ast)$ is the right-angled Artin group given by the graph product $\prod_{SK_1 } \mathbb{Z}$, where $SK_1$ is the 1-skeleton of $K$. If $K$ is a flag complex, then $Z_K(S^1,\ast)$ is the classifying space of the right-angled Artin group, see \cite{davis.okun}. For other finite classifying spaces of Artin groups see \cite{charney.davis}.

\

In our discussion, $K$ will frequently mean the geometric realization of $K$. In what follows, assume that spaces are in the homotopy category of pointed $CW$-complexes unless otherwise stated.

\

\section{Pairs of classifying spaces of discrete groups}\label{sec: htpy groups Z_K for BG}

\

Let $G$ be a discrete group. In this section we consider polyhedral products constructed from pairs of spaces of the form $(BG,\ast)$, where $\ast$ is the natural basepoint, and study their homotopy groups. 
More precisely, we are interested in describing the fundamental group of $Z_K(\underline{BG})$, where $(\underline{BG},\underline{\ast})=\{ (BG_i,\ast_i)\}_{i=1}^n$ and $G_1,\dots,G_n$ are discrete groups.  Later we find necessary and sufficient conditions for the simplicial complex $K$ such that $Z_K(\underline{BG})$ is an Eilenberg-Mac Lane space. 

\

In general, for $G$ a topological group and $K$ a simplicial complex  on $n$ vertices, there is a fibration given by G. Denham and A. Suciu in \cite[Lemma 2.3.2]{denham}.
\begin{thm}\label{fibration.theorem}
Let $G$ be a topological group and $K$ a simplicial complex  on $n$ vertices. Then the following hold.
\begin{enumerate}
\item $EG^{ n} \times_{G^{ n}} Z_K(EG,G) \simeq Z_K(BG)$.
\item The homotopy fiber of the inclusion $Z_K(BG) \hookrightarrow BG^{n}$ is $Z_K(EG,G)$.
\end{enumerate}
\end{thm}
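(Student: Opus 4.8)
The plan is to realise $Z_K(EG,G)$ as the pullback of the $n$-fold product of the universal bundle $EG\to BG$ along the inclusion $Z_K(BG)\hookrightarrow BG^n$, after which both parts fall out from standard facts about fibrations and principal bundles. Concretely, I would fix the map $p=(p_1,\dots,p_n)\colon EG^n\to BG^n$, the product of $n$ copies of the universal principal $G$-bundle $p_1\colon EG\to BG$; each $p_i$ is a principal $G$-bundle, hence a fibration, so $p$ is a principal $G^n$-bundle for the coordinatewise action of $G^n$ on $EG^n$, and $EG^n$ is a model for $E(G^n)$. Using that in the pair $(EG,G)$ the subspace $G$ is a single $G$-orbit, which we take to be $p_i^{-1}(\ast)$, the key step is the equality of subspaces of $EG^n$
\[ p^{-1}(Z_K(BG)) = Z_K(EG,G). \]
Since preimages commute with unions it suffices to check this on each $D(\sigma)$: the corresponding piece of $Z_K(BG)$ is $\prod_{i=1}^{n} Y_i$ with $Y_i=BG$ for $i\in\sigma$ and $Y_i=\ast$ otherwise, and applying $p^{-1}$ coordinatewise replaces $BG$ by $p_i^{-1}(BG)=EG$ and $\ast$ by $p_i^{-1}(\ast)=G$, which is precisely the piece of $Z_K(EG,G)$ indexed by $\sigma$; taking the union over $\sigma\in K$ gives the displayed equality.

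Part (2) then follows at once. Restricting $p$ over the inclusion gives a strict pullback square
\begin{center}
\begin{tikzcd}
Z_K(EG,G) \arrow{r} \arrow{d} & EG^n \arrow{d}{p} \\
Z_K(BG) \arrow{r} & BG^n
\end{tikzcd}
\end{center}
whose bottom map is the inclusion $Z_K(BG)\hookrightarrow BG^n$. As $p$ is a fibration this is a homotopy pullback, so the homotopy fiber of $Z_K(BG)\hookrightarrow BG^n$ agrees with the homotopy fiber of $Z_K(EG,G)\to EG^n$; and since $EG^n$ is contractible the latter is just $Z_K(EG,G)$.

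For part (1), the map $Z_K(EG,G)\to Z_K(BG)$ is the pullback of the principal $G^n$-bundle $p$, hence is itself a principal $G^n$-bundle with orbit space $Z_K(EG,G)/G^n=Z_K(BG)$. Over a trivialising open set $U\subseteq Z_K(BG)$ the associated Borel construction looks like $U\times EG^n$, so the projection $EG^n\times_{G^n}Z_K(EG,G)\to Z_K(EG,G)/G^n=Z_K(BG)$ induced by $EG^n\to\ast$ is a fiber bundle with contractible fiber $EG^n$, hence a homotopy equivalence $EG^n\times_{G^n}Z_K(EG,G)\simeq Z_K(BG)$.

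I expect the only genuine friction to be point-set bookkeeping: one should work with CW-complexes (in a convenient category of spaces) so that products and quotients behave well; one needs that the coordinatewise $G^n$-action restricts to a free action on the closed subspace $Z_K(EG,G)$ whose quotient map is an honest principal bundle, which holds because it is pulled back from $p$; and one should be careful that ``$G$'' in the pair $(EG,G)$ is the orbit $p_i^{-1}(\ast)$, so that the preimage identification is an equality on the nose rather than merely up to homotopy. A second, equivalent packaging, which I would record as a remark, is to commute the Borel construction $EG^n\times_{G^n}(-)$ past the colimit defining $Z_K(EG,G)$ and evaluate it factorwise on each $D(\sigma)$ using $EG\times_G EG\simeq BG$ and $EG\times_G G\simeq\ast$; this version requires the defining colimit to be a homotopy colimit, which is true since the polyhedral-product diagram is built from cofibrations.
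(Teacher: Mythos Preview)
Your proposal is correct. The paper does not supply its own proof of this theorem but attributes it to Denham and Suciu, giving only the one-line sketch that one studies the bundle $Z_K(EG,G)\to EG^{n}\times_{G^{n}}Z_K(EG,G)\to BG^{n}$ and shows the total space is homotopy equivalent to $Z_K(BG)$; your pullback identification $p^{-1}(Z_K(BG))=Z_K(EG,G)$ is precisely the clean way to carry this out, and the alternative ``commute the Borel construction past the colimit'' packaging you record is likewise standard, so your argument is essentially what the cited reference contains.
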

Their theorem is a result of studying the bundle 
\[Z_K(EG,G)\longrightarrow  EG^{n} \times_{G^{ n}} Z_K(EG,G) \longrightarrow BG^n,\]
where $EG^{n} \times_{G^{ n}} Z_K(EG,G)$ is the quotient of $ EG^{n} \times Z_K(EG,G)$ by the diagonal action of $G^n$, and proving that the total space $EG^{n} \times_{G^{ n}} Z_K(EG,G)$ is homotopy equivalent to $Z_K(BG)$. This theorem can be extended directly to the sequence of $CW$-pairs 
$(\underline{BG})=\{({BG_i},\ast_i)\}_{i=1}^n$, where $G_i$ are topological groups for all $i$. Hence, there is a fibration
\begin{equation}\label{D-S-fibration}
Z_K(\underline{EG},\underline{G}) \longrightarrow Z_K(\underline{BG}) \longrightarrow {BG_1}\times \cdots \times BG_n,
\end{equation}
where similarly, the total space is homotopy equivalent to the twisted product of spaces
\[(EG_1 \times \cdots \times EG_n) \times_{(G_1 \times \cdots \times G_n)} Z_K(\underline{EG},\underline{G}).\]
We will refer to fibration (\ref{D-S-fibration}) as the \textit{Denham-Suciu fibration}, even though this fibration stems from earlier ideas in the study of moment-angle complexes by V. Buchstaber and T. Panov \cite{buchstaber2002torus}, as well as work of M. Davis and T. Januszkiewicz \cite{davis.januszckiewicz}.

\

\begin{lemma}\label{htpy.gps.Zk} Assume that $G_1,\dots,G_n$ are discrete groups and $K$ is a simplicial complex on $n$ vertices. The following properties hold:
\begin{enumerate}
\item $Z_K(\underline{BG})$ is path connected for any $K$.
\item $\pi_q(Z_K(\underline{EG},\underline{G})) \cong \pi_q(Z_K(\underline{BG}))$ for $q\geq 2$.
\item There is a short exact sequence of groups 
\[1 \rightarrow  \pi_1 (Z_K(\underline{EG},\underline{G})) \rightarrow \pi_1 (Z_K(\underline{BG})) \rightarrow \pi_1 (BG_1\times \cdots \times BG_n) \rightarrow 1. \]
\end{enumerate}
\end{lemma}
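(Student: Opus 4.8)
The plan is to read off all three statements from the Denham-Suciu fibration
\eqref{D-S-fibration}, namely
\[
Z_K(\underline{EG},\underline{G}) \longrightarrow Z_K(\underline{BG}) \longrightarrow BG_1\times \cdots \times BG_n,
\]
together with its long exact sequence in homotopy groups. First I would establish path connectedness (1): each $D(\sigma)$ is a product of spaces $BG_i$ and $\ast$, hence path connected, and all the $D(\sigma)$ share the common basepoint, so their union $Z_K(\underline{BG})$ is path connected for any $K$; equivalently one notes $\pi_0$ of the base $BG_1\times\cdots\times BG_n$ is trivial and the fiber is connected, or simply invoke that a colimit of path connected spaces along basepoint-preserving inclusions through a connected nerve is path connected. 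The base $BG_1\times\cdots\times BG_n$ is a $K(G_1\times\cdots\times G_n,1)$ since each $G_i$ is discrete, so $\pi_q(\text{base})=0$ for $q\geq 2$ and $\pi_1(\text{base})\cong G_1\times\cdots\times G_n$.

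Next, write down the long exact sequence of the fibration:
\[
\cdots \to \pi_{q+1}(BG_1\times\cdots\times BG_n) \to \pi_q(Z_K(\underline{EG},\underline{G})) \to \pi_q(Z_K(\underline{BG})) \to \pi_q(BG_1\times\cdots\times BG_n) \to \cdots
\]
For $q\geq 2$ the two outer terms $\pi_{q+1}$ and $\pi_q$ of the base vanish, so the middle map $\pi_q(Z_K(\underline{EG},\underline{G})) \to \pi_q(Z_K(\underline{BG}))$ is an isomorphism, proving (2). For (3), I would examine the tail of the sequence:
\[
\pi_2(BG_1\times\cdots\times BG_n) \to \pi_1(Z_K(\underline{EG},\underline{G})) \to \pi_1(Z_K(\underline{BG})) \to \pi_1(BG_1\times\cdots\times BG_n) \to \pi_0(Z_K(\underline{EG},\underline{G})).
\]
The leftmost term vanishes since the base is aspherical. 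It remains to check that the rightmost map $\pi_1(Z_K(\underline{BG})) \to \pi_1(BG_1\times\cdots\times BG_n)=G_1\times\cdots\times BG_n$ is surjective, i.e. that $\pi_0(Z_K(\underline{EG},\underline{G}))$ is trivial, which amounts to $Z_K(\underline{EG},\underline{G})$ being path connected — this follows because each $EG_i$ is path connected and each $G_i$, viewed discretely inside $EG_i$, contains the basepoint, so again $Z_K(\underline{EG},\underline{G})$ is a union of path connected products sharing a basepoint. Alternatively, surjectivity of $\pi_1(Z_K(\underline{BG})) \to G_1\times\cdots\times G_n$ can be seen directly: the vertex set of $K$ includes all of $[n]$ (every vertex is a simplex), so $Z_K(\underline{BG})$ contains $BG_1\vee\cdots\vee BG_n$, and the composite with the projection to the product realizes each generator of each $G_i$ factor. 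With surjectivity in hand and exactness at the two adjacent spots, the sequence
\[
1 \to \pi_1(Z_K(\underline{EG},\underline{G})) \to \pi_1(Z_K(\underline{BG})) \to G_1\times\cdots\times G_n \to 1
\]
is exact, which is (3).

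The only subtle point — and the part that would be the main obstacle if one is not careful — is the connectedness of $Z_K(\underline{EG},\underline{G})$, since if $K$ has no vertex in some coordinate $i$ one could worry that the $i$-th coordinate is forced to lie in $G_i$, which is discrete and hence not connected. But by the standing convention every singleton $\{i\}$ is a simplex of $K$ (a simplicial complex on $n$ vertices has all $n$ vertices), so in $D(\{i\})$ the $i$-th coordinate ranges over all of $EG_i$, which is connected; running over $i$ and using that the various $D(\sigma)$ overlap at the basepoint shows the whole polyhedral product is connected. One should also remark that the fibration \eqref{D-S-fibration} is exactly the extension to a sequence of pairs of the Denham–Suciu fibration recalled in Theorem \ref{fibration.theorem}, so all the homotopy-theoretic input is already available; the proof is then purely a matter of bookkeeping with the long exact sequence. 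I expect no genuine difficulty beyond making the connectedness claims precise.
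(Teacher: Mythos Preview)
Your proposal is correct and follows exactly the paper's approach: the paper's proof is the one-line observation that everything follows from the long exact sequence of the Denham--Suciu fibration together with the identification $BG_1\times\cdots\times BG_n \simeq B(G_1\times\cdots\times G_n)$. One small wording caution: the pieces $D(\sigma)$ of $Z_K(\underline{EG},\underline{G})$ are \emph{not} path connected (the $G_j$ factors are discrete), so your ``union of path connected products sharing a basepoint'' phrasing is off---use instead your alternative direct surjectivity argument via $BG_1\vee\cdots\vee BG_n \hookrightarrow Z_K(\underline{BG})$, which is clean and correct.
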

\begin{proof}
This follows directly from the long exact sequence in homotopy applied to the Denham-Suciu  fibration (\ref{D-S-fibration}) and the fact that there is an equivalence $BG_1\times \cdots \times BG_n=B(G_1\times \cdots \times G_n)$.
\end{proof}

Consider the following definition, which will give a different characterization of flag complexes.

\begin{defn}
Let $K$ be a simplicial complex on $n$ vertices. A \textit{minimal non-face} in $K$ is a subcomplex of $K$, which is the boundary of a simplex, but the simplex itself is not in $K$. For the purposes in this paper all minimal non-faces will be assumed to have at least three vertices.
\end{defn}

\begin{rmk}
Clearly if $K$ has a minimal non-face with three or more vertices, then it cannot be a flag complex. Therefore, a flag complex can be redefined to be a simplicial complex with no minimal non-faces.
\end{rmk}

The fundamental group of the spaces $Z_K(\underline{BG})$ is calculated next.

\begin{prop}\label{fun.gp.Zk}
Let $K$ be a simplicial complex on $n$ vertices and let $SK_1$ be the 1-skeleton of $K$. If $G_1,\dots,G_n$ are discrete groups, then
\[\pi_1 (Z_K(\underline{BG})) \cong \pi_1 (Z_{SK_1}(\underline{BG})) \cong \prod_{SK_1} G_i.\]
\end{prop}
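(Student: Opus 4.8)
The plan is to reduce the computation to the $1$-skeleton and then to known results. First I would observe that the polyhedral product $Z_K(\underline{X},\underline{A})$ only depends on the simplices of $K$ through the union $\bigcup_{\sigma\in K}D(\sigma)$, and that the inclusion of the $2$-skeleton data does not affect the $1$-type: more precisely, I would argue that the inclusion $Z_{SK_1}(\underline{BG})\hookrightarrow Z_K(\underline{BG})$ induces an isomorphism on $\pi_1$. The key point is that $Z_K(\underline{BG})$ is obtained from $Z_{SK_1}(\underline{BG})$ by attaching cells of the form $D(\sigma)=BG_{i_1}\times\cdots\times BG_{i_q}$ for simplices $\sigma$ of dimension $\geq 2$ (glued along their faces, which already lie in the $2$-skeleton's worth of product subspaces). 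Since each such $D(\sigma)$ is path-connected and the relevant subproducts $D(\partial\sigma)$ are already $\pi_1$-surjective into $D(\sigma)$ (a product of classifying spaces has $\pi_1$ generated by any of its $1$-skeleton faces' images together — indeed $\pi_1(BG_{i_1}\times\cdots\times BG_{i_q})=G_{i_1}\times\cdots\times G_{i_q}$ is already generated by the images of the edge-subproducts $BG_{i_a}\times BG_{i_b}$), attaching these pieces adds no new $\pi_1$ generators and no new relations. This gives the first isomorphism $\pi_1(Z_K(\underline{BG}))\cong\pi_1(Z_{SK_1}(\underline{BG}))$.

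Next I would identify $\pi_1(Z_{SK_1}(\underline{BG}))$ with the graph product $\prod_{SK_1}G_i$. One clean route is van Kampen: cover $Z_{SK_1}(\underline{BG})$ by the pieces $D(\sigma)$ for $\sigma$ a simplex of $SK_1$ (vertices and edges), where $D(\{i\})$ contributes $\pi_1=G_i$, $D(\{i,j\})$ contributes $\pi_1=G_i\times G_j$ (imposing the commuting relations exactly for edges $\{i,j\}\in SK_1$), and intersections are basepoints or single $BG_i$ factors. The colimit of fundamental groups over this diagram is, by definition, the free product of the $G_i$ modulo the relations that $G_i$ and $G_j$ commute whenever $\{i,j\}$ is an edge of $SK_1$ — that is, precisely $\prod_{SK_1}G_i$. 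Alternatively, and perhaps more efficiently, I would invoke the Denham–Suciu fibration together with the flag-complex case: since the flag complex $\mathrm{Flag}(SK_1)$ has the same $1$-skeleton as $SK_1$, the first isomorphism already proved shows $\pi_1(Z_{SK_1}(\underline{BG}))\cong\pi_1(Z_{\mathrm{Flag}(SK_1)}(\underline{BG}))$, and the latter is the graph product $\prod_{SK_1}G_i$ by the result of Davis–Okun cited in the introduction (using Example 1.6 of \cite{davis.okun} that $Z_L(\underline{BG})$ is aspherical for $L$ flag, with fundamental group the graph product over $SL_1=SK_1$).

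The main obstacle I anticipate is making the cell-attachment argument for the first isomorphism genuinely rigorous: one must be careful that attaching $D(\sigma)$ for a high-dimensional simplex really is, up to homotopy, attaching something along a $\pi_1$-surjective, $\pi_0$-iso inclusion, so that van Kampen applies to show $\pi_1$ is unchanged. The cleanest formulation is to induct on the simplices of dimension $\geq 2$, ordered by dimension, adding one at a time: at each stage $Z'=Z\cup D(\sigma)$ with $Z\cap D(\sigma)=\bigcup_{\tau\subsetneq\sigma}D(\tau)$ equal to the "fat wedge"-type subspace of $D(\sigma)=\prod_{i\in\sigma}BG_i$, and one checks this intersection is path-connected with $\pi_1$ surjecting onto $\pi_1(D(\sigma))$, whence van Kampen gives $\pi_1(Z')\cong\pi_1(Z)$. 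The path-connectivity and $\pi_1$-surjectivity of the fat-wedge-type subspace of a product of connected spaces is standard (it contains the full $1$-skeleton of the product up to homotopy), so this step, while requiring care, presents no real difficulty.
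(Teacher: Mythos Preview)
Your approach differs from the paper's, and as written it has a gap in the inductive step. You assert that $\pi_1$-surjectivity of the fat-wedge inclusion $\bigcup_{\tau\subsetneq\sigma}D(\tau)\hookrightarrow D(\sigma)$ is enough for van Kampen to give $\pi_1(Z')\cong\pi_1(Z)$. But surjectivity of $\pi_1(C)\to\pi_1(B)$ only yields $A*_{C}B\cong A/N$ with $N$ the normal closure of the image of $\ker(\pi_1(C)\to\pi_1(B))$ in $A$; you must also verify that this kernel dies in $\pi_1(Z)$, i.e.\ that attaching $D(\sigma)$ imposes no \emph{new} relations. What is actually true, and what you need, is that for $|\sigma|\geq 3$ the fat-wedge inclusion induces an \emph{isomorphism} on $\pi_1$, not merely a surjection. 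This can be shown by induction on $|\sigma|$ --- indeed it is essentially the proposition itself applied to $\partial\sigma$, whose $1$-skeleton is the complete graph on the vertices of $\sigma$ --- so your argument can be repaired by proving the two isomorphisms simultaneously in one induction. But the step is not the routine ``it contains the $1$-skeleton of the product'' that you suggest.

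The paper takes a different route for the first isomorphism: it passes via the Denham--Suciu fibration to the fibre $Z_K(\underline{EG},\underline{G})$. Since each $G_i$ is discrete, the pair $(EG_i,G_i)$ has the relative homotopy type of a pair in which $EG_i$ is one-dimensional (a cone on the discrete set $G_i$). In that model $Z_{SK_1}(\underline{EG},\underline{G})$ is a $2$-dimensional CW-complex, and every simplex $\sigma$ of dimension $\geq 2$ in $K$ contributes only cells of dimension $\geq 3$ to $Z_K(\underline{EG},\underline{G})$; hence $\pi_1$ is unchanged by a cellular-dimension argument, and comparing the two fibrations over $\prod_i BG_i$ transfers this back to $Z_K(\underline{BG})$. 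Your direct van Kampen approach avoids the fibration machinery but trades it for the nontrivial fat-wedge computation; the paper's dimension count sidesteps that entirely. Your identification of $\pi_1(Z_{SK_1}(\underline{BG}))$ with the graph product via iterated van Kampen matches the paper's.
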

\begin{proof}
Consider the following commutative diagram of spaces
\begin{center}
\begin{tikzcd}
F \arrow{r} \arrow{d}  & Z_{SK_q} (\underline{EG},\underline{G}) \arrow{r} \arrow{d}  & Z_K(\underline{EG},\underline{G}) \arrow{d} \\
F \arrow{r} \arrow{d} & Z_{SK_q} (\underline{BG}) \arrow{r} \arrow{d} & Z_K(\underline{BG}) \arrow{d} \\
\ast \arrow{r} & \prod_{i=1}^n {BG_i} \arrow{r} & \prod_{i=1}^n {BG_i} ,
\end{tikzcd}
\end{center}
where $F$ is the homotopy fibre and $SK_q$ is the $q$-skeleton of $K$, for $1 \leq q \leq n$. Note that the homotopy fibre $F$ of the inclusion 
\[ Z_{SK_q}(\underline{EG},\underline{G}) \longhookrightarrow Z_K(\underline{EG},\underline{G})\]
is homotopy equivalent to the homotopy fibre $F$ of the inclusion
\[ Z_{SK_q}(\underline{BG}) \longhookrightarrow Z_K(\underline{BG}).\]
In particular, for the 1-skeleton $SK_1$ there is an inclusion
$$
Z_{SK_1}(\underline{BG}) \longhookrightarrow Z_K(\underline{BG})).
$$
It follows from the Seifert-Van Kampen theorem, applied finitely many times to the polyhedral product, that the fundamental group $\pi_1(Z_{SK_1}(\underline{BG}))$ is isomorphic to the graph product 
$
 \prod_{SK_1} G_i.
$

The following diagram obtained from above
\begin{center}
\begin{tikzcd}
Z_{SK_q} (\underline{EG},\underline{G}) \arrow{r} \arrow{d}  & Z_K(\underline{EG},\underline{G}) \arrow{d} \\
Z_{SK_q} (\underline{BG}) \arrow{r} & Z_K(\underline{BG})
\end{tikzcd}
\end{center}
commutes. It suffices to show that on the level of fundamental groups, the induced map
$\pi_1(Z_{SK_1}(\underline{EG},\underline{G})) \to \pi_1(Z_K(\underline{EG},\underline{G}))$
is an injection. It will then follow that the map
$
\pi_1(Z_{SK_1}(\underline{BG})) \to \pi_1(Z_K(\underline{BG})))
$
is both an injection and surjection since $F$ is path-connected.

Note that $Z_{SK_1}(\underline{EG},\underline{G})$ is a 2-dimensional $CW$-complex and adding 2-dimensional or higher dimensional faces $\sigma$ to the $1$-skeleton $SK_1$ only adds 3-dimensional or higher dimensional cells $D(\sigma)$ to the space $Z_{SK_q}(\underline{EG},\underline{G})$ (see Definition \ref{mac.defn}). Therefore, adding these cells to the polyhedral product  does not change the fundamental group, see \cite[e.g. Corollary 4.12 and Example 4.16]{hatcher2002algebraic}.
\end{proof}

\subsection{{Zero simplicial complexes}}

For a fixed abstract simplicial complex $K$, the polyhedral product is a homotopy functor, which means that for fixed $K$ the homotopy type of $Z_K(\underline{X},\underline{A})$ depends only on the relative homotopy type of the pairs $(\underline{X},\underline{A})$. This fact was also observed in \cite[Section 2.2]{denham}. In particular, this is true for the $0$-simplicial complex on $n$ vertices and for the pair $(EG,G)$, where $G$ is a topological group. The following lemma proves a special case of this fact, which will be used in a later discussion.

\begin{lemma}\label{lemma: rel homotopy equiv}
Let $G$ be a finite discrete group of order $m$. Then there is a relative homotopy equivalence $(EG,G)\sim ([0,1],F)$, where $F$ is a discrete subset of $[0,1]$ with $m$ elements.
\end{lemma}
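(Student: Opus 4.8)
The plan is to exhibit an explicit model of $EG$ together with a $G$-equivariant deformation retraction that simultaneously retracts $EG$ onto a contractible subspace homeomorphic to $[0,1]$ and carries the free $G$-orbit of the basepoint to $m$ distinct points of $[0,1]$. First I would recall that since $G$ has order $m$, the simplest available model for $EG$ is the infinite join $G \ast G \ast G \ast \cdots$, but for the purpose of a relative homotopy equivalence it is cleaner to use a finite approximation: the join $EG_k = G^{\ast k}$ of $k$ copies of the discrete set $G$. Each such $G^{\ast k}$ is $(k-2)$-connected, carries a free diagonal $G$-action, and the orbit space is a $K(G,1)$-approximation. Rather than push $k \to \infty$, the key observation is that homotopy equivalences of pairs only require the relevant relative homotopy groups to match, and here the pair $([0,1],F)$ has $[0,1]$ contractible and $F$ a discrete $m$-point set, so $\pi_0(F) \to \pi_0([0,1])$ is the only nontrivial piece of data, namely an $m$-to-one surjection of a free $G$-set onto a point.

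The concrete construction I would carry out: take $EG$ to be any CW model which is contractible with free $G$-action — for definiteness the realization of the translation groupoid, i.e. the simplicial set $EG$ with $n$-simplices $G^{n+1}$ and the usual face/degeneracy maps, whose geometric realization is contractible and on which $G$ acts freely. The subspace $G \subset EG$ is the $0$-skeleton, a discrete set of $m$ points permuted freely by $G$. Now I would build a path-connected $1$-dimensional subcomplex $T \subset EG$ (a tree, or more simply an interval) that contains all $m$ vertices of $G$: since $EG$ is path-connected one can choose $m-1$ edges joining the $m$ vertices into a tree, and a maximal tree in a contractible CW-complex can be taken so that the inclusion $T \hookrightarrow EG$ is a homotopy equivalence — indeed $EG$ deformation retracts onto any spanning tree of its $1$-skeleton after collapsing, because $EG$ is contractible and simply connected so all higher cells can be absorbed. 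A tree on $m$ vertices is itself homotopy equivalent (in fact, after subdividing and straightening, homeomorphic rel its $m$ leaves-or-vertices is too strong, but homeomorphic as a space) to a compact contractible graph; choosing the tree to be a path gives exactly $[0,1]$ with $F$ = the image of the $m$ vertices, which one can arrange to be the points $0, \tfrac{1}{m-1}, \dots, 1$.

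The steps in order are therefore: (1) fix the simplicial/CW model of $EG$ with its free $G$-action and identify $G$ with the $0$-skeleton; (2) show $EG$ admits a deformation retraction onto its $1$-skeleton and then onto a spanning tree $T$ of that $1$-skeleton containing all $m$ vertices, using contractibility to kill the higher cells and simple-connectivity to collapse loops — this gives a homotopy equivalence $EG \simeq T$ restricting to the identity on $G$; (3) observe $T$ is a finite tree with $m$ distinguished vertices, and choose it (or further deformation retract it) to be an interval, yielding a homeomorphism $T \cong [0,1]$ sending the $m$ vertices to a prescribed $m$-point subset $F$; (4) assemble these into a map of pairs $(EG,G) \to ([0,1],F)$ which is a homotopy equivalence on total spaces and a bijection, hence homotopy equivalence, on the subspaces, i.e. a relative homotopy equivalence. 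I expect the main obstacle to be step (2): making precise that a contractible CW-complex deformation retracts onto a spanning tree of its $1$-skeleton while keeping the chosen $0$-cells fixed. One must either invoke the standard fact that any connected CW-complex collapses onto (is homotopy equivalent to) a wedge determined by a spanning tree — and here the wedge is trivial since $\pi_1 = 1$ — or argue cell-by-cell; care is needed because the naive retraction of the $1$-skeleton onto a tree does not extend over $2$-cells unless one first uses the attaching maps, but contractibility guarantees the obstruction vanishes. Alternatively, one sidesteps this entirely by noting that the homotopy type of $Z_K$ depends only on the relative homotopy type of the pair (stated just above in the excerpt), so it suffices to produce \emph{any} pair $(E,G)$ with $E$ contractible, $G$ acting freely with $m$ orbits of the chosen point, weakly equivalent rel $G$ to $([0,1],F)$ — and the two pairs $(EG, G)$ and $([0,1], F)$ both have total space contractible and subspace a discrete $m$-point set, so the pair inclusions induce isomorphisms on all relative homotopy groups, giving the relative homotopy equivalence by Whitehead's theorem for pairs.
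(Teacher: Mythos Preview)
Your main plan contains a genuine error: a contractible CW-complex does \emph{not} in general deformation retract onto its $1$-skeleton. Take $G=\Z/2$ in the join model $EG = G * G * G * \cdots \simeq S^{\infty}$; the $1$-skeleton $G*G$ is a circle, onto which $S^{\infty}$ certainly does not retract. What \emph{is} true is that $EG$ deformation retracts onto any contractible subcomplex (the inclusion of a subcomplex is a cofibration, and a cofibration that is a homotopy equivalence admits a deformation retraction), so you could bypass the $1$-skeleton and go directly to a spanning tree $T$. But then step (3) has a further gap: an arbitrary spanning tree need not be an interval, and you cannot deformation retract a tree onto a sub-interval while keeping all $m$ marked vertices fixed unless those vertices already lie on a single path in $T$ --- think of a star with $m\geq 3$ leaves. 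You would need to argue that your model admits a Hamiltonian path through the $m$ orbit points (true in the bar construction, whose $1$-skeleton is the complete graph on $G$, but this has to be said).

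All of this machinery is unnecessary, and the paper's argument is essentially one line: since $EG$ is path-connected, pick a path $\gamma:[0,1]\to EG$ with $\gamma(f_i)=g_i\cdot x$ for chosen points $0=f_1<\cdots<f_m=1$. This is already a map of CW-pairs $([0,1],F)\to(EG,G)$ which is a bijection on the (discrete) subspaces and a map between contractible total spaces, hence a homotopy equivalence of pairs. Your closing alternative via Whitehead for pairs is exactly this idea, but you never say how to produce the comparison map --- there is no ``pair inclusion'' between $(EG,G)$ and $([0,1],F)$, and Whitehead needs an actual map to apply to. The path $\gamma$ \emph{is} that map; once you write it down the lemma is immediate.
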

\begin{proof}
By definition $EG$ is contractible and the group $G$ can be identified with the orbit of a point $x \in EG$, since $G$ acts freely on $EG$. This gives an equivalence of $CW$-pairs $(EG,G) \simeq ([0,1],F)=(I,F)$, where $$F=\{f_1<\dots< f_m\} \approx \{(1=g_1)\cdot x=x,g_2\cdot x,\dots,g_m\cdot x\}$$ is a finite subset of $I$ with the same cardinality as $G$. One can pick a path $\gamma: [0,1]\to EG$ such that $\gamma(f_i)=g_i \cdot x$. In particular, $F$ can be chosen to be $F=\{0=f_1<\dots<f_i<\cdots< f_m=1\}$.
\end{proof} 

Hence, if $K$ has $n$ vertices, there is an equivalence $Z_K(\underline{EG},\underline{G})\simeq Z_K(\underline{I},\underline{F})$, where $(\underline{I},\underline{F})=\{(I,F_i)\}_{i=1}^n$ and $|F_i|=|G_i|$.

\begin{prop}\label{prop: poly prod for 0-skeleton} Let $K$ be a discrete set of $r$ points and $F_i$ be finite subsets of $I=[0,1]$ with $|F_i|=m_i$, $F_i=\{0=f_{1}^i<\dots<f_{j}^i<\cdots f_{m_i}^i=1\}$, for $1 \leq i \leq r$. Then there is a homotopy equivalence
$$Z_K (\underline{I},\underline{F}) \simeq \bigvee_{N_r}S^1,$$ 
where $N_r$ is given inductively as follows:
\begin{align*}
N_2&=(m_1-1)(m_2-1), \\
N_r&=m_rN_{r-1}+(m_r-1)(\prod_{i=1}^{r-1} {m_i} -1), \text{ for } r \geq 3. 
\end{align*}

\end{prop}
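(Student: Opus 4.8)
The plan is to compute the homotopy type of $Z_K(\underline{I},\underline{F})$ when $K$ is a discrete set of $r$ points by induction on $r$, peeling off one vertex at a time via a Mayer--Vietoris / Seifert--Van Kampen decomposition. For a discrete $K$ on $r$ vertices, Definition \ref{mac.defn} gives
\[
Z_K(\underline{I},\underline{F}) \;=\; \bigcup_{i=1}^r \bigl( F_1 \times \cdots \times F_{i-1} \times I \times F_{i+1} \times \cdots \times F_r \bigr) \;\subseteq\; I^r,
\]
so each ``arm'' $D(\{i\})$ is a disjoint union of $\prod_{j\neq i} m_j$ copies of an interval, and two arms $D(\{i\}), D(\{k\})$ meet in $\prod_{j} m_j$ (with $m_i, m_k$ replaced by... precisely in $F_1\times\cdots\times F_r$, a discrete set of $\prod_j m_j$ points). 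The base case $r=2$ is Example-type: $Z_K(\underline{I},\underline{F})$ is a graph with $m_1 m_2$ vertices (the set $F_1\times F_2$) and $m_1(m_2-1) + m_2(m_1-1)$ edges (the segments of the horizontal and vertical arms between consecutive grid points); since it is connected, its first Betti number is
\[
b_1 \;=\; \bigl(m_1(m_2-1)+m_2(m_1-1)\bigr) - \bigl(m_1 m_2 - 1\bigr) \;=\; (m_1-1)(m_2-1) \;=\; N_2,
\]
and being a connected graph it is homotopy equivalent to $\bigvee_{N_2} S^1$.

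For the inductive step, write $K = K' \sqcup \{r\}$ where $K'$ is the discrete complex on the first $r-1$ vertices. I would decompose
\[
Z_K(\underline{I},\underline{F}) \;=\; \underbrace{\bigl(Z_{K'}(\underline{I},\underline{F}')\times F_r\bigr)}_{=:U} \;\cup\; \underbrace{\bigl(F_1\times\cdots\times F_{r-1}\times I\bigr)}_{=:V},
\]
where $\underline{F}' = \{F_1,\dots,F_{r-1}\}$. Here $U$ is the union of the arms that are ``thin'' in the last coordinate (they live over the $m_r$ points of $F_r$), $V$ is the single arm that is ``fat'' in the last coordinate, and $U\cap V = F_1\times\cdots\times F_{r-1}\times F_r$ is a discrete set of $m_r\prod_{i=1}^{r-1}m_i$ points. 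Now $U \simeq \bigvee_{m_r} \bigl(\bigvee_{N_{r-1}} S^1\bigr)$ up to homotopy, but since $F_r$ is discrete it is genuinely $m_r$ disjoint copies of $Z_{K'}(\underline{I},\underline{F}')$, each a connected graph with first Betti number $N_{r-1}$; and $V$ is $\prod_{i=1}^{r-1} m_i$ disjoint copies of the contractible interval $I$. The space $Z_K(\underline{I},\underline{F})$ is a connected $1$-complex (connectedness follows from Lemma \ref{htpy.gps.Zk}(1) via the equivalence $Z_K(\underline{EG},\underline{G})\simeq Z_K(\underline{I},\underline{F})$, or directly), hence homotopy equivalent to a wedge of circles, and it remains only to count $b_1$.

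To count $b_1 = \operatorname{rank} H_1$, I would use the reduced Mayer--Vietoris sequence for $U, V$: since $U\cap V$ is discrete, $H_1(U\cap V)=0$, so
\[
0 \to H_1(U)\oplus H_1(V) \to H_1(Z_K) \to \widetilde H_0(U\cap V) \to \widetilde H_0(U)\oplus\widetilde H_0(V) \to \widetilde H_0(Z_K)\to 0.
\]
We have $\operatorname{rank} H_1(U) = m_r N_{r-1}$, $H_1(V)=0$, $\operatorname{rank}\widetilde H_0(U\cap V) = m_r\prod_{i=1}^{r-1}m_i - 1$, $\operatorname{rank}\widetilde H_0(U) = m_r - 1$, $\operatorname{rank}\widetilde H_0(V) = \prod_{i=1}^{r-1}m_i - 1$, and $\widetilde H_0(Z_K)=0$. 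Taking the alternating sum of ranks in the exact sequence and solving for $\operatorname{rank} H_1(Z_K)$ gives
\[
\operatorname{rank} H_1(Z_K) \;=\; m_r N_{r-1} + \Bigl(m_r\textstyle\prod_{i=1}^{r-1}m_i - 1\Bigr) - (m_r-1) - \Bigl(\textstyle\prod_{i=1}^{r-1}m_i - 1\Bigr) \;=\; m_r N_{r-1} + (m_r-1)\Bigl(\textstyle\prod_{i=1}^{r-1}m_i - 1\Bigr),
\]
which is exactly $N_r$. Since $Z_K(\underline{I},\underline{F})$ is a connected CW-complex of dimension $1$ with free $H_1$ of rank $N_r$, it is homotopy equivalent to $\bigvee_{N_r} S^1$, completing the induction.

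\textbf{Main obstacle.} The routine arithmetic is the Mayer--Vietoris rank count; the only genuinely delicate point is justifying that the decomposition $Z_K = U\cup V$ is a valid (e.g. excisive, or CW-subcomplex) cover so that Mayer--Vietoris applies and that the pieces $U$, $V$, $U\cap V$ have the claimed homotopy types — this is where one must be careful that $F_r$ (and each $F_i$) is a genuine discrete subspace of $I$, so that products with $F_r$ literally split as disjoint unions rather than merely up to homotopy. Using the explicit coordinates $F_i = \{0 = f_1^i < \cdots < f_{m_i}^i = 1\}$ from Lemma \ref{lemma: rel homotopy equiv} makes everything a finite CW-complex and renders the cover a cover by subcomplexes, so this obstacle is more bookkeeping than substance; once it is dispatched, the induction is immediate.
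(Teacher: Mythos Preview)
Your proof is correct and complete; the Mayer--Vietoris rank count goes through exactly as you wrote it, and the observation that $Z_K(\underline{I},\underline{F})$ is a connected $1$-dimensional CW-complex (hence a wedge of circles determined by its first Betti number) is the right organizing principle. One small quibble: Lemma \ref{htpy.gps.Zk}(1) asserts connectedness of $Z_K(\underline{BG})$, not of the fibre $Z_K(\underline{EG},\underline{G})$; you need part (3) of that lemma (surjectivity of $\pi_1(Z_K(\underline{BG}))\to \prod_i G_i$) together with the long exact sequence to conclude $\pi_0$ of the fibre is trivial --- or, as you say, the direct argument (moving one coordinate at a time through the arms $D(\{i\})$) is immediate.

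Your route is genuinely different from the paper's. The paper argues geometrically: it builds an explicit spanning tree $T_r$ in the graph $Z_K(\underline{I},\underline{F})$ by a ``snake'' pattern through the $m_r$ hyperplane slices, each slice carrying a copy of $T_{r-1}$, joined by one vertical edge between consecutive slices; then it counts the edges \emph{not} in $T_r$ to get $N_r$. Your approach replaces this hands-on combinatorics with the homological machinery of Mayer--Vietoris applied to the cover $U\cup V$, which packages the same edge/vertex bookkeeping into an alternating-sum identity. The paper's argument is more elementary (no exact sequences, just Euler-characteristic-style counting after collapsing a tree) and yields an explicit contraction, while yours is cleaner to write down, scales without pictures, and makes the inductive decomposition $Z_K = (Z_{K'}\times F_r)\cup(\prod_{i<r}F_i\times I)$ the visible structural ingredient rather than leaving it implicit in the tree construction.
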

\begin{proof} Recall that if $T$ is a spanning tree of a connected graph $\Gamma$ with a finite number of vertices, then collapsing $T$ to a point does not change the homotopy type of $\Gamma$. $\Gamma / T$ has only one vertex and has the homotopy type of a finite wedge of circles. Since $T$ is contractible, then $\Gamma \simeq \Gamma/T$.

To complete the proof it suffices to find the number of circles $N_r$ and the proof is given by induction on r. 

\begin{figure}[htbp]
\centering
\begin{minipage}{0.4\linewidth}
\centering
{
\begin{tikzpicture}
\draw[dotted](0,0)--(0,1);
\draw[dotted](0,2)--(0,3);
\draw[dotted](3,1)--(3,2);
\draw[dotted](1,0)--(1,3);
\draw[dotted](2,0)--(2,3);
\path[draw,thick](0,0)--(3,0);
\path[draw,thick](3,0)--(3,1);
\path[draw,thick](0,1)--(3,1);
\path[draw,thick](0,1)--(0,2);
\path[draw,thick](3,2)--(0,2);
\path[draw,thick](3,2)--(3,3);
\draw[->,thick](3,3)--(0,3);
\coordinate [label=below left:$0$] (1) at (0,0);
\coordinate [label=left:$f_{2}^2$] (2) at (0,1);
\coordinate [label=left:$\vdots$] (3) at (0,2);
\coordinate [label=left:$f_{m_2}^2$] (5) at (0,3);
\coordinate [label=below:$f_{2}^1$] (2) at (1,0);
\coordinate [label=below:$\dots$] (3) at (2,0);
\coordinate [label=below:$f_{m_1}^1$] (5) at (3,0);
\end{tikzpicture}
}
\caption{$T_2,r=2$}\label{fig:t2}
\end{minipage}%
\begin{minipage}{0.4\linewidth}
\centering
{
\vspace*{1em}
\begin{tikzpicture}[scale=.94]
\path[draw,thick](0,0)--(3,0);
\path[draw,thick](3,0)--(5,1);
\path[draw,thick](5,1)--(2,1);
\path[draw,thick](2,1)--(2,3);
\path[draw,thick](2,3)--(5,3);
\path[draw,thick](5,3)--(3,2);
\draw[->,thick](3,2)--(0,2);
\draw[dotted](0,0)--(0,2);
\draw[dotted](0,0)--(2,1);
\draw[dotted](3,0)--(3,2);
\draw[dotted](5,1)--(5,3);
\draw[dotted](0,2)--(2,3);
\coordinate [label=below left:$0$] (1) at (0,0);
\coordinate [label=below:$\hdots$] (3) at (1.5,0);
\coordinate [label=below:$f_{m_1}^1$] (5) at (3,0);
\coordinate [label=left:$f_{m_2}^2$] (5) at (2,1);
\coordinate [label=left:$f_{m_3}^3$] (5) at (0,2);
\coordinate [label=left:$\vdots$] (5) at (0,1);
\coordinate [label=left:$.$] (1) at (1,0.5);
\coordinate [label=left:$.$] (2) at (0.86,0.43);
\coordinate [label=left:$.$] (3) at (1.14,0.57);\end{tikzpicture}
}
\vspace*{-1em}

\caption{$T_3,r=3$}\label{fig:t3}
\end{minipage}
\end{figure}

$r=2$: $K$ has two vertices and $Z_K(\underline{I},\underline{F})$ contains a maximal tree, which we denote $T_2$, defined in the following way: It starts at the point $(0,0)\in I\times I$ and runs parallel to the first coordinate and goes to the next level (i.e planes $y=f_{2}^2,\dots , y=f_{m_2}^2$ ) by using one of the extreme vertical edges. $T_2$ contains all the vertices and has no loops, hence it is a spanning tree. There are $N_2=(m_1-1)(m_2-1)$ edges not in $T_2$. Figure \ref{fig:t2} shows an interpretation of $T_2$.

$r=3$: $K$ has three vertices and $Z_K(\underline{I},\underline{F})$ contains a maximal tree called $T_3$ (see Figure \ref{fig:t3}) defined in the similar way as above: on each level parallel to the $xy$-plane it is the same as $T_2$ and it needs a vertical edge to jump to the next dimension each time. There are $m_3$ levels, each having $N_2$ edges not in $T_3$, and there are $m_3-1$ spaces between levels, each having $\prod^{2}_{i=1}(m_i)-1$ edges not in $T_3$. Therefore there are $N_3=m_3N_2+(m_3-1)(\prod^{2}_{i=1}(m_i)-1)$. 

Assume true for $r=n$, that is, $N_n=m_{n}N_{n-1}+(m_n-1)(\prod^{n-1}_{i=1}(m_i)-1)$. Let $K_{[n]}$ denote a finite set with $n$ points. For $r=n+1$ there is an inclusion $Z_{K_{[n+1]}}(\underline{I},\underline{F})\subseteq \mathbb{R}^{n+1}$. Set
\[ A_n=Z_{K_{[n]}}(\underline{I},\underline{F}) \subseteq\mathbb{R}^{n},\] 
then $A_n\simeq \bigvee _{N_n}S^1$. Figure \ref{fig:A_n+1} describes $A_{n+1}$,
\begin{figure}[ht!]
\centering
\begin{tikzpicture}
\draw[line width=1pt] (0,0) rectangle (2,2);
\node (A) at (1,1) {$A_n$};
\path[draw](2,2)--(3.5,2);
\path[draw](2,1.8)--(3.5,1.8);
\path[draw](2,1.6)--(3.5,1.6);
\path[draw](2,.2)--(3.5,.2);
\path[draw](2,0)--(3.5,0);
\draw[dotted](2.75,1.4)--(2.75,0.4);
\draw[line width=1pt] (3.5,0) rectangle (5.5,2);
\node (A) at (4.5,1) {$A_n$};
\path[draw](5.5,2)--(7,2);
\path[draw](5.5,1.8)--(7,1.8);
\path[draw](5.5,1.6)--(7,1.6);
\path[draw](5.5,.2)--(7,.2);
\path[draw](5.5,0)--(7,0);
\draw[dotted](6.25,1.4)--(6.25,0.4);
\path[draw](8,2)--(9.5,2);
\path[draw](8,1.8)--(9.5,1.8);
\path[draw](8,1.6)--(9.5,1.6);
\path[draw](8,.2)--(9.5,.2);
\path[draw](8,0)--(9.5,0);
\draw[dotted](8.75,1.4)--(8.75,0.4);
\draw[dotted](7.2,.9)--(7.8,.9);
\draw[line width=1pt] (9.5,0) rectangle (11.5,2);
\node (A) at (10.5,1) {$A_n$};
\end{tikzpicture}
\caption{$A_{n+1}$} \label{fig:A_n+1}
\end{figure}
where between any two consecutive $A_n$'s there are $\prod^{n}_{i=1}m_i$ edges. Each $A_n$ contains the maximal tree $T_n$ (they all lie on different planes $y=f_{j}^{n+1}$) and the number of edges in $A_n$ not in $T_n$ is $N_n$. One edge is used between two consecutive $A_n$'s to complete the graph $T_{n+1}$, so there are $(\prod^{n}_{i=1}m_i)-1$ edges not in $T_{n+1}$. Hence the total number of edges not in $T_{n+1}$ is $N_{n+1}=m_{n+1}N_{n}+(m_{n+1}-1)(\prod^{n}_{i=1} m_i-1)$.
\end{proof} 
\begin{cor}\label{cor: fibre rank} The value of $N_r$ in Proposition \ref{prop: poly prod for 0-skeleton} is
\[N_r=(r-1)\prod_{i=1}^r m_i - \sum_{i=1}^r (\prod_{j\neq i} m_j)+1.\]
\end{cor}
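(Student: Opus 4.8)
## Proof proposal for Corollary~\ref{cor: fibre rank}

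The plan is to verify that the closed-form expression
\[
M_r := (r-1)\prod_{i=1}^r m_i - \sum_{i=1}^r\Bigl(\prod_{j\neq i} m_j\Bigr) + 1
\]
satisfies the same initial condition and recurrence as $N_r$ in Proposition~\ref{prop: poly prod for 0-skeleton}, and then invoke uniqueness of solutions to a first-order recurrence. First I would check the base case $r=2$: directly, $M_2 = (2-1)m_1m_2 - (m_2 + m_1) + 1 = m_1m_2 - m_1 - m_2 + 1 = (m_1-1)(m_2-1) = N_2$, so the formulas agree at $r=2$. Since $N_r$ is defined for $r\ge 2$ by $N_r = m_r N_{r-1} + (m_r - 1)\bigl(\prod_{i=1}^{r-1} m_i - 1\bigr)$, it suffices to prove that $M_r = m_r M_{r-1} + (m_r-1)\bigl(\prod_{i=1}^{r-1} m_i - 1\bigr)$ for all $r\ge 3$; then $M_r = N_r$ for all $r\ge 2$ by induction.

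The core of the argument is therefore the algebraic identity $M_r = m_r M_{r-1} + (m_r-1)\bigl(\prod_{i=1}^{r-1} m_i - 1\bigr)$. To establish it I would expand the right-hand side. Write $P_k := \prod_{i=1}^k m_i$ and $S_r := \sum_{i=1}^r \prod_{j\neq i, \, j\le r} m_j$, so that $M_r = (r-1)P_r - S_r + 1$. On the right-hand side,
\[
m_r M_{r-1} = m_r\bigl((r-2)P_{r-1} - S_{r-1} + 1\bigr) = (r-2)P_r - m_r S_{r-1} + m_r,
\]
and $(m_r - 1)(P_{r-1} - 1) = P_r - P_{r-1} - m_r + 1$. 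Adding these gives $(r-1)P_r - m_r S_{r-1} - P_{r-1} + 1$. It remains to check that $-m_r S_{r-1} - P_{r-1} = -S_r$, i.e. that $S_r = m_r S_{r-1} + P_{r-1}$. This is immediate from the definition: among the $r$ terms of $S_r$, the $r-1$ terms with $i\le r-1$ each contain the factor $m_r$ and, after removing it, run over exactly the terms of $S_{r-1}$, contributing $m_r S_{r-1}$; the single remaining term, with $i = r$, is $\prod_{j\le r-1} m_j = P_{r-1}$. Substituting back yields $(r-1)P_r - S_r + 1 = M_r$, as desired.

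I do not anticipate a genuine obstacle here: the only real work is the bookkeeping in the identity $S_r = m_r S_{r-1} + P_{r-1}$, which is a one-line combinatorial observation about which terms of the symmetric-function-like sum $S_r$ contain the factor $m_r$. As an optional sanity check I would also confirm the formula against the $r=3$ value computed in the proof of Proposition~\ref{prop: poly prod for 0-skeleton}, namely $N_3 = m_3 N_2 + (m_3-1)(m_1 m_2 - 1)$, which should equal $2 m_1 m_2 m_3 - m_2 m_3 - m_1 m_3 - m_1 m_2 + 1$; expanding both sides agrees. This completes the proof.
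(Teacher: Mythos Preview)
Your argument is correct and follows essentially the same route as the paper: an induction on $r$ that checks the base case $r=2$ and then verifies that the closed-form expression satisfies the recurrence $N_{r}=m_{r}N_{r-1}+(m_{r}-1)\bigl(\prod_{i=1}^{r-1}m_i-1\bigr)$. Your presentation is in fact more explicit, since the paper simply says ``substituting this value for $N_n$ and rearranging the terms'' at the inductive step, whereas you spell out the key identity $S_r=m_rS_{r-1}+P_{r-1}$.
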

\begin{proof} The proof follows by induction on $r$. For $r=2$ then $(m_1-1)(m_2-1)=m_1m_2 -(m_1+m_2)+1$. Now assume this is true for $r=n$, then for $r=n+1$ it follows from Lemma \ref{htpy.gps.Zk} that
\[N_{n+1}=m_{n+1}N_{n}+(m_{n+1}-1)(\prod_{i=1}^{n} {m_i} -1),\] 
where by assumption $N_n$ equals
\[N_n=(n-1)\prod_{i=1}^n m_i - \sum_{i=1}^n (\prod_{j\neq i} m_j)+1.\]
Substituting this value for $N_n$ and rearranging the terms shows that
\[N_{n+1}= n\prod_{i=1}^{n+1} m_i - \sum_{i=1}^{n+1} (\prod_{j\neq i} m_j)+1.\]
\end{proof}

\begin{thm}\label{thm: htpy type of Z_K_0(EG,G)}
Let $G_1,\dots,G_r$ be finite discrete groups with $|G_i|=m_i$, for all $i$, and $K$ be a discrete set of $r$ points. Then there is a homotopy equivalence
$$
Z_K(\underline{EG},\underline{G}) \simeq \bigvee_{N_r}S^1,
$$ 
where $N_r$ is given in Corollary \ref{cor: fibre rank}.
\end{thm}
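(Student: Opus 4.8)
The plan is to assemble the statement from the three preceding results, which between them do all the work. First I would invoke Lemma~\ref{lemma: rel homotopy equiv} for each index $i$: since every $G_i$ is a finite discrete group of order $m_i$, there is a relative homotopy equivalence of $CW$-pairs $(EG_i, G_i) \sim (I, F_i)$, where $F_i \subset I = [0,1]$ is a discrete subset with $|F_i| = m_i$, and we may arrange $F_i = \{0 = f_1^i < \cdots < f_{m_i}^i = 1\}$. This gives a relative homotopy equivalence of sequences of pairs $(\underline{EG}, \underline{G}) \sim (\underline{I}, \underline{F})$.

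Next I would use the fact, recorded just before Lemma~\ref{lemma: rel homotopy equiv} (and observed in \cite[Section 2.2]{denham}), that for a fixed simplicial complex $K$ the polyhedral product is a homotopy functor: its homotopy type depends only on the relative homotopy type of the input pairs. Applying this to the relative equivalence from the previous paragraph yields
\[
Z_K(\underline{EG}, \underline{G}) \simeq Z_K(\underline{I}, \underline{F}),
\]
with $|F_i| = m_i$ for $i = 1, \dots, r$, exactly as in the remark following Lemma~\ref{lemma: rel homotopy equiv}.

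Now $K$ is a discrete set of $r$ points, so Proposition~\ref{prop: poly prod for 0-skeleton} applies verbatim to $Z_K(\underline{I}, \underline{F})$ and gives a homotopy equivalence $Z_K(\underline{I}, \underline{F}) \simeq \bigvee_{N_r} S^1$, where $N_r$ is the integer defined there by the recursion $N_2 = (m_1 - 1)(m_2 - 1)$ and $N_r = m_r N_{r-1} + (m_r - 1)(\prod_{i=1}^{r-1} m_i - 1)$ for $r \geq 3$. Composing the two equivalences yields $Z_K(\underline{EG}, \underline{G}) \simeq \bigvee_{N_r} S^1$. Finally, Corollary~\ref{cor: fibre rank} rewrites this recursion in closed form as
\[
N_r = (r-1)\prod_{i=1}^r m_i - \sum_{i=1}^r \Bigl(\prod_{j \neq i} m_j\Bigr) + 1,
\]
which is the value asserted in the statement. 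There is no real obstacle here beyond the bookkeeping already carried out in the lemmas and corollary cited; the only point that warrants a sentence of care is the appeal to homotopy functoriality of $Z_K(-,-)$ in $CW$-pairs, to ensure that the componentwise relative equivalences $(EG_i, G_i) \sim (I, F_i)$ really do induce an honest homotopy equivalence on polyhedral products.
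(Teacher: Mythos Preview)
Your proposal is correct and follows essentially the same approach as the paper: the paper's proof is the single line ``Follows from Proposition~\ref{prop: poly prod for 0-skeleton} and Corollary~\ref{cor: fibre rank}'', and your argument simply unpacks this by making explicit the appeal to Lemma~\ref{lemma: rel homotopy equiv} and the homotopy functoriality of $Z_K(-,-)$ that the paper records just before that lemma. There is nothing to add or correct.
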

\begin{proof}
Follows from Proposition \ref{prop: poly prod for 0-skeleton} and Corollary \ref{cor: fibre rank}
\end{proof}

Consider the Denham-Suciu fibration (\ref{D-S-fibration}) for $K$ the 0-skeleton, to get the following fibration
\[
Z_K(\underline{EG},\underline{G}) \simeq \bigvee_{N_n} S^1 \longrightarrow \bigvee_{1\leq i\leq n} {BG_i} \longrightarrow {BG_1}\times \cdots \times BG_n.
\]
Each of the spaces in the fibration is an Eilenberg-Mac Lane space, hence there is a short exact sequence of groups
\[
1 \longrightarrow F[x_1,\dots,x_{N_n}] \longrightarrow G_1 \ast \cdots \ast G_n \longrightarrow G_1\times \cdots \times G_n \longrightarrow 1.
\]
The rank of the free group in the kernel is $N_n$, which is given in Corollary \ref{cor: fibre rank}. This rank was also computed algebraically in an early paper of J. Nielsen \cite{nielsen}. Thus Proposition \ref{prop: poly prod for 0-skeleton} gives a topological proof of that same result.

\subsection{{Flag complexes}}

Suppose $K$ is a flag complex. If $G$ is a non-trivial discrete group then the space $Z_K(BG,\ast)$ is a $K(\pi,1)$, see \cite[Example 1.6]{davis.okun}. By a short argument in Lemma \ref{lemma: no flag no K(pi,1)}, the converse of this statement is also true. 

\begin{lemma}\label{lemma: no flag no K(pi,1)}
If $K$ is not a flag complex, then $Z_K(EG,G)$ is not an Eilenberg-Mac Lane space.
\end{lemma}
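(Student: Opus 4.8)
The plan is to use a retraction to reduce to the single model complex $K=\partial\Delta^{k-1}$, and then to settle that case by combining an explicit homology computation with the (less obvious) fact that the polyhedral product is simply connected there.

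First I would exploit that, since $K$ is not a flag complex, it contains a minimal non-face; let $I\subseteq[n]$ be its vertex set, so $k:=|I|\geq 3$ and the full subcomplex $K_I$ equals $\partial\Delta^{k-1}$, a triangulation of $S^{k-2}$. The coordinate projection onto the factors indexed by $I$ restricts to a map $Z_K(EG,G)\to Z_{K_I}(EG,G)=Z_{\partial\Delta^{k-1}}(EG,G)$, and this map is split by the inclusion that fills the missing coordinates with the basepoint of $G\subseteq EG$. Hence $Z_{\partial\Delta^{k-1}}(EG,G)$ is a retract of $Z_K(EG,G)$, and $\pi_q\bigl(Z_{\partial\Delta^{k-1}}(EG,G)\bigr)$ is a retract of $\pi_q\bigl(Z_K(EG,G)\bigr)$ for each $q$. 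It therefore suffices to produce a non-trivial element of $\pi_q\bigl(Z_{\partial\Delta^{k-1}}(EG,G)\bigr)$ for some $q\geq 2$.

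Next I would show that $Z_{\partial\Delta^{k-1}}(EG,G)$ is simply connected. The $1$-skeleton of $\partial\Delta^{k-1}$ is the complete graph on $k$ vertices, whose graph product of copies of $G$ is the direct product $G^k$, so Proposition~\ref{fun.gp.Zk} gives $\pi_1\bigl(Z_{\partial\Delta^{k-1}}(BG)\bigr)\cong G^k$. Now $Z_{\partial\Delta^{k-1}}(BG)$ is the fat wedge $\{(y_1,\dots,y_k)\in(BG)^k:\ y_j=\ast\ \text{for some}\ j\}$, whose maximal cells $D(\sigma)$ are indexed by the $(k-1)$-subsets $\sigma$, and under its inclusion into $(BG)^k$ the loop coming from the $i$-th vertex (a generator of $\pi_1$ of the $i$-th coordinate copy of $BG$) goes to the $i$-th coordinate generator of $\pi_1((BG)^k)=G^k$. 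Since these vertex loops generate $\pi_1\bigl(Z_{\partial\Delta^{k-1}}(BG)\bigr)$ by the Seifert-van Kampen theorem, the induced map $G^k\to G^k$ is the identity; feeding this into the exact sequence of Lemma~\ref{htpy.gps.Zk}(3) for $K=\partial\Delta^{k-1}$ forces $\pi_1\bigl(Z_{\partial\Delta^{k-1}}(EG,G)\bigr)=1$.

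It then remains to compute the homology. By Lemma~\ref{lemma: rel homotopy equiv} --- and, when $G$ is infinite, by the invariance of $Z_K$ under relative homotopy equivalence of the pair --- one has $Z_{\partial\Delta^{k-1}}(EG,G)\simeq Z_{\partial\Delta^{k-1}}(I,F)$ with $I=[0,1]$ and $F=\{0=f_1<\cdots<f_m=1\}$, $m=|G|\geq 2$. Here $Z_{\partial\Delta^{k-1}}(I,F)=\{(t_1,\dots,t_k)\in I^k:\ t_j\in F\ \text{for some}\ j\}$ is exactly the union of the coordinate walls of the grid subdividing the cube $I^k$ into $(m-1)^k$ small boxes, so $I^k/Z_{\partial\Delta^{k-1}}(I,F)$ is a wedge of $(m-1)^k$ copies of $S^k$; since $I^k$ is contractible, the long exact sequence of the pair yields $\widetilde{H}_j\bigl(Z_{\partial\Delta^{k-1}}(I,F)\bigr)\cong\Z^{(m-1)^k}$ for $j=k-1$ and $0$ otherwise. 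Being simply connected with reduced homology concentrated in degree $k-1\geq 2$, the Hurewicz theorem gives $\pi_{k-1}\bigl(Z_{\partial\Delta^{k-1}}(EG,G)\bigr)\cong\Z^{(m-1)^k}\neq 0$, hence $\pi_{k-1}\bigl(Z_K(EG,G)\bigr)\neq 0$ with $k-1\geq 2$, and $Z_K(EG,G)$ is not an Eilenberg-Mac Lane space. The step I expect to be the real obstacle is the simple connectivity of $Z_{\partial\Delta^{k-1}}(EG,G)$: one has to pass through Proposition~\ref{fun.gp.Zk} and carefully track which $\pi_1$-generators correspond to which vertices in order to conclude that the fat-wedge inclusion induces an isomorphism on fundamental groups and hence has simply connected homotopy fibre. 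By contrast, the reduction by retraction is formal, and once the pair $(EG,G)$ is traded for $([0,1],F)$ the homology computation is routine.
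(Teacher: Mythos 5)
Your argument is correct, and it is a genuinely different (and considerably more detailed) route than the paper's. The paper's proof is a two-line observation: the inclusion of pairs $([0,1],\{0,1\})\hookrightarrow (EG,G)$ induces an embedding $Z_{\sigma}([0,1],\{0,1\})\simeq S^{k-1}\hookrightarrow Z_K(EG,G)$ for a minimal non-face $\sigma$ on $k\geq 3$ vertices, and the nontriviality of this sphere in higher homotopy is asserted rather than argued. You instead make the detecting mechanism explicit: the coordinate projection/basepoint-inclusion retraction onto $Z_{K_I}(EG,G)=Z_{\partial\Delta^{k-1}}(EG,G)$, simple connectivity of that retract via Proposition \ref{fun.gp.Zk} and the exact sequence of Lemma \ref{htpy.gps.Zk} (the graph product over the complete graph being the direct product, so the projection in the exact sequence is an isomorphism), the grid model $Z_{\partial\Delta^{k-1}}(I,F)$ with $I^k/Z\simeq\bigvee_{(m-1)^k}S^k$, and Hurewicz. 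What this buys is strictly more information: you identify the first nonvanishing higher homotopy group of the retract as $\Z^{(m-1)^k}$ in degree $k-1$, and you supply exactly the justification (retract plus homological detection) that the paper's ``therefore'' leaves implicit. Incidentally, the simple connectivity of $Z_{\partial\Delta^{k-1}}(I,F)$ can be seen more cheaply without the fibration: $I^k$ is obtained from this space by attaching the $(m-1)^k$ closed boxes, which are cells of dimension $k\geq 3$, so $\pi_1$ agrees with $\pi_1(I^k)=1$.

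One caveat to repair: the lemma (and the theorem it feeds) is stated for arbitrary non-trivial discrete $G$, but your homology computation relies on Lemma \ref{lemma: rel homotopy equiv}, which requires $G$ finite; for infinite discrete $G$ there is no relative model $(I,F)$ with $F$ a discrete subspace of $[0,1]$ of the homotopy type of $G$, so the parenthetical appeal to invariance under relative homotopy equivalence does not produce one. This is easily fixed: either replace $(I,F)$ by the cone pair $(CG,G)$, for which $(CG)^k/Z_{\partial\Delta^{k-1}}(CG,G)\cong (CG/G)^{\wedge k}$ is a wedge of $k$-spheres and the same long exact sequence argument applies, or do what the paper does and precompose your retraction with the subpair $([0,1],\{0,1\})\hookrightarrow(EG,G)$, which exhibits a $(k-1)$-sphere carrying a nonzero class in $H_{k-1}$ of the simply connected retract for any $G$ with at least two elements.
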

\begin{proof}
There is an inclusion of pairs $([0,1],\{0,1\})\hookrightarrow (EG,G)$. If $K$ is not a flag complex, then $K$ contains a minimal non-face $\sigma \subseteq K$ with $k$ vertices, where $k\geq 3$. Then there is an embedding 
$$
Z_{\sigma}([0,1],\{0,1\})\hookrightarrow Z_K(EG,G).
$$
There is an equivalence $Z_{\sigma}([0,1],\{0,1\})\simeq S^{k-1}$. Therefore, $Z_K(EG,G)$ has non-trivial higher homotopy groups.
\end{proof}

\begin{rmk}
If $K$ is a flag complex, then $Z_K(\underline{EG},\underline{G})$ is the classifying space of the kernel of the projection 
\[\prod_{SK_1} G_i \longontoright \prod_{i=1}^n G_i.\]
Hence, if $G_1,\dots,G_n$ are finite groups, then Lemma \ref{lemma: no flag no K(pi,1)} shows that the kernel of this projection is torsion free, since $Z_K(\underline{EG},\underline{G})=K(\pi_1(Z_K(\underline{EG},\underline{G})),1)$ is of finite type by Lemma \ref{lemma: rel homotopy equiv}.
\end{rmk}

The following is an immediate corollary, which also proves Theorem \ref{thm: Z_K is an E-M space iff K flag INTRO}.

\begin{thm}\label{main.E-B.space}
Let $G_1,\dots , G_n$ be non-trivial discrete groups and $K$ be a simplicial complex with $n$ vertices. Then  $Z_K({\underline{BG}})$ is an Eilenberg-Mac Lane space if and only if $K$ is a flag complex. Equivalently, $Z_K(\underline{EG},\underline{G})$ is an Eilenberg-Mac Lane space if and only if $K$ is a flag complex.
\end{thm}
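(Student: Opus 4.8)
The plan is to derive Theorem \ref{main.E-B.space} directly from the pieces already assembled in this section, so almost no new work is needed. The key observation is the dichotomy furnished by the structure theory: on the one hand, if $K$ \emph{is} a flag complex, then $Z_K(\underline{BG})$ (equivalently, by Lemma \ref{htpy.gps.Zk}(2), the fibre $Z_K(\underline{EG},\underline{G})$) is known to be aspherical by \cite[Example 1.6]{davis.okun}; on the other hand, if $K$ is \emph{not} flag, Lemma \ref{lemma: no flag no K(pi,1)} produces a nontrivial higher homotopy group in $Z_K(\underline{EG},\underline{G})$, which by Lemma \ref{htpy.gps.Zk}(2) transfers to the same nontrivial higher homotopy group of $Z_K(\underline{BG})$. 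So the whole statement is the conjunction of these two implications, and the equivalence between the $Z_K(\underline{BG})$ formulation and the $Z_K(\underline{EG},\underline{G})$ formulation is exactly Lemma \ref{htpy.gps.Zk}(1)--(2) applied to the Denham--Suciu fibration (\ref{D-S-fibration}).

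First I would record that $Z_K(\underline{BG})$ is always path-connected (Lemma \ref{htpy.gps.Zk}(1)) and has $\pi_q(Z_K(\underline{BG}))\cong\pi_q(Z_K(\underline{EG},\underline{G}))$ for all $q\ge 2$ (Lemma \ref{htpy.gps.Zk}(2)), so ``$Z_K(\underline{BG})$ is Eilenberg--Mac Lane'' is equivalent to ``$\pi_q(Z_K(\underline{EG},\underline{G}))=0$ for all $q\ge2$'', and likewise for $Z_K(\underline{EG},\underline{G})$ itself (its lower homotopy is irrelevant to asphericity, and one should note $Z_K(\underline{EG},\underline{G})$ is also path-connected, or simply phrase the statement in terms of vanishing of higher homotopy). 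This reduces both halves of the ``if and only if'' to a single claim about the fibre. Next, for the ``if'' direction I would cite \cite[Example 1.6]{davis.okun}: when $K$ is flag, $Z_K(\underline{BG})$ is a $K(\pi,1)$, hence its higher homotopy vanishes, hence so does that of the fibre. For the ``only if'' direction I would invoke Lemma \ref{lemma: no flag no K(pi,1)} verbatim: non-flag $K$ contains a minimal non-face $\sigma$ on $k\ge3$ vertices, the sub-polyhedral-product $Z_\sigma([0,1],\{0,1\})\simeq S^{k-1}$ embeds in $Z_K(EG,G)$ (this uses non-triviality of the $G_i$ to guarantee the pair $([0,1],\{0,1\})\hookrightarrow(EG_i,G_i)$, via Lemma \ref{lemma: rel homotopy equiv}), and this sphere witnesses a nonzero element of $\pi_{k-1}$.

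There is essentially no main obstacle; the one point requiring a little care is making sure Lemma \ref{lemma: no flag no K(pi,1)} as stated for a single group $G$ applies in the multi-group setting $(\underline{EG},\underline{G})=\{(EG_i,G_i)\}$, which it does because the retraction argument only needs each pair $(EG_i,G_i)$ to admit $([0,1],\{0,1\})$ as a relative subcomplex, and that holds for any non-trivial $G_i$ by Lemma \ref{lemma: rel homotopy equiv} (since $|G_i|\ge2$). I would therefore simply state the proof as: combine Lemma \ref{htpy.gps.Zk} (to pass between $Z_K(\underline{BG})$ and $Z_K(\underline{EG},\underline{G})$), \cite[Example 1.6]{davis.okun} (flag $\Rightarrow$ aspherical), and Lemma \ref{lemma: no flag no K(pi,1)} (not flag $\Rightarrow$ not aspherical), which is precisely the one-line proof already given in the excerpt.
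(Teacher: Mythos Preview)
Your proposal is correct and follows exactly the same approach as the paper: the paper's proof is the single sentence ``This follows from \cite{davis.okun}, Proposition \ref{lemma: no flag no K(pi,1)}, and the Denham-Suciu fibration,'' which is precisely the three-ingredient combination you spell out (with Lemma \ref{htpy.gps.Zk} being the immediate consequence of the Denham--Suciu fibration). Your added remark about extending Lemma \ref{lemma: no flag no K(pi,1)} to the multi-group setting is a fair point the paper leaves implicit; note, though, that Lemma \ref{lemma: rel homotopy equiv} is stated only for finite $G$, whereas here the $G_i$ are arbitrary non-trivial discrete groups, so the inclusion $([0,1],\{0,1\})\hookrightarrow(EG_i,G_i)$ should be justified directly (choose two orbit points and a path in $EG_i$) rather than by citing that lemma.
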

\begin{proof}
This follows from \cite{davis.okun}, Proposition \ref{lemma: no flag no K(pi,1)}, and the Denham-Suciu fibration.
\end{proof}

\section{Homotopy groups of other polyhedral products }\label{sec: htpy groups other Z_K}

\

Suppose $X$ is a path-connected, finite dimensional and pointed $CW$-complex. Then it is a classical result that $X$ is the classifying space of a topological group $G$, which can be described precisely, see for example \cite[Theorem 5.2]{milnor56b}. Hence, we can write $X\simeq BG$. This gives a homotopy equivalence $\Omega X \simeq G$, which implies an equivalence between the cone of the spaces $CG \simeq C(\Omega X)$. Let $\ast$ be the basepoint of $EG$. There is a commutative diagram of spaces
\begin{center}
\begin{tikzcd}
EG\times C(G) \arrow{r}{\simeq} & C(\Omega X) \\
\ast \times G \arrow{r}{\simeq} \arrow[hookrightarrow]{u} & \Omega X. \arrow[hookrightarrow]{u}
\end{tikzcd}
\end{center}
Hence, there is a homotopy equivalence of pairs $(EG,G)\simeq (C\Omega X, \Omega X)$. The Denham-Suciu fibration (\ref{D-S-fibration}) gives us 
\[Z_K(EG,G) \longrightarrow Z_K(BG,\ast) \longrightarrow BG^n,\]
which can also be written as 
\[Z_K(C\Omega X,\Omega X) \longrightarrow Z_K(X,\ast) \longrightarrow X^n,\]
because of the equivalence $(EG,G)\simeq (C\Omega X, \Omega X)$. This is an instance of the more general case of a sequence of $CW$-pairs $(\underline{X},\underline{\ast})$. Hence, there is a fibration
\[Z_K(\underline{ C\Omega X},\underline{\Omega X}) \longrightarrow Z_K(\underline{X},\underline{\ast}) \longrightarrow \prod_{i=1}^n X_i.\]
Assume that $G$ is path-connected. That means $\pi_0(G)=1$ and there are isomorphisms $\pi_1(X)=\pi_0(\Omega X)=\pi_0(G)=1$. That is, assume that $X$ is 1-connected. We want to prove that if $X$ is 1-connected, then $Z_K(EG,G) $ is 1-connected. 
%
\begin{thm}\label{thm: 1-connected polyhedral prod}
Let $X_1,\dots,X_n$ be 1-connected $CW$-complexes. Then the polyhedral product 
$Z_K(\underline{C \Omega X},\underline{\Omega X})$ is 1-connected.
\end{thm}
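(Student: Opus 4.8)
The plan is to exhibit $Z_K(\underline{C\Omega X},\underline{\Omega X})$ as a colimit of a diagram of spaces built from the $D(\sigma)$, each of which is $1$-connected, and then push the connectivity through the colimit using van Kampen-type gluing. First I would observe that for each $\sigma \in K$ the space $D(\sigma) = Y_1 \times \cdots \times Y_n$ with $Y_i = C\Omega X_i$ for $i \in \sigma$ and $Y_i = \Omega X_i$ for $i \notin \sigma$. Each cone $C\Omega X_i$ is contractible, hence $1$-connected; each loop space $\Omega X_i$ is connected with $\pi_1(\Omega X_i) \cong \pi_2(X_i)$, so $\Omega X_i$ need \emph{not} be simply connected. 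This is the crux of the difficulty: the pieces $D(\sigma)$ are in general only connected, not $1$-connected, so one cannot merely quote that a colimit of simply connected spaces along connected intersections is simply connected. The key point that rescues the argument is that every vertex $\{i\} \in K$ (since $K$ is a simplicial complex on $n$ vertices, each singleton is a simplex, assuming no ghost vertices), so that the subspace with $Y_i = C\Omega X_i$ and all other factors equal to the loop spaces already sits inside $Z_K$, and these cone directions can be used to kill loops in the $\Omega X_i$ factors.

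More concretely, I would argue as follows. Let $CW$ be the full product $\prod_i C\Omega X_i$, which is contractible, and note $Z_K(\underline{C\Omega X},\underline{\Omega X}) \subseteq \prod_i C\Omega X_i$. Consider the subcomplex corresponding to the $0$-skeleton $SK_0$ (the $n$ vertices): $Z_{SK_0}(\underline{C\Omega X},\underline{\Omega X})$ is the generalized wedge $\bigvee_{i=1}^n C\Omega X_i$ where the $i$-th wedge summand $C\Omega X_i \times \prod_{j \neq i}\Omega X_j$ carries a contractible factor in position $i$ — in fact, since $C\Omega X_i$ is contractible, each such piece deformation retracts onto a point, so one should be more careful: the relevant subspace is $\bigcup_{i}\big(C\Omega X_i \times \prod_{j\neq i}\Omega X_j\big)$, the polyhedral product over the $0$-skeleton. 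I claim this space is already $1$-connected: a loop in it is a loop in some finite union of these pieces, and using the cone coordinate in each piece (the cone $C\Omega X_i$ provides a nullhomotopy of the $\Omega X_i$ coordinate), van Kampen lets one contract it. Then I would show that $Z_K$ is obtained from $Z_{SK_0}$ by attaching the remaining $D(\sigma)$ for $\sigma$ of dimension $\geq 1$ along connected subspaces, and invoke van Kampen inductively over the simplices of $K$: at each stage we glue a $1$-connected piece $D(\sigma)$ (it has at least one cone factor, since $\sigma \neq \varnothing$ contributes $C\Omega X_i$ which makes $D(\sigma)$ contractible) to the partial union along a connected subspace, which does not create new $\pi_1$.

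Alternatively — and this may be the cleaner route — I would use the Denham--Suciu fibration established just above in the excerpt,
\[
Z_K(\underline{C\Omega X},\underline{\Omega X}) \longrightarrow Z_K(\underline{X},\underline{\ast}) \longrightarrow \prod_{i=1}^n X_i,
\]
together with the long exact sequence in homotopy. From it one gets
\[
\pi_2\Big(\prod_i X_i\Big) \to \pi_1\big(Z_K(\underline{C\Omega X},\underline{\Omega X})\big) \to \pi_1\big(Z_K(\underline{X},\underline{\ast})\big) \to \pi_1\Big(\prod_i X_i\Big).
\]
Since each $X_i$ is $1$-connected, $\pi_1(\prod_i X_i) = 1$, so it remains to show the base-to-fiber connecting map $\pi_2(\prod_i X_i) \to \pi_1(\text{fiber})$ is surjective and that $\pi_1(Z_K(\underline{X},\underline{\ast})) = 1$. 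For the latter: $Z_K(\underline{X},\underline{\ast})$ is built from the $D(\sigma)$ which are products of the $X_i$ and basepoints, hence $1$-connected, glued along connected subspaces, so another van Kampen argument (identical in spirit to Proposition~\ref{fun.gp.Zk}, but now with trivial vertex groups since each $X_i$ is simply connected) gives $\pi_1(Z_K(\underline{X},\underline{\ast})) = 1$. Then exactness forces $\pi_1(Z_K(\underline{C\Omega X},\underline{\Omega X}))$ to be a quotient of the abelian group $\pi_2(\prod_i X_i)$, hence abelian, and one checks it is in fact trivial by noting the section $\prod_i X_i \simeq \prod_i BG_i \to Z_K(\underline{X},\underline{\ast})$ coming from the vertices splits the sequence so $\pi_1$ of the total space surjects onto $\pi_1$ of the base $= 1$ with kernel $\pi_1(\text{fiber})$, and comparing with the contractible ambient $\prod_i C\Omega X_i \to \prod_i X_i$ shows the fiber inclusion is $\pi_1$-trivial. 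The main obstacle I anticipate is handling the connecting homomorphism cleanly — equivalently, making the van Kampen induction precise when the gluing loci are only connected and the building blocks involve noncompact cone factors; I would isolate this as the key lemma and prove it by the explicit "use the cone coordinate to slide loops to the basepoint" argument sketched above.
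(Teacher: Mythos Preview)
Your first approach contains a contradiction that is a genuine gap: you correctly observe early on that the pieces $D(\sigma)$ are in general only connected (since $\pi_1(D(\sigma)) \cong \prod_{i \notin \sigma} \pi_2(X_i)$), but then later claim that a single cone factor ``makes $D(\sigma)$ contractible''---it does not; it only makes $D(\sigma)$ homotopy equivalent to $\prod_{i \notin \sigma} \Omega X_i$. The paper's proof is essentially your first approach done correctly: it runs Seifert--van Kampen over the simplices of $K$, but instead of claiming the pieces are simply connected it tracks the fundamental group explicitly. At each stage the partial union $\bigcup_{j \le l} D(\tau_j)$ has $\pi_1 \cong \pi_1(G)^{|V|}$, where $V$ is the set of vertices not lying in any $\tau_j$ used so far (the $EG$/cone coordinate at each covered vertex kills that factor in the amalgamated product). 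Once every simplex---in particular every vertex $\{1\},\dots,\{n\}$---has been used, $V = \varnothing$ and $\pi_1 = 1$.

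Your second route via the Denham--Suciu fibration is genuinely different from the paper's argument and can be made to work, but the final step is wrong as stated: there is no section $\prod_i X_i \to Z_K(\underline{X},\underline{\ast})$ of spaces unless $K$ is the full simplex, since the images of the vertex inclusions $X_i \hookrightarrow Z_K(\underline{X},\underline{\ast})$ need not commute. What you actually need (and what your vertex idea does give) is surjectivity of $\pi_2(Z_K(\underline{X},\underline{\ast})) \to \pi_2\big(\prod_i X_i\big)$: each composite $X_i \hookrightarrow Z_K(\underline{X},\underline{\ast}) \to \prod_j X_j \to X_i$ is the identity, and since $\pi_2$ is abelian the individual maps $\pi_2(X_i) \to \pi_2(Z_K(\underline{X},\underline{\ast}))$ assemble into a splitting of $\pi_2(Z_K) \to \bigoplus_i \pi_2(X_i)$. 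Exactness then forces $\pi_1$ of the fibre to vanish. With this correction, the fibration argument is a clean alternative to the paper's direct van Kampen computation, at the cost of needing $\pi_1(Z_K(\underline{X},\underline{\ast})) = 1$ as input---which, as you note, follows from the easy version of the same induction with simply connected building blocks.
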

\begin{proof}
We prove this for $X_1=\cdots =X_n=X$, then the general proof is the same. As mentioned above, if $X$ is a 1-connected $CW$-complex, then $X\simeq BG$ and $G$ is path-connected, see \cite{milnor56b}. Moreover, there is an equivalence 
$$Z_K(C \Omega X,\Omega X) \simeq Z_K(EG,G),$$
where $EG$ and $G$ are path-connected. Hence, $Z_K(EG,G)$ is path-connected. In this proof we will work with $Z_K(EG,G)$.

To show that $\pi_1(Z_K(EG,G))=0$, the definition of the polyhedral product will be used. Recall that 
\[Z_K(EG,G)=\underset{{\sigma \in K}}{\colim} D(\sigma),\]
where $D(\sigma)$ is a product of $EG$'s and $G$'s. Also recall that $G\times \cdots \times G \subset D(\sigma)$, for all $\sigma \in K$, where $G\times \cdots \times G =D(\varnothing)$. For two simplices $\tau, \sigma \in K$ consider the pushout diagram
\begin{center}
\begin{tikzcd}
D(\tau) \cap D(\sigma) \arrow{r}{i} \arrow{d}[swap]{j} & D(\tau) \arrow{d} \\
D(\sigma) \arrow{r} & D(\tau) \cup_{D(\tau) \cap D(\sigma)} D(\sigma),
\end{tikzcd}
\end{center}
where $D(\tau) \cup_{D(\tau) \cap D(\sigma)} D(\sigma)$ is the colimit of the two maps emanating from the intersection $D(\tau) \cap D(\sigma)$. Using Seifert-van Kampen theorem for the fundamental group, it follows that 
\[\pi_1(D(\tau) \cup_{D(\tau) \cap D(\sigma)} D(\sigma))=\pi_1(D(\tau)) \ast_N \pi_1(D(\sigma)),\]
where $N$ is the subgroup generated by the images of the fundamental group of the intersection under the induced maps of $i$ and $j$ in $\pi_1$. Let the set $V_{\sigma, \tau}=\{v_1,\dots,v_t\}$ be the maximal set of vertices in $K$ such that $V_{\sigma, \tau}\cap \sigma = \varnothing$ and $V_{\sigma, \tau}\cap \tau = \varnothing$.  Clearly, $\pi_1(D(\tau)) \ast_N \pi_1(D(\sigma))=\pi_1(G^t)$, since $i$ and $j$ induce monomorphisms in $\pi_1$ and the images of these two maps do not hit the coordinates corresponding to the vertices in $V_{\sigma, \tau}$.

$K$ contains only a finite number of simplices $\tau_1,\dots,\tau_k$. Let $V_{i_1,\dots,i_l}$ be the maximal set of vertices in $K$ such that $V_{i_1,\dots,i_l} \cap \tau_{i_j}=\varnothing$ for $1 \leq j \leq l \leq k$. As explained above
\[\pi_1(D(\tau_1) \cup_{D(\tau_1)\cap D(\tau_2)} D(\tau_2))=\pi_1(D(\tau_1)) \ast_{N_1} \pi_1(D(\tau_2))=\pi_1(G^{|V_{1,2}|}).\]
To complete the proof, first perform the computation by taking the colimit with more simplices until all simplices $\tau_1,..,\tau_{k-1}$ are used. It follows that
\[\pi_1(\underset{{1 \leq i \leq k-1}}{\colim} D(\tau_i))=\pi_1(G^{|V_{1,\dots,k-1}|}).  \]
In the last step, the polyhedral product equals
\[Z_K(EG,G) = \underset{{1 \leq i \leq k-1}}{\colim} D(\tau_i) \cup_{\underset{{1 \leq i \leq k-1}}{\colim} D(\tau_i) \cap D(\tau_k)} D(\tau_k).  \]
Thus, the fundamental group equals
\[\pi_1(\underset{{1 \leq i \leq k-1}}{\colim} D(\tau_i) \cup_{\underset{{1 \leq i \leq k-1}}{\colim} D(\tau_i) \cap D(\tau_k)} D(\tau_k))=\pi_1(\underset{{1 \leq i \leq k-1}}{\colim} D(\tau_i)) \ast_{N_{k-1}} \pi_1(D(\tau_k)). \]
Since $V_{1,\dots,k}=\varnothing$, it follows that $\pi_1(Z_K(EG,G))=0$.
\end{proof}

Now using Hurewicz's theorem, we can describe the first non-vanishing homotopy group of 
$Z_K(\underline{C \Omega X},\underline{\Omega X})$.

\begin{prop}\label{prop: 2nd homotopy group, special case}
Let $X_1,\dots,X_n$ be 1-connected $CW$-complexes and $K$ be a simplicial complex with $n$ vertices. If $Z_K(\underline{C \Omega X},\underline{\Omega X})$ is $(k-1)$-connected, then the first non-vanishing homotopy group is given by 
$$\pi_k( Z_K(\underline{C \Omega X},\underline{\Omega X})) \cong \bigoplus_{I \subset \{1,\dots,n\}}H_k(|K_I| \wedge \widehat{\underline{\Omega X}}^I; \Z).$$
\end{prop}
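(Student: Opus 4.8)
The plan is to combine the stable splitting of Bahri--Bendersky--Cohen--Gitler with the Hurewicz theorem. By Theorem \ref{thm: 1-connected polyhedral prod}, the space $Z_K(\underline{C\Omega X},\underline{\Omega X})$ is simply connected; assuming in addition that it is $(k-1)$-connected, the Hurewicz isomorphism gives
\[\pi_k(Z_K(\underline{C\Omega X},\underline{\Omega X})) \cong H_k(Z_K(\underline{C\Omega X},\underline{\Omega X});\Z).\]
Since $H_k$ of a space agrees with reduced $H_{k+1}$ of its suspension, I would pass to $\Sigma Z_K(\underline{C\Omega X},\underline{\Omega X})$ and apply the stable decomposition \cite[Theorem 2.10]{cohen.macs}, which gives a homotopy equivalence
\[\Sigma Z_K(\underline{C\Omega X},\underline{\Omega X}) \simeq \Sigma\Big(\bigvee_{I\subseteq[n]} \widehat{Z}_{K_I}(\underline{C\Omega X}_I,\underline{\Omega X}_I)\Big),\]
so that $\widetilde H_k(Z_K(\underline{C\Omega X},\underline{\Omega X});\Z) \cong \bigoplus_{I\subseteq[n]} \widetilde H_k(\widehat{Z}_{K_I}(\underline{C\Omega X}_I,\underline{\Omega X}_I);\Z)$.

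Next I would identify each smash polyhedral product summand. For the pair $(C\Omega X_i, \Omega X_i)$, the space $C\Omega X_i$ is contractible, so $\widehat{Z}_{K_I}(\underline{C\Omega X}_I,\underline{\Omega X}_I)$ is a based subspace of the smash $\bigwedge_{i\in I} C\Omega X_i$, built from the faces of $K_I$; collapsing the (contractible) cone coordinates, one recognizes this as the half-smash / the space $|K_I| \ltimes \widehat{\underline{\Omega X}}^I$ up to homotopy, more precisely there is an equivalence
\[\widehat{Z}_{K_I}(\underline{C\Omega X}_I,\underline{\Omega X}_I) \simeq |K_I| \wedge \widehat{\underline{\Omega X}}^I\]
after one suspension (this is the standard identification used in \cite{cohen.macs} for cone pairs; it identifies the smash polyhedral product on $(CY,Y)$ with the geometric realization of $K_I$ smashed against the smash of the $Y_i$'s, since the combinatorics of which coordinates are ``cone'' versus ``base'' is exactly the simplicial structure of $K_I$). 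Feeding this into the homology computation yields
\[\pi_k(Z_K(\underline{C\Omega X},\underline{\Omega X})) \cong \bigoplus_{I\subseteq[n]} \widetilde H_k(|K_I|\wedge \widehat{\underline{\Omega X}}^I;\Z),\]
which is the claimed formula (noting that the $I=\varnothing$ and singleton terms contribute nothing in degree $k\geq 2$, and that in the connectivity range the unreduced and reduced groups agree).

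The main obstacle I anticipate is the careful identification of the smash polyhedral product of the cone pair $(C\Omega X_i,\Omega X_i)$ with $|K_I|\wedge\widehat{\underline{\Omega X}}^I$: one must check that contracting the cone directions is compatible with the colimit defining $\widehat Z_{K_I}$ and produces exactly the simplicial complex $|K_I|$ (not merely something with the same homotopy type of its skeleton), and that the wedge splitting is natural enough that the degree-$k$ homology really does split as the stated direct sum with no extension or convergence issues. A secondary point requiring care is justifying that the Hurewicz and suspension steps apply in the stated connectivity range simultaneously for all summands, which follows because each summand is a wedge summand of a suspension and hence the splitting is compatible with the Hurewicz map; I would make this precise by working stably throughout and only invoking Hurewicz at the very end on the original (unsuspended) space.
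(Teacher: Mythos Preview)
Your proposal is correct and follows essentially the same route as the paper's proof: Hurewicz, then the stable splitting \cite[Theorem 2.10]{cohen.macs}, then the identification of each smash polyhedral product summand for cone pairs. The identification step you flag as the ``main obstacle'' is exactly \cite[Theorem 2.19]{cohen.macs}, which gives a homotopy equivalence $\widehat{Z}_{K_I}(\underline{C\Omega X},\underline{\Omega X}) \simeq |K_I|\wedge \widehat{\underline{\Omega X}}^{I}$ directly (no extra suspension needed) whenever the first coordinate of each pair is contractible; the paper simply cites this result rather than re-deriving it.
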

\begin{proof}
From Theorem \ref{thm: 1-connected polyhedral prod} we have that $Z_K(\underline{C \Omega X},\underline{\Omega X})$ is 1-connected. Furthermore assume that the first non-vanishing homotopy group is the $k-$th homotopy group. Then from the Hurewicz theorem we have 
$$
\pi_k(Z_K(\underline{C \Omega X},\underline{\Omega X})) \cong H_k(Z_K(\underline{C \Omega X},\underline{\Omega X}),\Z).
$$
It follows from \cite[Theorem 2.10]{cohen.macs} that there is a homotopy equivalence
$$
\Sigma Z_K(\underline{C \Omega X},\underline{\Omega X}) \to \Sigma\left( \bigvee_{I\subset [n]} \widehat{Z}_{K_I}(\underline{C \Omega X},\underline{\Omega X})  \right).
$$
Since $C \Omega X_i$ is contractible for all $i$, it follows from \cite[Theorem 2.19]{cohen.macs} that there is a homotopy equivalence
$$
\widehat{Z}_{K_I}(\underline{C \Omega X},\underline{\Omega X}) \to |K_I|\wedge \widehat{\underline{\Omega X}}^{I}.
$$
Hence, it follows that there is a homotopy equivalence 
$$
\Sigma Z_K(\underline{C \Omega X},\underline{\Omega X}) \to \Sigma\left( \bigvee_{I\subset [n]} |K_I|\wedge \widehat{\underline{\Omega X}}^{I}  \right).
$$
Finally, we have isomorphisms 
$$
H_k( Z_K(\underline{C \Omega X},\underline{\Omega X})) \cong H_k\left( \bigvee_{I\subset [n]} |K_I|\wedge \widehat{\underline{\Omega X}}^{I};\Z  \right)\cong \bigoplus_{I\subset [n]}H_k(|K_I|\wedge \widehat{\underline{\Omega X}}^{I};\Z ).
$$
\end{proof}

Another question is the case when a topological group $G$ acts freely and properly discontinuously on a $CW$-complex $Y$ and $p:Y \to X=Y/G$ is a bundle projection. There is a lemma due to Denham and Suciu \cite{denham} which describes the fibre when comparing certain fibrations involving polyhedral products.
\begin{lemma}[Denham \& Suciu]
Let $p: (E,E') \to (B,B')$ be a map of pairs, such that both $p:E \to B$ and $p'=p|_{E'}: E' \to B'$ are fibrations with fibres $F$ and $F'$ respectively. Suppose that either $F=F'$ or $B=B'$. Then the product fibration $p^{\times n}: E^n \to B^n$ restricts to a fibration 
\begin{equation}\label{eqn:D.S.Lemma}
Z_K(F,F') \longrightarrow Z_K(E,E') \xrightarrow{Z_K(p)} Z_K(B,B').
\end{equation}
Moreover, if $ (F,F') \to (E,E') \to (B,B')$ is a relative bundle (with structure group $G$), and either $F=F'$ or $B=B'$, then (\ref{eqn:D.S.Lemma}) is also a bundle (with structure group $G^n    $).
\end{lemma}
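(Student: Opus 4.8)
The plan is to realise $Z_K(p)$ as a restriction of the product fibration $p^{\times n}\colon E^n\to B^n$ and then treat the hypotheses $F=F'$ and $B=B'$ separately. Throughout, write $D_E(\sigma)\subseteq E^n$ for the cube $D(\sigma)$ of Definition \ref{mac.defn} built from the pair $(E,E')$, and likewise $D_B(\sigma)$, $D_F(\sigma)$. The starting point is the elementary pointset description of a polyhedral product: for a pair $(X,A)$ with $A\subseteq X$, one has inside $X^n$
\[
Z_K(X,A)=\bigl\{(x_1,\dots,x_n)\in X^n : \{\,i : x_i\notin A\,\}\in K\bigr\},
\]
since $(x_i)\in D(\sigma)$ precisely when $\{i:x_i\notin A\}\subseteq\sigma$ and $K$ is closed under subsets. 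As $p$ is a map of pairs, it carries $D_E(\sigma)$ into $D_B(\sigma)$ for every $\sigma\in K$, so $Z_K(p)=p^{\times n}|_{Z_K(E,E')}$ indeed lands in $Z_K(B,B')$. Moreover, for $b=(b_1,\dots,b_n)\in Z_K(B,B')$ the fibre of $Z_K(p)$ over $b$ is $\{(e_i)\in\prod_i p^{-1}(b_i):\{i:e_i\notin E'\}\in K\}$; identifying $p^{-1}(b_i)$ with $F$ and $p^{-1}(b_i)\cap E'$ with $F'$, the displayed description shows this fibre is $Z_K(F,F')$ (fibre pairs being understood up to homotopy in the non-bundle case). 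It therefore remains only to verify the homotopy lifting property, resp. local triviality.

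Case $F=F'$. Read this hypothesis so that $p$ and $p'$ agree over $B'$, i.e. $p^{-1}(B')\subseteq E'$; this is automatic in the bundle case from a local trivialisation. Then if $(e_i)\in E^n$ has $p^{\times n}(e_i)=(b_i)\in Z_K(B,B')$, we get $e_i\in E'$ for every $i$ with $b_i\in B'$, whence $\{i:e_i\notin E'\}\subseteq\{i:b_i\notin B'\}\in K$ and $(e_i)\in Z_K(E,E')$. Thus $Z_K(E,E')=(p^{\times n})^{-1}\bigl(Z_K(B,B')\bigr)$ as subspaces of $E^n$. Since $p^{\times n}$ is a finite product of the fibrations $p$, it is a fibration, and the restriction of a Hurewicz fibration over an arbitrary subspace of the base is again a Hurewicz fibration; the fibre is $Z_K(F,F')$ by the first paragraph. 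The relative-bundle claim is the identical argument with ``Hurewicz fibration'' replaced by ``locally trivial bundle'', the structure group of the restriction being $G^n$.

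Case $B=B'$. Now $Z_K(B,B')=B^n$. Write $Z_K(E,E')=\bigcup_{\sigma\in K}D_E(\sigma)$. For each $\sigma$ the restriction $p^{\times n}\colon D_E(\sigma)\to B^n$ is a finite product of the fibrations $p$ and $p'$, hence a fibration with fibre $D_F(\sigma)\subseteq Z_K(F,F')$, and for $\tau\subseteq\sigma$ the inclusion $D_E(\tau)\hookrightarrow D_E(\sigma)$ is the product of the closed cofibration $E'\hookrightarrow E$ (a $CW$-pair) with identity maps, hence a closed fibrewise cofibration over $B^n$. Gluing these pieces over the poset $K$ then shows $Z_K(p)\colon Z_K(E,E')\to B^n$ is a fibration with fibre $\colim_\sigma D_F(\sigma)=Z_K(F,F')$. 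For the bundle version it is cleaner to argue by local triviality: over a trivialising open $U\subseteq B$ for the relative bundle $(F,F')\to(E,E')\to(B,B)$ one has $(p^{-1}(U),p'^{-1}(U))\cong U\times(F,F')$, so the part of $Z_K(E,E')$ lying over $U^n$ is $U^n\times Z_K(F,F')$, with transition functions valued in $G^n$; hence $Z_K(p)$ is a bundle with fibre $Z_K(F,F')$ and structure group $G^n$.

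The step I expect to be the main obstacle is this gluing in the $B=B'$ fibration case: passing from the fact that each $p^{\times n}\colon D_E(\sigma)\to B^n$ is a fibration to the conclusion that their union is one. I would handle it either by invoking a standard ``fibrations glue along closed fibrewise cofibrations over a fixed base'' lemma — the cofibration hypotheses come for free from the $CW$-pair assumption — or, for a self-contained route, by constructing an explicit lifting function: lift each coordinate path with the lifting function of $p$ where the starting coordinate lies outside $E'$ and with that of $p'$ where it lies in $E'$, and use the neighbourhood-deformation-retract structure of $(E,E')$ to interpolate near $E'$, so that the coordinatewise lifts patch continuously in the parameter and the lifted point stays in $Z_K(E,E')$. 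The remaining ingredients — functoriality of $Z_K$, the pointset description, and the whole $F=F'$ case — are routine.
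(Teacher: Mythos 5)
The paper does not prove this lemma --- it is quoted verbatim from Denham--Suciu \cite{denham} and used as a black box --- so there is no in-paper argument to compare against. Your proof is a correct reconstruction along the same lines as the original: the pointset description $Z_K(X,A)=\{(x_i):\{i:x_i\notin A\}\in K\}$, the observation that when $F=F'$ (equivalently $p^{-1}(B')=E'$) the space $Z_K(E,E')$ is exactly $(p^{\times n})^{-1}(Z_K(B,B'))$ so the fibration property is inherited by restriction over a subspace of the base, and when $B=B'$ the gluing of the product fibrations $D_E(\sigma)\to B^n$ over the poset $K$. The one step you rightly flag as nontrivial --- that the colimit of these fibrations over a fixed base is again a fibration --- does go through as you suggest: Str{\o}m's theorem shows that a closed cofibration $E'\hookrightarrow E$ between fibrations over $B$ is automatically a fibrewise cofibration, and the standard union theorem for fibrations then applies; the bundle statement follows from your local-triviality argument. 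No gaps.
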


Consider the relative map $p: (Y,G) \to (Y/G,\ast)$ with fibres $F=F'=G$. Thus, there is a fibration 
\[G^n \longrightarrow Z_K(Y,G) \longrightarrow Z_K(Y/G,\ast).  \]
Pushing the fibre to the right by taking its classifying space, one gets a fibration
\[Z_K(Y,G) \longrightarrow Z_K(Y/G,\ast) \longrightarrow BG^m.  \]
In this case it is not true in general that $Z_K(Y,G)$ is 1-connected, as shown in the next example.

\begin{ex}
Let $G=\Z/2\Z$ act on the 2-sphere $S^2$ by the antipodal map. Then the orbit space is $S^2/(\Z/2\Z)=\R P^2$. Hence there is a fibration
\[Z_K(S^2,\Z/2\Z) \longrightarrow Z_K(\R P^2,\ast) \longrightarrow B(\Z/2\Z)^m. \]
If $K$ is the 0-skeleton of the 1-simplex, then 
$$Z_K(S^2,\Z/2\Z)=(S^2 \times \Z/2\Z) \cup (\Z/2\Z \times S^2) $$ 
is equivalent to the wedge sum $(\bigvee_{4}S^2)\vee S^1$. Hence, $Z_K(S^2,\Z/2\Z)$ is not simply connected.
\end{ex}

Nevertheless, the following theorem shows that the fundamental group of these polyhedral products can be computed with the information that was obtained from Section \ref{sec: htpy groups Z_K for BG}.

\begin{lemma}\label{lemma: iso fundamental groups of pairs}
If $A \subset X \subset Y $ with $A$ finite discrete, $X$ and $Y$ path-connected, such that the induced map $\pi_i(X) \xrightarrow{i_{\#}} \pi_i(Y)$ is an isomorphism for $i=0,1$, then 
\[\pi_1(Z_K(X,A)) \cong \pi_1(Z_K(Y,A)). \]
\end{lemma}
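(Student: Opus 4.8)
The plan is to use the colimit definition of the polyhedral product together with the fact that a polyhedral product is a homotopy functor in the pair, but — since we only have information about $\pi_0$ and $\pi_1$ rather than a full homotopy equivalence — to carry this out by hand via the Seifert--van Kampen theorem, exactly as in the proof of Theorem \ref{thm: 1-connected polyhedral prod}. First I would record that the inclusions $A \hookrightarrow X \hookrightarrow Y$ induce inclusions $D_X(\sigma) \hookrightarrow D_Y(\sigma)$ for every $\sigma \in K$, compatible with the structure maps of the colimit diagram, hence an inclusion $Z_K(X,A) \hookrightarrow Z_K(Y,A)$. The claim is that this map induces an isomorphism on $\pi_1$.

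The key reduction is the following: for a single simplex $\sigma$, the space $D_X(\sigma) = Y_1 \times \cdots \times Y_n$ (with $Y_i = X$ if $i \in \sigma$ and $Y_i = A$ otherwise) maps to $D_Y(\sigma)$, and since $A$ is discrete the only factors contributing to $\pi_1$ are the $X$- (resp. $Y$-) factors indexed by $\sigma$; by hypothesis $\pi_1(X) \xrightarrow{\cong} \pi_1(Y)$ and $\pi_0(X) \xrightarrow{\cong} \pi_0(Y)$, so $\pi_1(D_X(\sigma)) \xrightarrow{\cong} \pi_1(D_Y(\sigma))$ and likewise the path components correspond. The same holds for any intersection $D_X(\sigma) \cap D_X(\tau) = D_X(\sigma \cap \tau)$. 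Then, enumerating the simplices $\tau_1, \dots, \tau_k$ of $K$ and building $Z_K$ as an iterated pushout $\big(\operatorname{colim}_{i \le j} D(\tau_i)\big) \cup_{D(\tau_{j+1}) \cap (\cdots)} D(\tau_{j+1})$, I would run Seifert--van Kampen at each stage in parallel for $X$ and for $Y$. At each stage the amalgamated free product for $X$ maps to the one for $Y$, and by induction the map on each vertex group and on the amalgamating subgroup (which, as in the proof of Theorem \ref{thm: 1-connected polyhedral prod}, is a product of copies of $\pi_1(X)$, resp. $\pi_1(Y)$, indexed by the relevant vertex set) is an isomorphism; hence by the five lemma / functoriality of amalgamated free products the map on the full $\pi_1$ is an isomorphism. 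One must check that path-connectedness is preserved throughout so that van Kampen applies with a single basepoint — this uses the $\pi_0$-isomorphism hypothesis and the fact that $D(\varnothing) = A^n$ sits inside every $D(\sigma)$, providing a common basepoint in every piece.

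The main obstacle I expect is bookkeeping the van Kampen amalgamation compatibly across the $X$ and $Y$ computations: one needs that the images of $\pi_1$ of the intersections under the two structure maps generate ``the same'' normal subgroup on both sides, i.e. that the identification of amalgamating subgroups is natural in the pair. This is handled by observing — as in Theorem \ref{thm: 1-connected polyhedral prod} — that these images are exactly the coordinate subgroups indexed by vertices lying in the appropriate simplices, and the inclusion $Z_K(X,A) \hookrightarrow Z_K(Y,A)$ respects the coordinate decomposition; so the diagram of amalgamated products commutes and each arrow is an isomorphism by the inductive hypothesis. A cleaner alternative, if one prefers to avoid the explicit van Kampen induction, is to invoke the homotopy-functoriality of $Z_K(-,-)$ after replacing $(X,A) \subset (Y,A)$ by a CW model in which $X \hookrightarrow Y$ is a relative CW-inclusion adding only cells of dimension $\ge 3$ (possible since the inclusion is $1$-connected, i.e. $2$-connected on the level needed after killing $\pi_2$-discrepancies is \emph{not} assumed — so this route genuinely requires the van Kampen argument and I would present that one).
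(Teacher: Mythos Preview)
Your plan is sound and would work, but it takes a different route from the paper.  The paper does not run an iterated van Kampen over all simplices of $K$; instead it makes a single reduction to the $1$-skeleton of $K$ and the $2$-skeleton of the spaces: since $A$ is $0$-dimensional, the $2$-skeleton of $Z_K(X,A)$ already sits (up to homotopy) inside $Z_{SK_1}(X_2,A)$, so $\pi_1(Z_K(X,A))\cong\pi_1(Z_{SK_1}(X_2,A))$, and likewise for $Y$; one is then comparing two polyhedral products over a graph, where the comparison is immediate.  Right after, the paper notes the cleaner alternative you almost arrive at: use $Z_K\simeq\hocolim_\sigma D(\sigma)$ together with Farjoun's identity $\pi_1(\hocolim_\sigma D(\sigma))\cong\colim_\sigma\pi_1(D(\sigma))$, so the isomorphism $\pi_1(D_X(\sigma))\cong\pi_1(D_Y(\sigma))$ for each $\sigma$ suffices.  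Your approach has the advantage of being entirely self-contained (no appeal to \cite{farjoun} or to skeleta bookkeeping); the paper's approaches avoid the inductive pushout accounting.

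One point in your outline deserves more care than you give it.  In Theorem \ref{thm: 1-connected polyhedral prod} the group $G$ is path-connected, so every $D(\sigma)$ and every intersection $D(\sigma\cap\tau)$ is path-connected and the ordinary Seifert--van Kampen theorem applies.  Here $A$ is finite discrete, so $D(\sigma)$ and $D(\sigma)\cap D(\tau)=D(\sigma\cap\tau)$ have $|A|^{n-|\sigma\cap\tau|}$ components; merely having ``a common basepoint in $D(\varnothing)=A^n$'' does not make the usual statement of van Kampen applicable.  You will need either the groupoid form of van Kampen, or to reorder the induction so that you first form the union over all vertices (which is already path-connected, as in Proposition \ref{prop: poly prod for 0-skeleton}) and then adjoin the edge-pieces $D(\{i,j\})$ one at a time, at which stage all intersections are connected.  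This is a fixable technicality, not a flaw in the strategy, but it is exactly the place where your argument diverges from the model in Theorem \ref{thm: 1-connected polyhedral prod} and it should be made explicit.
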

\begin{proof}
Let $D_X(\sigma)$ denote the functor $D$ evaluated on the pair $(X,A)$ and $D_Y(\sigma)$ denote the functor $D$ evaluated on the pair $(Y,A)$. If $\sigma \in K$ is a simplex and if $\pi_i(X) \to \pi_i(Y)$ is an isomorphism for $i=0,1$, then the map
$D_X(\sigma) \hookrightarrow D_Y(\sigma)$
induces an isomorphism on the fundamental group. By definition,
\[\pi_1(Z_K(X,A))=\pi_1(\underset{{\sigma \in K}}{\colim} D_X(\sigma))\] 
and
\[\pi_1(Z_K(Y,A))=\pi_1(\underset{{\sigma \in K}}{\colim} D_Y(\sigma)). \] 
Recall that if $X_2$ is the 2-skeleton of $X$, then $\pi_1(X) \cong \pi_1(X_2)$. Similarly, if $Y_2$ is the 2-skeleton of $Y$, then $\pi_1(Y) \cong \pi_1(Y_2)$. So this shows that $\pi_1(X_2) \cong \pi_1(Y_2)$. Similarly, it suffices to work with the 2-skeleton of the spaces $Z_K(X,A)$ and $ Z_K(Y,A)$. Denote these two spaces by $Z_K(X,A)_2$ and $ Z_K(Y,A)_2$, respectively. It is clear that there are inclusions up to homotopy $Z_K(X,A)_2 \hookrightarrow Z_{SK_1}(X_2,A)$ and $Z_K(Y,A)_2 \hookrightarrow Z_{SK_1}(Y_2,A)$ since adding a simplex $\sigma$ on $k\geq 3$ vertices, does not change the 2-skeleton of the polyhedral product. Both inclusions induce isomorphisms in $\pi_1$. This suffices.
\end{proof}

Note that Lemma \ref{lemma: iso fundamental groups of pairs} can be proved using the fact that there is a homotopy equivalence
\[Z_K(X,A))=\underset{{\sigma \in K}}{\colim}D_X(\sigma) \simeq \underset{{\sigma \in K}}{\hocolim}\hspace{.15cm} D_X(\sigma),\]
see \cite{cohen.macs}, and the fact that 
\[\pi_1 ( \underset{{\sigma \in K}}{\hocolim} D_X(\sigma)) \cong \underset{{\sigma \in K}}{\colim} \pi_1( D_X(\sigma)),\]
see \cite{farjoun} for details.

\

Next consider the pair $(BG,BH)$, where $H$ is a closed subgroup of $G$. As mentioned in the introduction, there is an inclusion $G/H \hookrightarrow C(G/H)$ which gives an inclusion $EG\times G/H \hookrightarrow EG \times C (G/H)$. Hence, $G/H$ can be regarded as a $G$-equivariant subspace of $EG$. There is a fibration
\[ Z_K(EG,G/H) \longrightarrow (EG)^n \times_{G^n}Z_K(EG,G/H) \longrightarrow (BG)^n .\]
Since  $EG\times G/H \simeq G/H$ and $EG\times_G G/H \cong EG/H \cong BH$, then the space $(EG)^n \times_{G^n}Z_K(EG,G/H)$ is homotopy equivalent to 
$$Z_K(EG/G,EG\times_G G/H) \simeq Z_K(BG,BH) .$$ 
This proves the following proposition 
\begin{prop}
Let $H$ be a closed subgroup of the Lie group $G$. There is a fibration given by
\[ Z_K(EG,G/H) \longrightarrow Z_K(BG,BH)  \longrightarrow (BG)^n .\]
\end{prop}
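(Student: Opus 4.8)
The plan is to deduce this proposition from the Denham--Suciu relative bundle lemma quoted above, combined with the already-recorded fact that for fixed $K$ the polyhedral product $Z_K(-,-)$ depends only on the relative homotopy type of the pair. The one genuinely new ingredient is choosing a model in which $G/H$ literally sits inside $EG$ as a $G$-subspace.

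First I would fix such a model. Set $E := EG \times C(G/H)$ and $A := EG \times (G/H) \subseteq E$, with $G$ acting diagonally. Since $C(G/H)$ is contractible, $E$ is contractible, and since $G$ acts freely on $EG$ the diagonal action on $E$ is free, so $E$ is a model for $EG$; the action restricts to a free action on $A$, and the projection $E \to EG$ restricts to a $G$-equivariant homotopy equivalence $A = EG \times (G/H) \to G/H$. Passing to orbit spaces, $E/G \cong BG$ and $A/G = EG \times_G (G/H) \cong EG/H \cong BH$, and the inclusion $A \hookrightarrow E$ descends to $BH \hookrightarrow BG$. In particular $(E,A) \simeq (EG, G/H)$ as pairs, so $Z_K(E,A) \simeq Z_K(EG, G/H)$ by homotopy invariance of $Z_K$.

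Next I would verify that $p \colon (E,A) \to (BG,BH)$ is a relative bundle of the kind the Denham--Suciu lemma requires. Both $p \colon E \to BG$ and $p|_A \colon A \to BH$ are quotient maps of free, proper actions of the Lie group $G$, hence principal $G$-bundles; in particular both have fibre $G$, so the lemma applies with $F = F' = G$. Because $A$ is $G$-invariant, the local trivializations of $E \to BG$ --- which exist since $H$ is closed in the Lie group $G$, so $G \to G/H$ admits local sections --- restrict to local trivializations of the pair with transition functions in $G$. The lemma then produces a bundle with structure group $G^n$,
\[ Z_K(G,G) \longrightarrow Z_K(E,A) \xrightarrow{\ Z_K(p)\ } Z_K(BG,BH), \]
and since every pair occurring here is $(G,G)$ we have $Z_K(G,G) = G^n$; moreover this bundle is principal, with $G^n$ acting coordinatewise by translation on the fibre $G^n$. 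Using $Z_K(E,A) \simeq Z_K(EG,G/H)$ this reads $G^n \to Z_K(EG,G/H) \to Z_K(BG,BH)$, a principal $G^n$-bundle.

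Finally I would push the fibre to the right: for a principal $G^n$-bundle $P \to B$ one has $P \times_{G^n} EG^n \simeq B$, giving a fibration $P \to B \to B(G^n) = (BG)^n$ with fibre $P$. Applied to our bundle this yields exactly
\[ Z_K(EG,G/H) \longrightarrow Z_K(BG,BH) \longrightarrow (BG)^n, \]
as claimed. Equivalently, one can invoke the pairwise analogue of Theorem~\ref{fibration.theorem}(1), namely $E^n \times_{G^n} Z_K(E,A) \simeq Z_K(E/G,A/G) = Z_K(BG,BH)$, applied to the principal $G^n$-bundle $E^n \to (BG)^n$ with fibre the $G^n$-space $Z_K(E,A)$. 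The step I expect to be the main obstacle is the middle one: checking that $(E,A) \to (BG,BH)$ is a \emph{relative} bundle with structure group $G$, and not merely a map of two fibrations --- this is precisely where the hypotheses that $G$ is a Lie group and $H$ a closed subgroup enter, via local sections of $G \to G/H$. Everything else is the quoted lemma or the already-established homotopy invariance of $Z_K$.
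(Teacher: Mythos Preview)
Your proof is correct and follows essentially the same approach as the paper: both replace $EG$ by the model $E = EG \times C(G/H)$ so that $G/H$ sits inside as a $G$-subspace, and both then identify $(EG)^n \times_{G^n} Z_K(EG,G/H)$ with $Z_K(BG,BH)$ to obtain the Borel-type fibration. The paper writes down the Borel fibration first and then identifies the total space, whereas your primary route goes through the Denham--Suciu relative bundle lemma to get the principal $G^n$-bundle $G^n \to Z_K(E,A) \to Z_K(BG,BH)$ and then classifies it; your closing ``equivalently'' remark is exactly the paper's argument, and you supply more detail (the relative bundle check, the role of local sections for $G \to G/H$) than the paper does.
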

\begin{prop}
Let $H$ be a closed subgroup of $G$. Then there is a homotopy equivalence
\[\Omega ( Z_K(BG,BH) ) \simeq G^n \times \Omega (Z_K(EG,G/H))\] 
and a short exact sequence of groups
\[1 \to\pi_1(Z_K(EG,G/H)) \to  \pi_1( Z_K(BG,BH) ) \to G^n \to 1.\]
This decomposition may not preserve the group structure.
\end{prop}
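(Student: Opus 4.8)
\emph{Proof proposal.} The plan is to loop the fibration of the preceding proposition and check that the looped projection admits a homotopy section, which forces the resulting fibration of loop spaces to be trivial. Applying the based loop functor to $Z_K(EG,G/H)\to Z_K(BG,BH)\xrightarrow{p}(BG)^n$ yields a fibration $\Omega Z_K(EG,G/H)\to\Omega Z_K(BG,BH)\xrightarrow{\Omega p}\Omega((BG)^n)$, and since $G$ is a Lie group we identify $\Omega((BG)^n)\simeq(\Omega BG)^n\simeq G^n$. As with any fibration obtained by looping, $\Omega Z_K(EG,G/H)\to\Omega Z_K(BG,BH)\to G^n$ is, up to homotopy, a principal $\Omega Z_K(EG,G/H)$-fibration, hence it is fibre-homotopy trivial precisely when $\Omega p$ has a homotopy section; in that case the map $G^n\times\Omega Z_K(EG,G/H)\to\Omega Z_K(BG,BH)$, $(\gamma,\ell)\mapsto s(\gamma)\cdot\ell$ (using the loop multiplication), is a homotopy equivalence, which is the asserted decomposition.

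To build the section I would exploit the vertices of $K$. For each vertex $i\in[n]$ the slice $D(\{i\})=BH\times\cdots\times BG\times\cdots\times BH$, with $BG$ in coordinate $i$ and $BH$ (hence the basepoint) in every other coordinate, is a subspace of $Z_K(BG,BH)$. Writing $\iota_i(\gamma)$ for the loop in $D(\{i\})\subseteq Z_K(BG,BH)$ equal to $\gamma$ in coordinate $i$ and constant at the basepoint in all other coordinates, set $s(\gamma_1,\dots,\gamma_n)=\iota_1(\gamma_1)\ast\cdots\ast\iota_n(\gamma_n)$. This is continuous in $(\gamma_1,\dots,\gamma_n)$, and $\Omega p\circ s$ sends $(\gamma_1,\dots,\gamma_n)$ to the $n$-fold concatenation of the loops ``$\gamma_i$ in coordinate $i$'' inside $(BG)^n$ (the off-slot factors contribute only the basepoint, since $BH\to BG$ is pointed), which is homotopic rel endpoints to $(\gamma_1,\dots,\gamma_n)$ by a single reparametrization of $[0,1]$ applied simultaneously in all coordinates; hence $\Omega p\circ s\simeq\mathrm{id}$. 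Equivalently, this exhibits the connecting map $G^n=\Omega((BG)^n)\to Z_K(EG,G/H)$ as null-homotopic, the nullhomotopy being obtained by collapsing the contractible factor $EG$ of $D(\{i\})$ one coordinate at a time.

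With the decomposition in hand, the short exact sequence follows from the long exact homotopy sequence of the original fibration: nullity of the connecting map kills the boundary homomorphisms $\pi_2((BG)^n)\to\pi_1(Z_K(EG,G/H))$ and $\pi_1((BG)^n)\to\pi_0(Z_K(EG,G/H))$, so the long exact sequence collapses to $1\to\pi_1(Z_K(EG,G/H))\to\pi_1(Z_K(BG,BH))\to G^n\to1$. The final clause is a remark, not a computation: the section $s$ constructed above is not a map of $H$-spaces, since concatenating loops drawn from distinct slices $D(\{i\})$ bears no relation to the coordinatewise loop multiplication on $(BG)^n$, so the product decomposition is only a homotopy equivalence of spaces and need not intertwine the $H$-space (hence group) structures; in particular the short exact sequence need not split as groups. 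The one step that genuinely needs care is the verification that $s$ is a homotopy section, equivalently that the coordinate-by-coordinate collapse of the $EG$-factors is consistent across all $n$ stages; this is exactly where one uses that $K$ contains all $n$ vertices and that the basepoint of $EG$ may be taken inside $G/H$.
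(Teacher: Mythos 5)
Your proposal is correct and follows essentially the same route as the paper: loop the fibration $Z_K(EG,G/H)\to Z_K(BG,BH)\to (BG)^n$, produce a homotopy section of the looped projection, and conclude the product decomposition and the short exact sequence from the resulting splitting. The only difference is that the paper simply asserts the existence of the section, whereas you construct it explicitly from the vertex slices $D(\{i\})$ (correctly noting that this uses all singletons $\{i\}\in K$); this is a welcome addition, not a deviation.
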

\begin{proof}
There is a fibration as follows
\[ Z_K(EG,G/H) \longrightarrow Z_K(BG,BH) \longrightarrow (BG)^n .\]
Taking the loops of this fibration gives a fibration
\[\Omega (Z_K(EG,G/H)) \longrightarrow \Omega (Z_K(BG,BH)) \longrightarrow \Omega((BG)^n)=G^n .\]
There is a section $G^n \xrightarrow{s} \Omega ( Z_K(BG,BH) )$ which implies that there is a homotopy equivalence of topological spaces 
\[\Omega ( Z_K(BG,BH) ) \simeq G^n \times \Omega (Z_K(EG,G/H)).\]
The short exact sequence follows from this equivalence.
\end{proof}

\section{An extension problem}\label{section.extension}

In this section we are interested in investigating a certain extension problem that arises from polyhedral products. 

A. Adem, F. Cohen and E. Torres Giese \cite{fredb2g} introduced the spaces $B(q,G)$ for integers $q \geq 2$, which form a sequence of spaces and a filtration of $BG$
\[
 B(2,G) \subset B(3,G) \subset \cdots \subset B(\infty,G)=BG,
\]
In particular, $B(2,G)$ is defined to be the geometric realization
\[B(2,G)=\big(\bigsqcup_{k\geq 1}\Ho(\Z^k,G)\times \Delta[k]\big)/\sim ,\]
where $\sim$ is generated by the standard face and degeneracy relations, see \cite{milnor57}.

\

A proof of the following lemma can be found in \cite{fredb2g}. 

\begin{lemma}
Let $G$ be a nonabelian group. The following are equivalent
\begin{itemize}\label{lemma:TC.equivalent.definitions}
\item[a.] If $g \notin Z(G)$, then $C(g)$ is abelian.
\item[b.] If $[g,h]=1$, then $C(g)=C(h)$ whenever $g,h \notin Z(G)$.
\item[c.] If $[g,h]=1=[h,k]$, then $[g,k]=1$ whenever $h \notin Z(G)$.
\item[d.] If $A,B \leq G$ and $Z(G) <C_G(A)\leq C_G(B) < G$, then $C_G(A) = C_G(B)$.
\end{itemize}
\end{lemma}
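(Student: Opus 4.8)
The plan is to prove the four conditions equivalent by running the cyclic chain $(a)\Rightarrow(b)\Rightarrow(c)\Rightarrow(d)\Rightarrow(a)$. Throughout I would write $C_G(x)$ for the centralizer of an element or a subset and use freely the elementary identities $C_G(S)=\bigcap_{s\in S}C_G(s)$, the order-reversal $S\subseteq T \Rightarrow C_G(T)\subseteq C_G(S)$, the fact that $C_G(A)=G$ precisely when $A\subseteq Z(G)$, and that an abelian subgroup $M$ lies inside $C_G(M)$. These are the only tools needed; the whole argument is a sequence of centralizer manipulations.

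For $(a)\Rightarrow(b)$ I would argue: if $[g,h]=1$ with $g,h\notin Z(G)$, then $h\in C_G(g)$, and since $C_G(g)$ is abelian by $(a)$, every element of $C_G(g)$ commutes with $h$, so $C_G(g)\subseteq C_G(h)$; the reverse inclusion is symmetric. For $(b)\Rightarrow(c)$: given $[g,h]=1=[h,k]$ with $h\notin Z(G)$, I may assume $g,k\notin Z(G)$ (otherwise $[g,k]=1$ is immediate), and then $(b)$ gives $C_G(g)=C_G(h)=C_G(k)$, whence $k\in C_G(g)$.

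The step I expect to be the real obstacle is $(c)\Rightarrow(d)$, since it is where one must juggle centralizers of subgroups together with the strict inequalities $Z(G)<C_G(A)\leq C_G(B)<G$. The idea: from $C_G(A)<G$ and $C_G(B)<G$ one gets $A\not\subseteq Z(G)$ and $B\not\subseteq Z(G)$, so fix non-central elements $g\in C_G(A)$ and $b\in B$. For any $x\in C_G(B)$ one has $[x,b]=1$ and $[b,g]=1$ (the latter because $g\in C_G(A)\subseteq C_G(B)$), so $(c)$ with middle term $b$ yields $[x,g]=1$; then for each $a\in A$, either $a\in Z(G)$ or $(c)$ with middle term $g$ applied to $[a,g]=1$ and $[g,x]=1$ yields $[a,x]=1$. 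Hence $x\in C_G(A)$, so $C_G(B)\subseteq C_G(A)$ and equality follows.

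Finally, for $(d)\Rightarrow(a)$ I would take $g\notin Z(G)$ and arbitrary $x,y\in C_G(g)$, and apply $(d)$ to $A=\langle g,x\rangle$ (abelian, since $x$ commutes with $g$) and $B=\langle g\rangle$: the hypotheses of $(d)$ hold because $g\in C_G(A)\setminus Z(G)$ and $C_G(B)=C_G(g)<G$, and its conclusion reads $C_G(g)\cap C_G(x)=C_G(g)$, that is, $C_G(g)\subseteq C_G(x)$; likewise $C_G(g)\subseteq C_G(y)$. Then $y\in C_G(g)\subseteq C_G(x)$, so $[x,y]=1$ and $C_G(g)$ is abelian. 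The only points demanding care are verifying, at each application of $(d)$ or $(c)$, that the relevant centralizer is genuinely strictly between $Z(G)$ and $G$ and that the element playing the ``middle'' role in $(c)$ is the non-central one --- which is exactly why the witnesses $a$, $b$, $g$ are introduced.
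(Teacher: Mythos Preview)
The paper does not actually supply a proof of this lemma; immediately before the statement it says ``A proof of the following lemma can be found in \cite{fredb2g}'' and moves on. So there is no in-paper argument to compare against.

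Your cyclic proof $(a)\Rightarrow(b)\Rightarrow(c)\Rightarrow(d)\Rightarrow(a)$ is correct and self-contained. The only cosmetic point: in $(c)\Rightarrow(d)$ you announce that $A\not\subseteq Z(G)$ but never use it (the non-central witness you need on the $A$-side is $g\in C_G(A)\setminus Z(G)$, whose existence follows from $Z(G)<C_G(A)$, not from anything about $A$ itself); and in $(d)\Rightarrow(a)$ the ``likewise $C_G(g)\subseteq C_G(y)$'' is superfluous, since $C_G(g)\subseteq C_G(x)$ for arbitrary $x\in C_G(g)$ already gives $[x,y]=1$ by taking $y\in C_G(g)\subseteq C_G(x)$. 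Neither of these affects correctness.
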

\begin{defn}\label{defn: TC group}
Let $G$ be a nonabelian group. If $G$ satisfies any of the equivalent statements in Lemma \ref{lemma:TC.equivalent.definitions}, then $G$ is called a \textit{transitively commutative} group, or simply a \textit{TC group}. 
\end{defn}
Some examples of TC groups include abelian groups and dihedral groups.
Let $a_1,\dots,a_k \in G - Z(G) $ be representatives of distinct centralizers in $G$ such that the set $\{C_G(a_1),\dots,C_G(a_k)\}$ is the smallest collection of centralizers covering $G$. In \cite{fredb2g}  A. Adem, F. Cohen and E. Torres Giese prove the following theorem.
\begin{thm}[Adem, Cohen \& Torres Giese] If $G$ is a finite TC group with trivial center, then there is a homotopy equivalence
\[B(2,G)\simeq \bigvee_{1 \leq i \leq k}\Big( \prod_{p||C_G{a_i}|} BP\Big), \] 
where $P\in {\rm Syl}_P(G)$.
\end{thm}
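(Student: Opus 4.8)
The final statement is the theorem of Adem, Cohen and Torres Giese describing $B(2,G)$ for a finite TC group $G$ with trivial center. Since the paper attributes this to \cite{fredb2g}, my plan is to reconstruct its proof using the structural properties of TC groups together with the homotopy-colimit description of $B(2,G)$. First I would recall that $B(2,G)$ is, up to homotopy, the homotopy colimit (over the poset of finitely generated abelian subgroups, or equivalently over the commuting complex) of the classifying spaces $BA$ as $A$ ranges over abelian subgroups of $G$; equivalently $B(2,G)=|N_\bullet|$ where the simplices in degree $k$ are the $k$-tuples of pairwise-commuting elements. The TC hypothesis is exactly what rigidifies this: by Lemma \ref{lemma:TC.equivalent.definitions}, commuting is an equivalence relation on $G-Z(G)$, so the maximal abelian subgroups containing non-central elements are precisely the centralizers $C_G(a_1),\dots,C_G(a_k)$, and when $Z(G)=1$ these centralizers are abelian and pairwise intersect trivially.

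The key steps, in order, would be: (1) Identify the maximal abelian subgroups of $G$. Using TC-ness and $Z(G)=1$, show the set $\{C_G(a_1),\dots,C_G(a_k)\}$ partitions $G-\{1\}$ into disjoint punctured abelian subgroups, i.e. $G = \bigcup_i C_G(a_i)$ with $C_G(a_i)\cap C_G(a_j)=\{1\}$ for $i\neq j$. (2) Deduce that the commuting complex / poset of nontrivial abelian subgroups is the disjoint union of the posets of subgroups of the individual $C_G(a_i)$, glued only at the trivial subgroup; hence the homotopy colimit computing $B(2,G)$ splits as a wedge $\bigvee_i B C_G(a_i)$ — each $C_G(a_i)$ being abelian, this is already a product of cyclic (hence $K(\pi,1)$) pieces. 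Here one must be careful about basepoints and about the contractible ``trivial subgroup'' stratum; a Mayer–Vietoris / nerve-of-a-cover argument, or directly van Kampen plus a homology computation, gives the wedge. (3) For each abelian $C_G(a_i)$, decompose $BC_G(a_i)$ using the primary decomposition $C_G(a_i)\cong \prod_{p\mid |C_G(a_i)|}(\text{Sylow }p)$, giving $BC_G(a_i)\simeq \prod_{p\mid |C_G(a_i)|}BP$ with $P\in\mathrm{Syl}_p(G)$ (note each Sylow $p$-subgroup of the abelian group $C_G(a_i)$ is a Sylow $p$-subgroup of $G$ when $C_G(a_i)$ contains a full Sylow, which TC-ness arranges). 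Assembling (2) and (3) yields the claimed homotopy equivalence.

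The main obstacle will be step (2): making the ``splitting of the homotopy colimit into a wedge'' rigorous. The naive statement — that a homotopy colimit over a poset which is a wedge of subposets at a common minimum is the wedge of the pieces — is false in general (the minimum contributes a cone, not a point), so I would instead work with the simplicial model $B(2,G)=|\mathrm{Hom}(\mathbb Z^\bullet,G)|$ directly: the nondegenerate simplices are supported on tuples lying in a single $C_G(a_i)$ (since pairwise-commuting non-central elements all lie in one centralizer by TC-ness), so the simplicial set is the wedge, over $i$, of the nerves $B(2,C_G(a_i))=BC_G(a_i)$, identified along the single vertex. This combinatorial observation sidesteps the homotopy-colimit subtlety. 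A secondary technical point is handling central involutions or central elements that survive into several centralizers — but the hypothesis $Z(G)=1$ removes exactly this, which is why it is assumed. Once the wedge decomposition is in place, step (3) is the standard Künneth/primary-decomposition argument and is routine.
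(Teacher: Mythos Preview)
The paper does not prove this theorem; it is quoted from \cite{fredb2g} (Adem--Cohen--Torres Giese) and used as input for the subsequent discussion. There is therefore no proof in the present paper to compare your proposal against.

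That said, your reconstruction is sound and is essentially the argument given in \cite{fredb2g}. The decisive observation is the one you isolate in step~(2): working directly with the simplicial model, a nondegenerate $k$-simplex of $B(2,G)$ is a $k$-tuple of pairwise-commuting nonidentity elements, and the TC hypothesis with $Z(G)=1$ forces any such tuple to lie in a single $C_G(a_i)$; since $C_G(a_i)\cap C_G(a_j)=\{1\}$ for $i\neq j$, the sub-simplicial-sets $BC_G(a_i)\subset B(2,G)$ meet only in the basepoint, giving the wedge decomposition on the nose (not merely up to homotopy). Your caution about the homotopy-colimit formulation is well placed, and bypassing it via the bar model is exactly the right move.

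For step~(3), your parenthetical ``which TC-ness arranges'' deserves one more sentence: any Sylow $p$-subgroup $P$ of $G$ has nontrivial center, so choosing $1\neq z\in Z(P)$ gives $P\subseteq C_G(z)=C_G(a_i)$ for some $i$; conversely, if $p\mid |C_G(a_i)|$ and $Q$ is its $p$-Sylow, any Sylow $P\supseteq Q$ of $G$ lands in some $C_G(a_j)$, and $Q\neq 1$ forces $j=i$, whence $P=Q$. This is the only point where the statement's ``$P\in\mathrm{Syl}_p(G)$'' (rather than $\mathrm{Syl}_p(C_G(a_i))$) needs justification, and you should make it explicit.
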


The theorem states that if $G$ is a TC group with trivial center, then 
\[ B(2,G) \simeq BG_1 \vee \cdots \vee BG_k, \]
where $G_i=\prod_{p||C_G{a_i}|} BP$, and this gives the polyhedral product $Z_{K_0}(\underline{BG})$. Hence, there is a map
\[Z_{K_0}(\underline{BG}) \longrightarrow BG. \]
On the level of fundamental groups, this gives the homomorphism
\[
G_1 \ast \dots \ast G_k \overset{\varphi}{\longrightarrow} G.
\]

In general, assume $\{G_1,\dots,G_n\}$ is a family of subgroups of a finite discrete group $G$. Then there is a natural map 
\[
G_1 \ast \dots \ast G_n \overset{\varphi}{\longrightarrow} G.
\]
It is natural to ask the following: for what abstract simplicial complexes $K$ on $[n]$ does the map
$
BG_1 \vee \dots \vee BG_n \xrightarrow{B\varphi} BG
$
extend to $Z_K(\underline{BG})$? That is equivalent to asking for those $K$ such that the following diagram commutes
\begin{center}
\begin{tikzcd}
BG_1 \vee \dots \vee BG_n \arrow{r}{B\varphi}\arrow[hookrightarrow]{d}{i} &BG \\
Z_K(\underline{BG}). \arrow{ur} &
\end{tikzcd}
\end{center}
The importance of this question is that, if the map in question extends, then we detect commuting elements in $G$. One could also pose the same question algebraically and seek the simplicial complexes $K$ for which the following diagram commutes
\begin{center}
\begin{tikzcd}
G_1 \ast \cdots \ast G_n \arrow{r}{\varphi} \arrow{d}{i_{\#}} &G \\
\prod_{SK_1}G_i .\arrow{ur}
\end{tikzcd}
\end{center}
The rest of this section is devoted to understanding this question.

\

The first example is that of a transitively commutative (TC) group $G$ with trivial center, where the family of subgroups $\{G_1,\dots,G_n\}$ is chosen to consist of the maximal abelian subgroups of $G$.
\begin{prop}
Let $G$ be a transitively commutative group and let the set $\{A_1,\dots,A_k\}$ be the distinct maximal abelian subgroups of $G$. Then the map 
$$B\varphi : BA_1 \vee \dots \vee BA_k \to BG$$ 
does not extend for any simplicial complex $K$ such that $K_0 \neq K$.
\end{prop}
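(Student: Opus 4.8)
The plan is to show that if $K$ is any simplicial complex on $[k]$ with at least one edge, then the dotted extension $Z_K(\underline{BA}) \to BG$ cannot exist, so the only possibility is $K = K_0$. On the level of fundamental groups, an extension would mean that $\varphi: A_1 \ast \cdots \ast A_k \to G$ factors through the quotient $\prod_{SK_1} A_i$, i.e. factors through the graph product determined by the $1$-skeleton of $K$. So it suffices to show that for any edge $\{i,j\}$ in $K$, the relation forcing $A_i$ and $A_j$ to commute elementwise is \emph{not} respected by $\varphi$.

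First I would reduce to the case of a single edge: if $K$ has an edge $\{i,j\}$, then $\prod_{SK_1} A_i$ surjects onto $\prod_{SK_1'} A_i$ where $SK_1'$ is the complete graph's worst case — more simply, the map $\varphi$ factors through $\prod_{SK_1} A_i$ only if for \emph{every} edge $\{i,j\}$ of $K$ the elements of $A_i$ commute with the elements of $A_j$ in $G$. Indeed, $\prod_{SK_1} A_i$ is the free product $A_1 \ast \cdots \ast A_k$ modulo exactly the relations $[a,b]=1$ for $a \in A_i$, $b \in A_j$, $\{i,j\}$ an edge. For $\varphi$ to descend to this quotient, each such relation must map to a true relation in $G$; that is, $[\varphi(a),\varphi(b)] = [a,b] = 1$ in $G$ (here $\varphi$ restricted to each $A_i$ is the inclusion). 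So the extension exists for $K$ if and only if $A_i$ and $A_j$ commute elementwise in $G$ for every edge $\{i,j\}$ of $SK_1$.

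The heart of the argument is then: two \emph{distinct} maximal abelian subgroups $A_i, A_j$ of a TC group $G$ cannot commute elementwise. Suppose they did. Pick $a \in A_i$, $b \in A_j$. Since $A_i$ is abelian and maximal, and $A_j$ is abelian and maximal, and $G$ is nonabelian with (by hypothesis) trivial center, choose $a \in A_i \setminus Z(G) = A_i \setminus \{1\}$ appropriately. If every element of $A_j$ commutes with every element of $A_i$, then in particular $A_j \subseteq C_G(a)$ for any $a \in A_i$. But for a TC group, if $a \notin Z(G)$ then $C_G(a)$ is abelian (Lemma~\ref{lemma:TC.equivalent.definitions}(a)); and $A_i$ is a maximal abelian subgroup containing $a$, so $A_i \subseteq C_G(a)$ is an abelian subgroup of the abelian group $C_G(a)$, forcing $A_i = C_G(a)$ by maximality. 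Likewise the abelian subgroup $\langle A_i \cup A_j\rangle \subseteq C_G(a)$ is abelian and contains $A_i$, so by maximality $A_j \subseteq A_i$, hence $A_j = A_i$ (both maximal), contradicting distinctness. Thus no edge can be present, i.e. $K = K_0$, and the topological statement follows by applying $\pi_1$ to the putative commuting triangle and Proposition~\ref{fun.gp.Zk}, which identifies $\pi_1(Z_K(\underline{BA}))$ with $\prod_{SK_1} A_i$.

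The main obstacle — really the only subtle point — is making sure the reduction to $\pi_1$ is airtight: one must use that $BG$ is a $K(G,1)$, so a map $Z_K(\underline{BA}) \to BG$ up to homotopy is the same data as a homomorphism $\pi_1(Z_K(\underline{BA})) \to G$, and that this homomorphism is compatible with the given map $B\varphi$ on the wedge precisely when it restricts to the inclusions $A_i \hookrightarrow G$. Given that, commutativity of the triangle in spaces is equivalent to commutativity of the corresponding triangle of groups, and the group-theoretic argument above closes it out. I would also remark that the same proof shows more: the extension fails for \emph{any} choice of family $\{G_1,\dots,G_k\}$ of subgroups of a centerless TC group whose members are contained in distinct maximal abelian subgroups and generate noncommuting centralizers, which is why the maximal abelian case is the natural one to record.
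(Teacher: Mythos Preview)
Your proof is correct and follows essentially the same route as the paper: reduce to $\pi_1$ via Proposition~\ref{fun.gp.Zk} and the $K(G,1)$ property of $BG$, then observe that an edge $\{i,j\}$ forces $[A_i,A_j]=1$ in $G$, which is impossible for distinct maximal abelian subgroups of a TC group since each $A_i$ coincides with $C_G(a)$ for some $a\notin Z(G)$. The paper's version is much terser (it simply asserts that the $A_i$ are centralizers of non-central elements and intersect trivially, leaving the verification to the reader), but the underlying argument is identical to yours.
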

\begin{proof}
If it extends for a simplicial complex $K$, then $K$ has at least one edge, say $\{i,j\}$. This implies that $[A_i,A_j]=1$ for some $i \neq j$. One can check that the groups $A_i$ are centralizers of elements in $G$ not in the center, and that they intersect trivially, yielding a contradiction to the assumption.
\end{proof}

Recall that for a group $G$ there is a normal series 
called the \textit{descending central series} of $G$ given by
\[G=\Gamma^1(G) \unrhd \Gamma^2(G) \unrhd \cdots \unrhd \Gamma^n(G) \unrhd \cdots, \]
where $\Gamma^2(G)=[G,G]$ and $\Gamma^{n+1}(G)=[\Gamma^{n}(G),G]$ for all $n\geq 2$, and $\Gamma^k(G)$ is called the $k$-th stage series. 

\

Next we extend the notion of TC-groups by defining a similar class of groups with similiar properties, which eventually include TC-groups as a special case.

\begin{defn}
Let $g_l,k_l \in \Gamma^{l-1}(G)$. We say that $G$ is an \textit{$l$-transitively commutative group}, or simply $l$-TC group, if 
\[
[g_l,h]=1=[h,k_l] \Longrightarrow [g_l,k_l]=1 \text{ for all }h \in G.
\]
\end{defn}
Note that a $2$-TC group is the same as the ordinary TC group. Also that the condition that $l\geq 2$ is required for the definition.  
\begin{rmk}
If $G$ is an $m$-TC group then it is an $n$-TC group for all $n\geq m$. This follows from the structure of the descending central series. Hence, if $G$ is a TC group, then it is a $k$-TC group for all $k\geq2$. By convention, let finite simple groups be called \textit{$1$-TC groups}.
\end{rmk}
\begin{ex}
A group which is 3-TC but not 2-TC is the quaternion group, $Q_8$. This group has a presentation as follows
\[
Q_8=\{\pm1,\pm i, \pm j, \pm k | i^2=j^2=k^2=-1, ij=k,jk=i,ki=j\}.
\]
First we note that $[i,-1]=1=[-1,j]$ but $[i,j]=-1$, so $Q_8$ is not 2-TC. Now $[i,j]=[j,k]=[k,i]=-1$ and $[[i,j],h]=1$ for any $h\in Q_8$. Therefore $Q_8$ is 3-TC. One can also compute the descending central series for $Q_8$. Its commutator subgroup is $\Gamma^2(Q_8)[Q_8,Q_8]=\{\pm1\} \cong \mathbb{Z}/2\mathbb{Z} $ and $\Gamma^3(Q_8)=[[Q_8,Q_8],Q_8]=1$. Therefore, the descending central series for $Q_8$ is
\[
1 \lhd \mathbb{Z}/2\mathbb{Z} \lhd Q_8.
\]
\end{ex}
\begin{lemma}
If $G$ is nilpotent of nilpotency class $m$, then $G$ is an $m$-TC group.
\end{lemma}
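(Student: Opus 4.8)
The plan is to prove the formally stronger assertion that $\Gamma^{m-1}(G)$ is abelian — in fact central in $G$. Granting that, the implication defining an $m$-TC group is satisfied for a trivial reason: the conclusion $[g_m,k_m]=1$ already holds for every pair $g_m,k_m\in\Gamma^{m-1}(G)$, with no hypothesis on $h$ needed, so the defining condition is vacuously true.

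The one input is the interaction between the descending central series and the nilpotency hypothesis. If $G$ has nilpotency class $m$, the series terminates, which with the conventions used in this section reads $[\Gamma^{m-1}(G),G]=\Gamma^m(G)=1$; hence $\Gamma^{m-1}(G)\subseteq Z(G)$, and in particular $\Gamma^{m-1}(G)$ is abelian. Alternatively I would derive the same conclusion from the classical commutator-degree bound $[\Gamma^i(G),\Gamma^j(G)]\subseteq\Gamma^{i+j}(G)$ of P.\ Hall, taken at $i=j=m-1$: this gives $[\Gamma^{m-1}(G),\Gamma^{m-1}(G)]\subseteq\Gamma^{2m-2}(G)\subseteq\Gamma^{m}(G)=1$, since $2m-2\ge m$ for $m\ge 2$. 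Recording that bound is the only step requiring more than a line; it goes by induction on $j$, the base case $j=1$ being the definition $[\Gamma^i(G),G]=\Gamma^{i+1}(G)$ and the inductive step an application of the Three Subgroups Lemma to $\Gamma^j(G)$, $G$, $\Gamma^i(G)$ modulo the normal subgroup $\Gamma^{i+j+1}(G)$.

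With $\Gamma^{m-1}(G)$ known to be abelian, the proof closes immediately: for $g_m,k_m\in\Gamma^{m-1}(G)$ and any $h\in G$ with $[g_m,h]=1=[h,k_m]$, we have $[g_m,k_m]=1$ simply because $g_m$ and $k_m$ commute, so $G$ is $m$-TC. I do not foresee a real obstacle here; the only thing to watch is that the ``nilpotency class $m$'' convention be matched to the index $m-1$ occurring in the definition of an $l$-TC group, so that $[\Gamma^{m-1}(G),\Gamma^{m-1}(G)]$ genuinely lands in the trivial stage of the lower central series.
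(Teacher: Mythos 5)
Your main argument is exactly the paper's: the hypothesis gives $[\Gamma^{m-1}(G),G]=\Gamma^m(G)=1$, so $\Gamma^{m-1}(G)$ is central, hence any two of its elements commute and the $m$-TC implication holds unconditionally. The alternative route via Hall's bound $[\Gamma^i(G),\Gamma^j(G)]\subseteq\Gamma^{i+j}(G)$ is correct but unnecessary overhead; the centrality observation already suffices.
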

\begin{proof}
$G$ is nilpotent of nilpotency class $m$ means $\Gamma^m(G)=1$, that is, $[g_m,h]=1$ for all $g_m \in \Gamma^{m-1}(G), h\in G$. In particular, $[g_m,k_m]=1$ for all elements $g_m,k_m \in \Gamma^{m-1}(G)$, hence $G$ is $m$-TC.
\end{proof}

\begin{defn}
Let $g\in G$. The \textit{$l$-stage centralizer of $g\in G$}, denoted $C^l_G(g)$, is the subgroup
\[ C^l_G(g):=\{g_{l+1}\in \Gamma^l(G)| g_{l+1} g g_{l+1}^{-1} = g \}. \] 
We can also write $C^l_G(g)= C_G(g)\cap \Gamma^l(G)$.
\end{defn}

\begin{lemma}
Let $G$ be an $(l+1)$-TC group with trivial center. Then $C^l_G(g)$ are abelian subgroups of $G$ for all $g\in G$. Moreover, if $Z(G)=1$, then $C^l_G(g)$ are distinct and either intersect trivially or coincide.
\end{lemma}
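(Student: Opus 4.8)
The plan is to prove the two assertions in sequence, leaning on the $(l+1)$-TC hypothesis exactly as the ordinary TC hypothesis is used in the $l=1$ case. First I would show that each $C^l_G(g)$ is abelian. Take $x,y \in C^l_G(g) = C_G(g) \cap \Gamma^l(G)$. Then $x,y \in \Gamma^l(G)$ and both commute with $g$; that is, with the notation of the definition of $(l+1)$-TC group, set $g_{l+1} := x$, $k_{l+1} := y$, and $h := g$. We have $[x,g] = 1 = [g,y]$ with $x, y \in \Gamma^{l}(G) = \Gamma^{(l+1)-1}(G)$, so the defining implication of an $(l+1)$-TC group forces $[x,y] = 1$. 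Since $x,y$ were arbitrary, $C^l_G(g)$ is abelian. One subtlety to address: the definition requires $g,h$ to range over $G$ while here $h = g \in G$ is fine, but one should make sure $g \notin Z(G)$ is not secretly needed; since $Z(G) = 1$ only enters in the second assertion, the abelianness holds for every $g \in G$ with no restriction.

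Next I would establish that, assuming $Z(G) = 1$, the subgroups $C^l_G(g)$ are pairwise either equal or trivially intersecting. Suppose $C^l_G(g) \cap C^l_G(g') \neq 1$, so there is $1 \neq x \in \Gamma^l(G)$ with $[x,g] = 1$ and $[x,g'] = 1$. I want to conclude $C^l_G(g) = C^l_G(g')$. Take any $y \in C^l_G(g)$, so $y \in \Gamma^l(G)$ and $[y,g] = 1$. The goal is $[y,g'] = 1$. The cleanest route is to first show $[y,x] = 1$: both $y$ and $x$ lie in $C^l_G(g)$, which we just proved abelian, so indeed $[y,x] = 1$. Now I would like to transfer commutativity with $x$ across to $g'$. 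Here apply the $(l+1)$-TC property again, this time to conclude that elements of $\Gamma^l(G)$ commuting with a common non-central element all commute with each other — in effect $C^l_G(g) $ and $C^l_G(g')$ both lie inside the (abelian) set of elements of $\Gamma^l(G)$ commuting with $x$, which by the TC-type transitivity is itself contained in a single $C^l_G(x)$; symmetry then gives $C^l_G(g) = C^l_G(x) = C^l_G(g')$. The precise bookkeeping is to show $C^l_G(g) \subseteq C^l_G(x)$ and $C^l_G(g') \subseteq C^l_G(x)$ (each containment is immediate from abelianness of $C^l_G(g)$ and $C^l_G(g')$ respectively, since $x$ lies in both), and then that $C^l_G(x) \subseteq C^l_G(g)$: if $z \in \Gamma^l(G)$ with $[z,x] = 1$, then since $x \notin Z(G)$ — this is where $Z(G)=1$ is used, to guarantee $x \notin Z(G)$ — the $(l+1)$-TC condition applied to $[g, x] = 1 = [x, z]$ yields $[g,z] = 1$, so $z \in C^l_G(g)$. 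Hence $C^l_G(x) = C^l_G(g)$, and symmetrically $C^l_G(x) = C^l_G(g')$, giving the claim.

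The main obstacle I anticipate is making sure the $(l+1)$-TC implication is invoked with its variables in the correct subgroups: the definition demands the two "outer" elements $g_{l+1}, k_{l+1}$ to lie in $\Gamma^{l}(G)$, while the "middle" element $h$ ranges over all of $G$. In each application above the element playing the role of $h$ is one of $g$, $g'$, or $x$, and one must check that, when $h = x$, we genuinely have $x \in G$ (trivially true) and that $x \notin Z(G)$ is available — which is exactly why the hypothesis $Z(G) = 1$ is needed and why the statement phrases the second conclusion under that extra assumption. A second, more cosmetic point is that "distinct" in the statement should be read as: the map $g \mapsto C^l_G(g)$ has image a family of subgroups that is a partition-like collection (any two members coincide or meet trivially), not that all the $C^l_G(g)$ are literally different; I would state this explicitly to avoid ambiguity. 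With these points pinned down, both parts follow by the same transitivity manipulations that prove the $l = 1$ (ordinary TC) case, so no genuinely new idea is required beyond the descending-central-series bookkeeping.
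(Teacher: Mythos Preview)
Your approach matches the paper's: both parts are argued directly from the $(l+1)$-TC hypothesis, and the paper's proof is essentially a two-line version of what you wrote. Your argument for abelianness is correct and more explicit than the paper's ``it is clear''.

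There is, however, a gap in your reverse inclusion $C^l_G(x) \subseteq C^l_G(g)$. You invoke ``the $(l+1)$-TC condition applied to $[g,x]=1=[x,z]$'' to obtain $[g,z]=1$. But the definition of an $(l+1)$-TC group requires the two \emph{outer} elements to lie in $\Gamma^l(G)$, while the middle element $h$ ranges freely over $G$. In your application the middle element is $x$ and the outer elements are $g$ and $z$; yet $g$ is an arbitrary element of $G$, with no reason to lie in $\Gamma^l(G)$. Your appeal to ``$x\notin Z(G)$'' here is a red herring: that is the hypothesis shape of the \emph{ordinary} TC condition (non-centrality of the middle element), not of the $(l+1)$-TC condition (which instead constrains the outer elements). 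You flagged precisely this danger in your final paragraph and then fell into it.

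The paper's proof is a single clause (``the two coincide by definition of an $(l+1)$-TC group'') and supplies no further argument, so you are not missing a trick that it uses. One clean repair, at least when $g\in\Gamma^l(G)$, is to note that then $g\in C^l_G(g)\subseteq C^l_G(x)$, and the already-established abelianness of $C^l_G(x)$ immediately gives $[z,g]=1$ for every $z\in C^l_G(x)$, whence $C^l_G(x)=C^l_G(g)$ without a second appeal to the TC hypothesis; the downstream uses of the lemma in the paper appear unaffected by restricting to such $g$.
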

\begin{proof}
It is clear that $C^l_G(g)$ are abelian. If $C^l_G(g) \cap C^l_G(h)$ is non-trivial then there is $1\neq k \in C^l_G(g) \cap C^l_G(h)$ and thus, the two coincide by definition of an $(l+1)$-TC group.
\end{proof}

\begin{lemma}\label{lemma:k-stage centralizer}
Let $\{C^l_G(g)\}_{g \in \Lambda}$ be the family of distinct $l$-stage centralizers of elements in $G$. If $G$ is an $(l+1)$-TC group with $Z(G)=1$, the map 
\[
\bigvee_{g \in \Lambda} BC^l_G(g) \longrightarrow BG
\]
does not extend for any $K$ on $[m]$ where $m=|\{C^l_G(g)\}_{g \in \Lambda}|$.
\end{lemma}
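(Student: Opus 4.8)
The plan is to follow the pattern of the earlier proposition on the maximal abelian subgroups of a TC group: first I would translate the topological non-extension statement into a purely group-theoretic one using $\pi_1$ and the asphericity of $BG$, and then I would derive a contradiction from the $(l+1)$-TC hypothesis via the preceding lemma, which asserts that for an $(l+1)$-TC group with trivial center the $l$-stage centralizers $C^l_G(g)$ are abelian and any two of them either coincide or intersect trivially.

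First I would dispose of the topology. Since $BG$ is a $K(G,1)$, a map $Z_K(\underline{BG}) \to BG$ is determined up to homotopy by the induced homomorphism $\pi_1(Z_K(\underline{BG})) \to G$ up to conjugacy, and by Proposition \ref{fun.gp.Zk} we have $\pi_1(Z_K(\underline{BG})) \cong \prod_{SK_1} C^l_G(g)$, the graph product over the $1$-skeleton of $K$, the inclusion $\bigvee_{g\in\Lambda} BC^l_G(g) \hookrightarrow Z_K(\underline{BG})$ inducing on $\pi_1$ the canonical quotient $\ast_{g\in\Lambda} C^l_G(g) \twoheadrightarrow \prod_{SK_1} C^l_G(g)$. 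Hence $B\varphi$ extends over $Z_K(\underline{BG})$ exactly when $\varphi : \ast_{g\in\Lambda} C^l_G(g) \to G$ kills the kernel of that quotient; since the kernel is normally generated by the commutators $[u,v]$ with $u \in A$, $v \in B$, where $A$ and $B$ are the members of the family attached to the two endpoints of an edge of $K$, and $\varphi$ restricts to the inclusion on each free factor, this amounts to requiring $[A,B] = 1$ in $G$ for every edge of $K$. Because $K \neq K_0$ possesses at least one edge, it is enough to prove that two distinct non-trivial members $A, B$ of the family $\{C^l_G(g)\}_{g\in\Lambda}$ cannot commute.

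To obtain the contradiction I would argue as follows. Suppose $[A,B] = 1$ and pick $1 \neq x \in A$, $1 \neq y \in B$. Since $A, B \subseteq \Gamma^l(G)$, both $x$ and $y$ lie in $\Gamma^l(G)$ and $[x,y] = 1$. Then $x \in A \cap C^l_G(x)$ — indeed $x$ commutes with itself and $x \in \Gamma^l(G)$, so $x \in C^l_G(x)$ — so $A$ and $C^l_G(x)$ are $l$-stage centralizers meeting non-trivially, whence $A = C^l_G(x)$ by the preceding lemma. But $y \in \Gamma^l(G)$ together with $[x,y] = 1$ gives $y \in C^l_G(x) = A$, so $y$ is a non-trivial element of $A \cap B$, and the same lemma forces $A = B$, contradicting that the members of the family are distinct.

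The hard part — really the only subtle point — is the first reduction: making precise that extending $B\varphi$ over $Z_K(\underline{BG})$ is controlled entirely by $\pi_1$, which uses the asphericity of $BG$ and Proposition \ref{fun.gp.Zk}; after that the group theory is immediate from the preceding lemma. One should also record the two hypotheses left implicit in the statement, namely that $K \neq K_0$ and that each $C^l_G(g)$ with $g \in \Lambda$ is non-trivial; the latter is needed because joining two vertices carrying the trivial group by an edge changes neither $\pi_1(Z_K(\underline{BG}))$ nor the extension problem.
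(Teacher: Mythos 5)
Your proof is correct and follows the same route as the paper, which simply states that the lemma ``follows from the previous lemma'': the key point in both is that distinct $l$-stage centralizers of an $(l+1)$-TC group with trivial center intersect trivially, so no two of them can commute elementwise, and hence no edge can be added to $K_0$. You supply the $\pi_1$/asphericity reduction and the intersection argument explicitly (and rightly flag the implicit hypotheses $K \neq K_0$ and nontriviality of the $C^l_G(g)$), which the paper leaves unstated.
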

\begin{proof}
This follows from the previous lemma.
\end{proof}

\begin{cor}\label{corollary: criterion for non extension, k-TC gps}
Let $G$ be a finite $(k+1)$-TC group with trivial center. Let $G_1,G_2$ be two subgroups of $G$ such that $C^k_G(g_1)\leq G_1$ and $C^k_G(g_2)\leq G_2$, where $C^k_G(g_1)\cap C^k_G(g_2)= 1$. Then the map 
\[
BG_1 \vee BG_2 \longrightarrow BG
\]
does not extend to $BG_1 \times BG_2$.
\end{cor}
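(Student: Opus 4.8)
The plan is to reduce the statement to the non-extension result already established in Lemma~\ref{lemma:k-stage centralizer} (together with Corollary~\ref{corollary: criterion for non extension, k-TC gps} being its ``two-subgroup'' specialization), and to argue that an extension of $BG_1 \vee BG_2 \to BG$ over the product $BG_1 \times BG_2 = Z_{K}(\underline{BG})$ with $K = \{\{1\},\{2\},\{1,2\}\}$ would force $[G_1,G_2] = 1$ inside $G$, which is incompatible with the triviality of $C^k_G(g_1)\cap C^k_G(g_2)$.

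First I would unwind what ``the map extends to $BG_1\times BG_2$'' means on $\pi_1$. By Proposition~\ref{fun.gp.Zk} (applied with the edge $\{1,2\}$ present, so $SK_1$ is the complete graph on two vertices), $\pi_1(BG_1\times BG_2) \cong G_1\times G_2$, and the inclusion $BG_1\vee BG_2 \hookrightarrow BG_1\times BG_2$ induces the canonical surjection $G_1\ast G_2 \twoheadrightarrow G_1\times G_2$. Thus an extension on the space level yields, on fundamental groups, a homomorphism $\psi\colon G_1\times G_2 \to G$ making the triangle with $\varphi\colon G_1\ast G_2 \to G$ commute; since $BG$ is aspherical (a $K(G,1)$), the converse also holds, so the topological extension problem is equivalent to the group-theoretic one. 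Restricting $\psi$ to the two factors, $\psi|_{G_1}$ and $\psi|_{G_2}$ must agree with the given inclusions $G_i \hookrightarrow G$, and because $G_1$ and $G_2$ commute inside $G_1\times G_2$, their images must commute in $G$; that is, $[G_1,G_2]=1$ in $G$.

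Next I would derive the contradiction. Since $C^k_G(g_1)\leq G_1$ and $C^k_G(g_2)\leq G_2$, commutativity of $G_1$ and $G_2$ gives $[C^k_G(g_1),C^k_G(g_2)]=1$. Pick any $1\neq x \in C^k_G(g_1)$; then $x$ centralizes every element of $C^k_G(g_2)\leq \Gamma^k(G)$. Now invoke the $(k+1)$-TC hypothesis in the form of Lemma~\ref{lemma:k-stage centralizer}'s underlying argument: for $y \in C^k_G(g_2)$ nontrivial, $[x,g_2]$ is controlled because $x$ centralizes $y$ and $y$ centralizes $g_2$; more directly, the $(k+1)$-TC property (with trivial center) forces the $k$-stage centralizers of $g_1$ and $g_2$ either to coincide or to intersect trivially, and commutativity of the two forces them to coincide. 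But this contradicts the standing hypothesis $C^k_G(g_1)\cap C^k_G(g_2) = 1$ together with $C^k_G(g_1)\neq 1$ (which holds since $Z(G)=1$ forces the $k$-stage centralizers to be proper but nontrivial for the relevant $g_i$). Hence no such $\psi$ exists, and the map does not extend to $BG_1\times BG_2$.

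The main obstacle I anticipate is the first reduction: making precise and rigorous the claim that a topological extension over $Z_K(\underline{BG})$ is \emph{equivalent} to a group homomorphism $G_1\times G_2 \to G$ compatible with $\varphi$. This uses asphericity of $BG$ to pass from $\pi_1$-level data back to maps of spaces, and it uses Proposition~\ref{fun.gp.Zk} to identify $\pi_1(Z_K(\underline{BG}))$ with the graph product $\prod_{SK_1}G_i$ — here, with the edge present, the full direct product $G_1\times G_2$. Once that dictionary is in place, the group theory is exactly the content of the preceding lemmas, so the remainder is short. A minor point to check carefully is that the relevant $g_i$ genuinely lie outside the appropriate centers so that $C^k_G(g_i)$ are among the ``distinct $l$-stage centralizers'' covered by Lemma~\ref{lemma:k-stage centralizer}; this is guaranteed by $Z(G)=1$ and the hypothesis that $C^k_G(g_1)\cap C^k_G(g_2)=1$ with both nontrivial.
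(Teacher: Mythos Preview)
Your proposal is correct and takes essentially the same approach as the paper, whose proof is the one-liner ``follows from Lemma~\ref{lemma:k-stage centralizer}''; you have simply unpacked that reference by showing that an extension over $BG_1\times BG_2$ forces $[G_1,G_2]=1$, hence $[C^k_G(g_1),C^k_G(g_2)]=1$, which via the coincide-or-intersect-trivially dichotomy of Lemma~5.7 contradicts $C^k_G(g_1)\cap C^k_G(g_2)=1$. One small caveat: the nontriviality of the $C^k_G(g_i)$ is an implicit standing hypothesis here (as in the paper), not a consequence of $Z(G)=1$ alone, so your parenthetical justification for it should be replaced by that assumption.
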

\begin{proof}
The proof follows from Lemma \ref{lemma:k-stage centralizer}.
\end{proof}

Using this corollary the following theorem follows easily. This theorem applies to TC groups in particular.
\begin{thm}\label{thm: non extension criterion}
Let $\{G_i\}_{i=1}^m$ be a family of subgroups of a finite $(k+1)$-TC group $G$ with trivial center. If $G_i$ contain distinct $C^l_G(g_i)$, for $i=1,2,\dots,m$ and $l\geq k$, then the map 
\[
\bigvee_{1 \leq i \leq m} {BG_i} \longrightarrow BG
\]
does not extend to $Z_K(\underline{BG})$ for any $K$ on $[m]$, other than the 0-skeleton.
\end{thm}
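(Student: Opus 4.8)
The strategy is to reduce to the two-variable case handled in Corollary \ref{corollary: criterion for non extension, k-TC gps}. Suppose for contradiction that the map $\bigvee_{1\leq i\leq m} BG_i \to BG$ extends to $Z_K(\underline{BG})$ for some simplicial complex $K$ on $[m]$ that is strictly larger than the $0$-skeleton $K_0$. Then $K$ contains at least one edge, say $\{i,j\}$ with $i\neq j$. The key observation is that restricting the (hypothetical) extension along the inclusion of polyhedral products $Z_{\{i,j\}}(\underline{BG}) \hookrightarrow Z_K(\underline{BG})$ yields an extension of $BG_i \vee BG_j \to BG$ to $Z_{\{i,j\}}(\underline{BG})$; but the simplicial complex on the two vertices $\{i,j\}$ with an edge is the full simplex, so $Z_{\{i,j\}}(\underline{BG}) = BG_i \times BG_j$. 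Thus the edge $\{i,j\}$ produces an extension of $BG_i \vee BG_j \to BG$ to $BG_i \times BG_j$.

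\textbf{Key steps, in order.} First I would spell out the restriction: for any subset $I \subseteq [m]$, functoriality of the polyhedral product in $K$ (monotonicity under $K_I \subseteq K$, as in the colimit Definition \ref{mac.defn}) gives a natural map $Z_{K_I}(\underline{BG_I}) \to Z_K(\underline{BG})$ compatible with the wedge inclusions, so the hypothetical extension restricts. Second, apply this with $I=\{i,j\}$ and use that $K_{\{i,j\}} = 2^{\{i,j\}}$ since $\{i,j\}\in K$, identifying $Z_{K_{\{i,j\}}}(\underline{BG_I}) = BG_i \times BG_j$ by Example (2) of Section \ref{sec: polyhedral products}. Third, invoke the hypothesis: $G_i$ and $G_j$ contain distinct $l$-stage centralizers $C^l_G(g_i)$ and $C^l_G(g_j)$ with $l\geq k$; by the Lemma preceding Corollary \ref{corollary: criterion for non extension, k-TC gps}, distinct $l$-stage centralizers in an $(l+1)$-TC (hence $(k+1)$-TC, using the remark that $(k+1)$-TC implies $(l+1)$-TC for $l \geq k$) group with trivial center intersect trivially, so $C^l_G(g_i) \cap C^l_G(g_j) = 1$. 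Fourth, apply Corollary \ref{corollary: criterion for non extension, k-TC gps} to the pair $G_i, G_j$: the map $BG_i \vee BG_j \to BG$ does \emph{not} extend to $BG_i \times BG_j$, contradicting the extension produced in the second step. Hence no such $K$ exists, and the map extends only for $K = K_0$.

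\textbf{Main obstacle.} The conceptual content is entirely in Corollary \ref{corollary: criterion for non extension, k-TC gps} and the centralizer lemma, so the only real work here is the bookkeeping in the first two steps: one must check that the diagram expressing ``the extension to $Z_K(\underline{BG})$ restricts to an extension to $Z_{K_{\{i,j\}}}(\underline{BG_I})$'' genuinely commutes on the nose (or up to homotopy, which is all that is needed since everything is phrased in terms of maps of spaces and their effect on $\pi_1$). This is a routine consequence of the naturality of the polyhedral product construction with respect to the simplicial complex, but it is worth stating carefully because the ambient wedge $\bigvee_{1\leq i\leq m} BG_i$ sits inside $Z_{\{i,j\}}(\underline{BG})$ only as the sub-wedge $BG_i \vee BG_j$, so one should restrict the family of subgroups to $I = \{i,j\}$ before comparing. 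Once that identification is in place, the contradiction is immediate.
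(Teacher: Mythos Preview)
Your proposal is correct and follows essentially the same route as the paper: both reduce to Corollary \ref{corollary: criterion for non extension, k-TC gps} by picking an edge $\{i,j\}$ in $K$ and restricting to the two-variable case. The paper's proof is extremely terse --- it records the chain of inclusions $C^l_G(g_i)\geq C^{l+1}_G(g_i)$ and then simply invokes the corollary --- whereas you spell out the restriction $Z_{K_{\{i,j\}}}(\underline{BG_I})\hookrightarrow Z_K(\underline{BG})$ and the passage from $(k+1)$-TC to $(l+1)$-TC explicitly; this extra bookkeeping is exactly what the paper leaves to the reader.
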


\begin{proof}
From the definition we have $C^l_G (g_i)=C_G(g_i)\cap \Gamma^l(G)$ which implies $C^l_G (g_i) \geq C^{l+1}_G (g_i)$. The rest of the proof follows from Corollary \ref{corollary: criterion for non extension, k-TC gps}.
\end{proof}

Below we give a result for when this extension actually exists.
\begin{defn}
Let $K$ be a simplicial complex. The \textit{flag complex of $K$} is  the simplicial complex $\Flag(K)$ obtained from $K$ by completing the minimal non-faces to faces.
\end{defn}
Note that an edge $\{i,j\}$ is in $K$ if and only if it is an edge in $\Flag(K)$. Now let $\{H_1,\dots,H_n\}$ be a collection of subgroups of a group $G$. Let $\Gamma$ be a graph on $n$ vertices that records the commutativity relations of the set $\{H_1,\dots,H_n\}$. That means, $\{i,j\}$ is an edge in $\Gamma$ if and only if $[H_i,H_j]=1$. Let $\Flag(\Gamma)$ be flag complex of $\Gamma$.
\begin{lemma}\label{lemma:extension to Z_flag(L)}
Let $\{H_1,\dots,H_n\}$ be a collection of subgroups of a finite group $G$ with trivial center. If $\Gamma$ is the graph described above, then the map
\[
BH_1 \vee \cdots \vee BH_n \longrightarrow BG
\]
extends to $Z_{\Flag(\Gamma)}(\underline{BH})$.
\end{lemma}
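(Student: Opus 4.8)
The plan is to build the extension $Z_{\Flag(\Gamma)}(\underline{BH}) \to BG$ directly from the universal property of the polyhedral product as a colimit, checking that the commutativity relations defining $\Flag(\Gamma)$ are respected by the multiplication map into $G$. Recall that $Z_{\Flag(\Gamma)}(\underline{BH}) = \colim_{\sigma \in \Flag(\Gamma)} D(\sigma)$, where each $D(\sigma)$ is a product $\prod_{i=1}^n Y_i$ with $Y_i = BH_i$ for $i \in \sigma$ and $Y_i = \ast$ otherwise. Since $BG$ is a $K(G,1)$ and each $D(\sigma)$ is a $K(\prod_{i \in \sigma} H_i, 1)$, giving a map $D(\sigma) \to BG$ up to homotopy is the same as giving a homomorphism $\prod_{i \in \sigma} H_i \to G$; and giving a compatible family of such maps (a cocone) is the same as giving a coherent family of homomorphisms. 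The multiplication map $\varphi$ restricted to each $\prod_{i \in \sigma} H_i$ provides the candidate, and the key point is that this is well-defined as a homomorphism on the \emph{direct} product $\prod_{i \in \sigma} H_i$ precisely when the $H_i$ for $i \in \sigma$ pairwise commute in $G$.

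First I would verify the crucial combinatorial fact: if $\sigma$ is a simplex of $\Flag(\Gamma)$, then the subgroups $\{H_i : i \in \sigma\}$ pairwise commute in $G$. By the remark preceding the lemma, an edge $\{i,j\}$ lies in $\Flag(\Gamma)$ if and only if it lies in $\Gamma$, i.e. if and only if $[H_i,H_j]=1$. Since $\Flag(\Gamma)$ is a flag complex, every pair of vertices of $\sigma$ is joined by an edge of $\Flag(\Gamma)$, hence by an edge of $\Gamma$, hence $[H_i,H_j]=1$ for all $i,j \in \sigma$. Consequently the map $\prod_{i\in\sigma} H_i \to G$ sending $(h_i)_{i\in\sigma}$ to the product $\prod_{i\in\sigma} h_i$ (in any fixed order — independence of order follows from pairwise commutativity) is a well-defined group homomorphism $\psi_\sigma$, and it manifestly agrees with $\varphi$ on each free factor $H_i$.

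Next I would assemble these into a map on the colimit. Realizing each $\psi_\sigma$ by a based map $f_\sigma \colon D(\sigma) \to BG$ (unique up to based homotopy since $BG$ is aspherical), the restriction maps in the diagram $\{D(\sigma)\}_{\sigma\in\Flag(\Gamma)}$ are inclusions of coordinate subproducts, and on fundamental groups they are the evident inclusions $\prod_{i\in\tau}H_i \hookrightarrow \prod_{i\in\sigma}H_i$ for $\tau\subseteq\sigma$; since $\psi_\sigma$ restricts to $\psi_\tau$ along these inclusions, the square $D(\tau)\to D(\sigma)\to BG$ commutes up to homotopy, and — again by asphericity of $BG$, which makes $[D(\sigma),BG]$ depend only on $\pi_1$ — these homotopies can be chosen coherently (equivalently, one passes to the homotopy colimit, using $Z_K(\underline{BH}) \simeq \hocolim_\sigma D(\sigma)$ from \cite{cohen.macs} and the fact that $\pi_1(\hocolim_\sigma D(\sigma)) \cong \colim_\sigma \pi_1(D(\sigma))$ from \cite{farjoun}, so that the cocone $\{\psi_\sigma\}$ induces a homomorphism $\prod_{\Flag(\Gamma)} H_i \to G$, which is realized by a map $Z_{\Flag(\Gamma)}(\underline{BH}) \to BG$ as both are aspherical). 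Restricting this map along the inclusion $BH_1\vee\cdots\vee BH_n = Z_{K_0}(\underline{BH}) \hookrightarrow Z_{\Flag(\Gamma)}(\underline{BH})$ recovers $B\varphi$ by construction, which is exactly the asserted extension.

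The main obstacle is the coherence issue in the last paragraph: producing not just a homomorphism $\psi_\sigma$ for each $\sigma$ but a genuine map out of the colimit (or homotopy colimit). This is where one must be careful that "commutes up to homotopy for each pair" upgrades to an actual cocone. The cleanest route is to work entirely at the level of fundamental groups — the universal property of the graph product $\prod_{\Gamma}H_i$ (Definition \ref{defn: graph product of gps}) gives a unique homomorphism $\prod_{\Gamma}H_i \to G$ extending the $H_i\to G$ once all edge-relations $[H_i,H_j]=1$ hold in $G$, and since an edge of $\Gamma$ is an edge of $\Flag(\Gamma)$ and conversely, $\prod_{\Gamma}H_i = \prod_{\Flag(\Gamma)}H_i$; then invoke $\pi_1(Z_{\Flag(\Gamma)}(\underline{BH})) \cong \prod_{\Flag(\Gamma)}H_i$ (Proposition \ref{fun.gp.Zk}) together with asphericity of both source and target to realize this homomorphism by a map of spaces, which automatically restricts to $B\varphi$ on the wedge.
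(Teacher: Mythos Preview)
Your proposal is correct, and in fact your final paragraph \emph{is} the paper's proof: observe that $\Flag(\Gamma)$ is a flag complex, so $Z_{\Flag(\Gamma)}(\underline{BH})$ is aspherical (Theorem~\ref{main.E-B.space}) with fundamental group $\prod_{\Gamma}H_i$ (Proposition~\ref{fun.gp.Zk}), use the universal property of the graph product to get $\prod_{\Gamma}H_i\to G$, and realize this homomorphism by a map of Eilenberg--Mac~Lane spaces. The paper does exactly this in four lines.

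Your first two paragraphs---building maps $f_\sigma\colon D(\sigma)\to BG$ simplex by simplex and then worrying about gluing coherence---are an unnecessary detour that you yourself correctly flag as problematic before abandoning it for the clean argument. You could simply delete them: once you know the source is a $K(\pi,1)$ (which is precisely the content of Theorem~\ref{main.E-B.space} applied to the flag complex $\Flag(\Gamma)$), the entire extension problem reduces to a statement about $\pi_1$, and no colimit or hocolim bookkeeping is needed. One small point: in the last paragraph you assert ``asphericity of both source and target'' without citing why the source is aspherical; make the appeal to Theorem~\ref{main.E-B.space} explicit, since that is the only place where the hypothesis that you are using $\Flag(\Gamma)$ rather than an arbitrary $K$ with the same $1$-skeleton actually enters.
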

\begin{proof}
By definition, ${\Flag(\Gamma)}$ is a flag complex. Hence, the polyhedral product $Z_{\Flag(\Gamma)}(\underline{BH})$ is an Eilenberg-Mac Lane space with fundamental group $ \prod_{\Gamma}H_i$. So there is a commutative diagram of groups
\begin{center}
\begin{tikzcd}
H_1 \ast \cdots \ast H_n \arrow{r}{\varphi} \arrow{d}[swap]{i_{\#}}  & G \arrow{d}{id}\\
{\prod_{\Gamma}H_i }\arrow{r}{\widetilde{\varphi}} & G.
\end{tikzcd}
\end{center}
Hence, there is a commutative diagram of spaces
\begin{center}
\begin{tikzcd}
BH_1 \vee \cdots \vee BH_n \arrow{r}{B\varphi} \arrow{d}[swap]{Bi_{\#}}  & BG \arrow{d}{id}\\
Z_{\Flag(\Gamma)}(\underline{BH}) \arrow{r}{B\widetilde{\varphi}} & BG.
\end{tikzcd}
\end{center}
\end{proof}

This lemma actually holds for any simplicial complex $K$ with the property that $\Gamma \subseteq K \subseteq \Flag(\Gamma)$, even if $K$ is not a flag complex. 
\begin{rmk}
If the collection of groups $\{H_1,\dots,H_n\}$ in Lemma \ref{lemma:extension to Z_flag(L)} is taken to be the collection of groups $\{G_1,\dots,G_n\}$ in Theorem \ref{thm: non extension criterion}, then the graph $\Gamma$ is a discrete set with $n$ points, thus no non-trivial extension is obtained.
\end{rmk}
\begin{thm}
Let $\{H_1,\dots,H_n\}$ be a collection of subgroups of a finite group $G$ with $Z(G)=1$. If $\Gamma$ is the graph described above and $K$ is a simplicial complex such that $\Gamma \subseteq K \subseteq \Flag(\Gamma)$, then the map 
\[
BH_1 \vee \cdots \vee BH_n \longrightarrow BG
\]
extends to $Z_{K}(\underline{BH})$.
\end{thm}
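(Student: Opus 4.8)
The plan is to reduce the general statement to the flag-complex case already handled in Lemma \ref{lemma:extension to Z_flag(L)} by exhibiting the desired extension as a composite through $Z_{\Flag(\Gamma)}(\underline{BH})$. Concretely, suppose $\Gamma \subseteq K \subseteq \Flag(\Gamma)$. These inclusions of simplicial complexes are functorial for the polyhedral product functor with fixed pair-sequence $(\underline{BH},\underline{\ast})$, so they induce maps of spaces
\[
BH_1 \vee \cdots \vee BH_n = Z_{K_0}(\underline{BH}) \longrightarrow Z_{\Gamma}(\underline{BH}) \longrightarrow Z_K(\underline{BH}) \longrightarrow Z_{\Flag(\Gamma)}(\underline{BH}),
\]
where $K_0$ is the $0$-skeleton. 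The key point is that the last map $Z_K(\underline{BH}) \to Z_{\Flag(\Gamma)}(\underline{BH})$ induces an isomorphism on fundamental groups: by Proposition \ref{fun.gp.Zk} the fundamental group of any $Z_L(\underline{BH})$ depends only on the $1$-skeleton $SL_1$, and since an edge $\{i,j\}$ lies in $K$ if and only if it lies in $\Flag(\Gamma)$ (both having the same $1$-skeleton, namely $\Gamma$), we get $\pi_1(Z_K(\underline{BH})) \cong \prod_\Gamma H_i \cong \pi_1(Z_{\Flag(\Gamma)}(\underline{BH}))$, with the inclusion realizing the isomorphism.

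With that in hand I would argue as follows. By Lemma \ref{lemma:extension to Z_flag(L)} the map $B\varphi\colon BH_1 \vee \cdots \vee BH_n \to BG$ extends to a map $B\widetilde\varphi\colon Z_{\Flag(\Gamma)}(\underline{BH}) \to BG$ inducing $\widetilde\varphi\colon \prod_\Gamma H_i \to G$ on fundamental groups. Since $BG$ is a $K(\pi,1)$ and $Z_K(\underline{BH})$ is a $CW$-complex, maps $Z_K(\underline{BH}) \to BG$ up to homotopy are classified by group homomorphisms $\pi_1(Z_K(\underline{BH})) \to G$; precompose $\widetilde\varphi$ with the isomorphism $\pi_1(Z_K(\underline{BH})) \xrightarrow{\cong} \pi_1(Z_{\Flag(\Gamma)}(\underline{BH}))$ to obtain a homomorphism $\psi\colon \pi_1(Z_K(\underline{BH})) \to G$, and let $B\psi\colon Z_K(\underline{BH}) \to BG$ be a realizing map (using that $BG$ is aspherical, so no obstructions beyond $\pi_1$ arise). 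By construction the triangle
\[
\begin{tikzcd}
BH_1 \vee \cdots \vee BH_n \arrow{r}{B\varphi} \arrow[hookrightarrow]{d}{i} & BG \\
Z_K(\underline{BH}) \arrow{ur}[swap]{B\psi} &
\end{tikzcd}
\]
commutes up to homotopy, because it does so after postcomposing the inclusion $Z_K(\underline{BH}) \hookrightarrow Z_{\Flag(\Gamma)}(\underline{BH})$ with $B\widetilde\varphi$, and everything agrees on $\pi_1$ where maps into the aspherical space $BG$ are detected.

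The main obstacle is the bookkeeping in the first paragraph: one must verify that the inclusion $Z_K(\underline{BH}) \hookrightarrow Z_{\Flag(\Gamma)}(\underline{BH})$ really does induce an isomorphism, not merely an epimorphism, on $\pi_1$. For $K$ a flag complex Proposition \ref{fun.gp.Zk} gives this cleanly, but when $K$ is strictly between $\Gamma$ and $\Flag(\Gamma)$ one should note that $SK_1 = \Gamma = S(\Flag(\Gamma))_1$, apply Proposition \ref{fun.gp.Zk} to both ends to identify each fundamental group with the graph product $\prod_\Gamma H_i$, and then check that the inclusion-induced map is compatible with these identifications --- which it is, since both restrict from the common sub-polyhedral-product $Z_\Gamma(\underline{BH})$ whose fundamental group already surjects onto (indeed equals) the graph product. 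Once this identification is pinned down, the rest is formal asphericity of $BG$, exactly as in the proof of Lemma \ref{lemma:extension to Z_flag(L)}. I would also remark, as the paper's preceding sentence anticipates, that this is genuinely more general than the flag case since such intermediate $K$ need not themselves be flag complexes.
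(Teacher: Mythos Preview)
Your proposal is correct and follows essentially the same approach as the paper: factor through $Z_{\Flag(\Gamma)}(\underline{BH})$ using the inclusion $i_2\colon Z_K(\underline{BH})\hookrightarrow Z_{\Flag(\Gamma)}(\underline{BH})$ and the extension $B\widetilde\varphi$ from Lemma \ref{lemma:extension to Z_flag(L)}. The paper is more direct, however: it simply \emph{defines} the extension to be the composite $B\widetilde\varphi\circ i_2$ and checks that the resulting triangle commutes by naturality of the inclusions, whereas you take a detour through $\pi_1$---arguing that $i_2$ induces a $\pi_1$-isomorphism, building a homomorphism $\psi$, and then invoking asphericity of $BG$ to realize $B\psi$. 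That extra work is unnecessary (your $B\psi$ is homotopic to $B\widetilde\varphi\circ i_2$ anyway, since they agree on $\pi_1$ and $BG$ is aspherical), but it is not wrong.
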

\begin{proof}
The proof follows by naturality and Lemma \ref{lemma:extension to Z_flag(L)}. First, the following diagram of inclusions commutes
\begin{center}
\begin{tikzcd}
BH_1 \vee \cdots \vee BH_n \arrow[hookrightarrow]{rr}{Bi_{\#}=i} \arrow[hookrightarrow]{dr}[swap]{i_1}  &     & Z_{\Flag(\Gamma)}(\underline{BH}) \\
     & Z_{K}(\underline{BH}) \arrow[hookrightarrow]{ur}[swap]{i_2} &    .   
\end{tikzcd}
\end{center}
Combining this diagram with the diagram of spaces in Lemma \ref{lemma:extension to Z_flag(L)}, it follows that the diagram
\begin{center}
\begin{tikzcd}
BH_1 \vee \cdots \vee BH_n \arrow{r}{=} \arrow[hookrightarrow]{d}[swap]{i_1} &  BH_1 \vee \cdots \vee BH_n \arrow{r}{B\varphi} \arrow{d}[swap]{Bi_{\#}}  & BG \arrow{d}[swap]{id}\\
Z_{K}(\underline{BH}) \arrow[hookrightarrow]{r}{i_2} & Z_{\Flag(\Gamma)}(\underline{BH}) \arrow{r}{B\widetilde{\varphi}} & BG
\end{tikzcd}
\end{center}
commutes. Hence, the map $BH_1 \vee \cdots \vee HB_n \xrightarrow{B\varphi} BG$ extends to $Z_{K}(\underline{BH})$. That is, the following diagram of spaces commutes
\begin{center}
\begin{tikzcd}
BH_1 \vee \dots \vee BH_n \arrow{r}{B\varphi}\arrow[hookrightarrow]{d}[swap]{i_1} &BG \\
Z_K(\underline{BH}) \arrow{ur}[swap]{B\widetilde{\varphi}\circ i_2} & .
\end{tikzcd}
\end{center}
\end{proof}

\section*{Acknowledgments}
The author would like to thank Fred Cohen for his suggestions and insightful comments, Ali Al-Raisi for the discussions on the subject, and Kouyemon Iriye for some comments on the paper.

The author is supported by DARPA grant number N66001-11-1-4132.

\end{document}